\newcommand{\R}{\mathbb{R}}
\newcommand{\Rn}{\mathbb{R}^n}
\newcommand{\Z}{\mathbb{Z}}
\newcommand{\N}{\mathbb{N}}
\newcommand{\M}{\mathcal{M}}
\newcommand{\A}{\mathcal{A}}
\newcommand{\h}{\mathcal{H}}
\newcommand{\sym}{\mathcal{S}^n}
\newcommand{\tr}{\operatorname{tr}}
\newcommand{\osc}{\operatorname{osc\,}}
\newcommand{\vep}{\varepsilon}
\newcommand{\dive}{\operatorname{div}}
\definecolor{ffqqqq}{rgb}{1,0,0}
\definecolor{qqqqff}{rgb}{0,0,1}
\newtheorem{thm}{Theorem}[section]
\newtheorem{prop}[thm]{Proposition}
\newtheorem{cor}[thm]{Corollary}
\newtheorem{lem}[thm]{Lemma}
\theoremstyle{definition}
\newtheorem{defn}[thm]{Definition}
\newtheorem{rem}[thm]{Remark}
\numberwithin{equation}{section}
\author[D.~Kriventsov]{Dennis Kriventsov}
    \address{Dennis Kriventsov. Department of Mathematics\\
    Rutgers University\\
110 Frelinghuysen Rd., Piscataway, NJ 08854, USA}
    \email{dennis.kriventsov@rutgers.edu}
\author[M.~Soria-Carro]{Mar\'ia Soria-Carro}
    \address{Mar\'ia Soria-Carro. Department of Mathematics\\
    Rutgers University\\
110 Frelinghuysen Rd., Piscataway, NJ 08854, USA}
    \email{maria.soriacarro@rutgers.edu}
\keywords{Parabolic transmission problems, phase transitions, jump in conductivity, free boundary regularity, Harnack inequality, viscosity solutions.}
\subjclass[2020]{Primary: 35B65, 35R35. Secondary: 35K05.}
\thanks{This research is supported by NSF grant DMS-2247096.}
\begin{document}

\title[A parabolic free transmission problem]{A parabolic free transmission problem: flat free boundaries are smooth}

\begin{abstract}
We study a two-phase parabolic free boundary problem motivated by the jump of conductivity in composite materials that undergo a phase transition. Each phase is governed by a heat equation with distinct thermal conductivity, and a transmission-type condition is imposed on the free interface. We establish strong regularity properties of the free boundary: first, we prove that flat free boundaries are $C^{1,\alpha}$ by means of a linearization technique and compactness arguments. Then we use the Hodograph transform to achieve higher regularity.
To this end, we prove a new Harnack-type inequality and develop the Schauder theory for parabolic linear transmission problems.
\end{abstract}

\maketitle

\section{Introduction}

In this paper, we consider the following two-phase parabolic free boundary problem:
\begin{equation} \label{FBP}
\begin{cases}
a_+u_t - \Delta u = 0 & \hbox{in}~\{u>0\},\\
a_-u_t - \Delta u = 0 & \hbox{in}~\{u<0\},\\
\partial_\nu u^+ =\partial_\nu u^-& \hbox{on}~ \{u=0\},\\
\end{cases}
\end{equation}
for some constants $a_+, a_->0$ with $a_+\neq a_-$, where  $u^\pm= \max(\pm u, 0)$ and
$\partial_\nu u^\pm$ denote the normal derivatives of $u^\pm$ in the inward direction to $\{\pm u>0\}$. The set $F(u):=\{u=0\}$ is known as the \textit{free boundary}, and it is our main object of study.

This model arises formally when considering solutions to the quasilinear parabolic  equation
\begin{align} \label{modeleq}
& \ \partial_t c(u) - \Delta u = 0,\\
&c(s) = a_+ s^+ - a_- s^-.\label{cu}
\end{align} 
Equations like \eqref{modeleq} model several physical phenomena involving phase transitions, such as heat conduction through composite materials and fluid flow through porous media. 
Indeed, setting $v=c(u)$, \eqref{modeleq} and \eqref{cu} transform into the divergence form equation
\begin{equation} \label{eq:divform}
    v_t - \dive\big(A(v) \nabla v \big)=0,  \quad A(v) = a_+^{-1} \chi_{\{v>0\}} + a_-^{-1} \chi_{\{v\leq 0\}}.
\end{equation}
The function $v$ represents the temperature, and the model describes the evolution of heat through materials that undergo a phase change, resulting in a jump of conductivity across a free interface (i.e., the zero-level set of $v$). This type of behavior has been observed, for instance, in single-crystalline nanowires \cite{WLSXMZ}.
The mathematical theory in the elliptic case has been developed by several authors. For varying scalar coefficients, the model was studied in \cite{KLS}, and for matrix coefficients in \cite{AM, AT, ST}; see also \cite{LN, LV} for transmission problems where the jump in conductivity occurs across a fixed surface. 

Equation \eqref{modeleq} has also been considered for $c\in C(\R)\cap C^1(0,\infty)$ satisfying
\begin{equation} \label{epc}
c(s)= 0 \quad\hbox{if}~s\leq 0 \quad \hbox{and} \quad c'(s)>0\quad\hbox{if}~s>0.
\end{equation}
This equation describes the slow flow of an incompressible fluid in a partially saturated homogeneous porous medium. In this context, $u$ represents the pressure of the fluid, and the zero-level set splits the domain into two disjoint regions: the saturated part, where the equation becomes elliptic, and the unsaturated part, where it becomes parabolic. This model falls into the scope of elliptic-parabolic equations, with significant progress in  \cite{AL,BD,BH,DBG,HW,KP}.  It is worth noting that the function  $c(s)= a_+ s^+$ satisfies \eqref{epc}, so
 taking $a_-$ arbitrarily small in \eqref{cu}, our problem can be understood as an approximating model for describing these types of processes.

There is an extensive literature on phase transition problems involving caloric functions. These models can be classified according to the type of free boundary condition. Two of the most popular ones are the so-called Stefan problems and  Bernoulli problems. 

Stefan problems describe the melting or solidification of a material with a solid-liquid interphase, such as ice and water. Typically, the evolution of this interphase is described by the equation 
$V_\nu = |\nabla u^+| - |\nabla u^-|,$
where $V_\nu$ is the speed of propagation in the normal direction $\nu$ to the free boundary. These problems, introduced by Athanasopoulos, Caffarelli, and Salsa in the seminal papers \cite{ACS,ACS1,ACS2}, have been studied by many authors. See \cite{DSFS} for recent developments.

Parabolic Bernoulli problems appear, for instance, in combustion, and were originally derived as a singular limit from a model describing the propagation of laminar flames with high activation energy \cite{CV, CLW}. In this case, the interaction at the free front is given by
$|\nabla u^+|^2 - |\nabla u^-|^2=1.$
See also \cite{AW,D,W} for related works in this direction.

We point out that the free boundary condition given in \eqref{FBP}, equivalently expressed as
$$
|\nabla u^+|= |\nabla u^-|,
$$
differs qualitatively from the two previously discussed. Here, we prescribe a transmission-type condition that characterizes the flux balance across the zero-level set of the solution. In this case, the possible singularities arise due to the jump discontinuity of the global operator across the free interface.
Such problems are commonly referred to as \textit{free} transmission problems, and have received notable attention over the past decade,
particularly within the framework of elliptic equations \cite{AM,AT,CDSS,KLS,PS,PSw,ST}.
However, the literature concerning their parabolic counterparts is relatively limited.
The authors anticipate that the methods developed in this paper could be useful in studying other parabolic problems of the same nature. 

The analysis of free boundaries has been a central topic in PDEs over the past 40 years. The pioneer works of Caffarelli \cite{C1,C2}, for the two-phase elliptic Bernoulli problem, provide a well-established program to approach the free boundary regularity, consisting of two steps: I. Lipschitz free boundaries are $C^{1,\alpha}$; and II. Flat free boundaries are Lipschitz. These results rely on monotonicity formulas and Harnack principles; see also \cite{W1,W2,CFS} for related extensions.
De Silva \cite{DS} developed a different method to establish that \textit{flat free boundaries are $C^{1,\alpha}$}. Her techniques are very flexible and can be applied to many problems, including equations with variable coefficients and distributed sources.
 
In this paper, we consider viscosity solutions of \eqref{FBP}. 
This notion has been successfully used to study nonlinear elliptic and parabolic problems with the presence of a free boundary \cite{ACS,ACS1,ACS2,CSa,DSFS}.
The distinguishable feature of our model is that the heat equations governing each phase have distinct conductivity coefficients (i.e., $a_+\neq a_-$), forcing the Laplacian of $u$ to jump across this surface. This is in contrast to the parabolic Bellman equations, such as $u_t = \max(\Delta u/a_+, \Delta u /a_-)$, for which solutions are known to be $C^{2,\alpha}$ \cite{EL}.

About the optimal regularity of solutions, it is known that $u$ might not be Lipschitz \textit{in time.} This was proved by Caffarelli and Stefanelli  \cite{CS}, who produced a counterexample satisfying  \eqref{modeleq} and \eqref{cu}, with $a_+=1$ and $a_-=2$. It is natural to ask whether there are solutions that also fail to be Lipschitz \textit{in space}. For general coefficients $a_+$ and $a_-$, this problem seems fairly challenging due to the lack of symmetry and monotonicity formulas. 
Nonetheless, we were able to show that non-Lipschitz special solutions exist for $a_+=1$ and $a_-$ sufficiently small.
We give further details in Section~\ref{sec:counterexample}.

For divergence-form elliptic equations with different \textit{scalar} coefficients on each phase, Kim, Lee, and Shahgholian \cite{KLS} established the Lipschitz continuity of weak solutions by means of a new ACF-type monotonicity formula. However, their techniques do not seem to work for general matrix coefficients or even time-dependent equations like ours. In fact, it is not true in general that solutions must be Lipschitz in the case of matrix coefficients, and it is an open problem to characterize the set of matrices for which the result holds. For more details about this problem, see the papers by Andersson and Mikayelyan \cite{AM}, and Soave and Terracini \cite{ST}; see also a related work by Caffarelli, De Silva, and Savin \cite{CDSS}, who establish Lipschitz continuity for an anisotropic model in dimension 2.

It is worth noting here that for the elliptic free transmission context, the case of constant scalar coefficients (i.e., looking at steady-state solutions of \eqref{eq:divform}) is essentially trivial: performing the inverse change of variables $u = c^{-1}(v)$ results in a harmonic function $u$ with the same level set as $v$, so $F(v)$ has precisely the structure of a level set of a harmonic function. In the parabolic setting, this change of variables only passes between the two formulations \eqref{eq:divform} and \eqref{FBP}. Our analysis here suggests that these models are closer in difficulty to the matrix-coefficient elliptic problem, albeit with different structure.

Our main goal is to investigate the regularity of the free boundary $F(u)$ under the assumption that the graph of $u$ is \textit{flat} in some direction. Note that this is the expected behavior of $u$ at regular free boundary points, i.e., points where the transmission condition is satisfied in the classical sense. 
More precisely, we show that if the graph of $u$ is trapped between two close parallel planes, then the free boundary must be locally smooth. The proof is divided into two main steps. First, we prove that the free boundary is locally the graph of a $C^{1,\alpha}$ function (using the parabolic topology). This will be achieved via an improvement of flatness iteration procedure based on a Harnack-type inequality. Our methods are inspired by the work of De Silva \cite{DS} for the one-phase elliptic Bernoulli problem.

 Second, we show that $C^{1,\alpha}$ free boundaries are smooth in space and time. Thanks to the first step, we can apply the Hodograph transform to the function $u$ in a neighborhood of a free boundary point. This powerful tool, introduced by Kinderlehrer, Nirenberg, and Spruck \cite{KN, KNS} in the context of free boundary problems, allows us to write \eqref{FBP} as a nonlinear transmission problem in the hodograph variables. In particular, this transformation maps $F(u)$ to a portion of a hyperplane, and its regularity follows from the boundary Schauder estimates for solutions to these types of problems that we derive in Section~\ref{sec:transmission}.

The study of parabolic problems introduces new difficulties compared to their elliptic counterparts. On the one hand, constructing barriers to establish Harnack principles becomes notably challenging. For elliptic Bernoulli or free transmission problems, a typical approach involves applying the interior Harnack inequality in a small ball contained on one side and employing a suitable barrier to propagate this estimate across the free boundary. One could try a similar strategy for parabolic problems, but has to deal with the additional difficulty of propagating the barrier for future times.
Adapting the elliptic methods to this \textit{anisotropic} parabolic problem is one of the main novelties of this work.  

On the other hand, the parabolic topology presents some subtleties, especially to gain regularity of the free boundary in time. A general strategy to prove smoothness is to carry out a bootstrap argument by differentiating the equations and gaining one derivative at each step. This argument works if one has sufficient initial regularity.   
Starting from $C^{1,\alpha}$ is enough to gain regularity in space, but not in time, since we only know that the free boundary is  $C_t^{0,(1+\alpha)/2}$. We provide more details in Section~\ref{sec:hodograph}.

\medskip

Our main results are the following. 

\begin{thm}[Smoothness]\label{thm1}
Let  $u \in C(Q_1)$  be a viscosity solution of \eqref{FBP}.
There exists $\bar \delta >0$, depending on $n$, $a_+$, and $a_-$, such that if the graph of $u$ is $ \delta$-flat in the $e_n$-direction in $Q_1=B_1\times (-1,0]$, i.e.,
\begin{equation} \label{flatness}
\sup_{Q_1} |u(x,t) - x_n | \leq  \delta \quad \hbox{for some}~\delta \leq \bar \delta,
\end{equation}
 then $F(u)$ is locally smooth in space and time.
\end{thm}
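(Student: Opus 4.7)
My plan is to follow the classical two-step program in free boundary regularity, adapted to the anisotropic parabolic transmission setting: first show that $\delta$-flatness in \eqref{flatness} implies $F(u)$ is $C^{1,\alpha}$ (in the parabolic sense), then upgrade $C^{1,\alpha}$ to $C^\infty$ via a hodograph transform and Schauder bootstrap. For the first step, I would prove an improvement-of-flatness lemma: there exist constants $\bar\delta,\rho \in (0,1/2)$, depending only on $n, a_+, a_-$, such that if $|u(x,t) - x_n| \leq \delta$ in $Q_1$ with $\delta \leq \bar\delta$, then after a rotation to a unit vector $\nu$ with $|\nu - e_n| \leq C\delta$ one has $|u(x,t) - x\cdot\nu| \leq \tfrac{\delta\rho}{2}$ in $Q_\rho$. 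The proof is by contradiction and compactness: along a sequence of flatness scales $\delta_k \to 0$, the rescaled functions $\tilde u_k = (u_k - x_n)/\delta_k$ should converge to a limit $\tilde u_\infty$ solving the linear transmission problem
\begin{equation*}
a_\pm \partial_t \tilde u_\infty - \Delta \tilde u_\infty = 0 \ \hbox{in}\ \{\pm x_n > 0\}, \qquad \partial_{x_n}\tilde u_\infty^+ = \partial_{x_n}\tilde u_\infty^- \ \hbox{on}\ \{x_n = 0\}.
\end{equation*}
The Schauder estimates for this linear problem (developed in Section~\ref{sec:transmission}) yield a $C^{1,\alpha}$ approximation of $\tilde u_\infty$ by an affine function, which after unrolling contradicts the failure of the lemma. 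The key technical input for compactness is a Harnack-type inequality valid across the free boundary; this in turn rests on new two-sided barriers that respect the different conductivities $a_\pm$ and propagate through $F(u_k)$ for future times. Iterating the lemma at the proper parabolic scales produces a local graph representation $x_n = g(x',t)$ of $F(u)$ with $g \in C^{1,\alpha}_{x'} \cap C^{(1+\alpha)/2}_t$.

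For the second step, with $F(u)$ now a $C^{1,\alpha}$ graph, I would apply the partial hodograph transform of Kinderlehrer--Nirenberg--Spruck: introduce new coordinates $(x', y_n, t)$ with $y_n = u(x,t)$ on each phase and regard $x_n$ as a function of $(x', y_n, t)$, producing a new unknown $v$ on a cylindrical domain with fixed interface $\{y_n = 0\}$. This change of variables converts \eqref{FBP} into a fully nonlinear parabolic transmission problem for $v$, in which the original free boundary condition $\partial_\nu u^+ = \partial_\nu u^-$ becomes a first-order nonlinear transmission condition on $\{y_n = 0\}$ whose linearization falls in the class analyzed in Section~\ref{sec:transmission}. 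Because the hodograph change of variables is $C^{1,\alpha}$, tangential difference quotients of $v$ satisfy linear transmission equations with $C^\alpha$ coefficients and right-hand sides, and the boundary Schauder theory yields $v \in C^{2,\alpha}$; iterating tangentially then gives $C^{k,\alpha}$ spatial regularity for all $k$.

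The principal obstacle is gaining regularity in \emph{time}. The parabolic $C^{1,\alpha}$ output of Step~1 only gives $g \in C_t^{(1+\alpha)/2}$, strictly less than Lipschitz in $t$, so the naive bootstrap of differentiating the hodograph equations in $t$ is not available at the start. As foreshadowed in Section~\ref{sec:hodograph}, this is handled by a two-directional bootstrap: tangential differentiation preserves both the transmission equations and the transmission condition, and iterating Schauder improves only spatial regularity of $v$ without ever touching $t$. Once $v$ is smooth in $(x', y_n)$, the equations on each phase can be solved for $\partial_t v$ in terms of second spatial derivatives, trading one $t$-derivative for two spatial derivatives at each step. Alternating tangential and temporal bootstrap then pushes $g$ into $C^\infty$ in both variables and completes the proof.
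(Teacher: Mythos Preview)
Your overall strategy matches the paper's: Step~1 (improvement of flatness via Harnack and compactness, leading to a $C^{1,\alpha}$ free boundary) is essentially Theorem~\ref{thm2} and Section~\ref{sec:improvement}, and Step~2 (hodograph transform plus Schauder bootstrap) is Section~\ref{sec:hodograph}. The spatial tangential bootstrap in Step~2 is also correct and agrees with the paper.

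However, your treatment of the time regularity has a genuine gap. You assert that tangential differentiation in $y'$ eventually makes $v$ smooth in \emph{all} spatial variables $(x',y_n)$, after which $\partial_t v$ can be read off from the equation and the process iterated. But tangential bootstrap only yields $D_{y'}^m v$ piecewise $C^{2,\alpha}$ for every $m$; in particular it controls at most \emph{two} derivatives in $y_n$. To push to $D_{y_n}^3 v$ bounded up to $\{y_n=0\}$ one would differentiate the equation $a_\pm v_t = b_{ij}(\nabla v) D_{ij} v$ in $y_n$, and this produces the term $a_\pm D_{y_n} v_t$. Since the same equation shows $a_\pm D_{y_n} v_t - b_{nn} D_{y_n}^3 v$ equals a known quantity, boundedness of $D_{y_n}^3 v$ and of $D_{y_n} v_t$ are equivalent, and neither is available: the argument is circular. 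The normal and time directions are coupled through the quasilinear structure and cannot be decoupled by tangential information alone, so the ``trading one $t$-derivative for two spatial derivatives'' step never gets off the ground at the interface.

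The paper resolves this with a different device: instead of differentiating in $t$, it takes half-order backward incremental quotients $\partial_\tau^{1/2} h(y,t) = \tau^{-1/2}\big(h(y,t)-h(y,t-\tau)\big)$. After the first Schauder step $\nabla h$ is piecewise $C^{1,\alpha}$, hence so are the coefficients $a_{ij}=b_{ij}(\nabla h)$, and one checks that $\partial_\tau^{1/2} a_{ij}$ is piecewise $C^{0,\alpha}$ \emph{uniformly in $\tau$}. Then $\partial_\tau^{1/2} h$ solves a linear transmission problem with $C^{0,\alpha}$ coefficients and right-hand side, so Theorem~\ref{thm:bootstrap} gives it piecewise $C^{2,\alpha}$ uniformly in $\tau$. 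Iterating this (two half-steps) and passing $\tau\to 0$ produces $D_t\nabla h$ piecewise $C^{0,\alpha}$, at which point one may legitimately differentiate the equation in $t$ and continue the bootstrap. This fractional-step mechanism is the missing idea in your proposal.
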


Theorem~\ref{thm1} will follow from the next result. 

\begin{thm}[$C^{1,\alpha}$ regularity] \label{thm2}
Suppose we are under the assumptions of Theorem~\ref{thm1}.
There exist $\alpha \in (0,1)$ and $C_0>0$, depending on $n$, $a_+$, and $a_-$, such that $F(u) \cap Q_{1/2}$ is a $C^{1,\alpha}$ graph in the $e_n$-direction, i.e., there is some $C^{1,\alpha}$ function $g: B_{1/2}' \times (-1/4,0] \to (-\delta, \delta)$ such that 
$$
F(u) \cap Q_{1/2}= \big\{ (x, t) \in Q_{1/2} : u(x,t)=0 \ \hbox{and} \  x_n = g(x',t) \big \}
$$
and 
$
\| g \|_{C^{1,\alpha}({B_{1/2}' \times (-1/4,0]})} \leq C_0.
$
\end{thm}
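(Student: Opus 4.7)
My plan is to follow De Silva's improvement-of-flatness scheme, adapted to the parabolic two-phase free transmission setting. Since both the equations and the transmission condition are invariant under the parabolic rescaling $u_r(x,t):=u(rx,r^2t)/r$, the natural objects are parabolic cylinders $Q_r=B_r\times(-r^2,0]$. The goal of the main inductive step is the following: there exist universal $\rho,\delta_0\in(0,1)$ such that whenever $\sup_{Q_1}|u-x_n|\leq\delta\leq\delta_0$, one can find a unit vector $\nu$ with $|\nu-e_n|\leq C\delta$ for which $\sup_{Q_\rho}|u-x\cdot\nu|\leq\delta\rho/2$. Iterating this at every dyadic scale and at every free-boundary point of $Q_{1/2}$, followed by a standard summing-of-rotations argument, gives the $C^{1,\alpha}$ graph description of $F(u)$ with the quantitative estimate of Theorem~\ref{thm2}.

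The technical heart is a Harnack-type inequality for the normalized increment $\tilde u:=(u-x_n)/\delta$: if $u\geq x_n-\delta$ in $Q_1$ and $u(x_0,t_0)\geq (x_0)_n-\delta/2$ at some interior point of $Q_{1/2}$, then $u\geq x_n-(1-c)\delta$ in a smaller parabolic subcylinder, for a universal $c>0$ (and symmetrically from above). Inside whichever phase contains a substantial sub-cylinder the classical interior parabolic Harnack provides the lifting; to propagate it across the free interface $F(u)$ and preserve the gain for positive times, one needs explicit radial sub/supersolutions of the free transmission problem that respect the anisotropic diffusions $a_+\neq a_-$ and satisfy $|\nabla w^+|=|\nabla w^-|$ on their zero sets. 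Designing these barriers and translating them in time is the main obstacle of the proof, and is the new ingredient announced in the abstract. Iterating the Harnack inequality at dyadic parabolic scales then yields for $\tilde u$ a uniform H\"older modulus of continuity, in the parabolic metric, on a slightly smaller cylinder, independent of $\delta$.

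Equipped with this compactness I argue by contradiction. Given a failing sequence $u_k$ with $\delta_k\to 0$, extract a parabolic-H\"older limit $\tilde u^\infty$ of $\tilde u_k=(u_k-x_n)/\delta_k$ on $Q_{1/2}\cap\{x_n\neq 0\}$. Using sup-/inf-convolution barrier techniques standard in free-boundary viscosity theory I would verify that $\tilde u^\infty$ solves the linearized transmission problem
\begin{equation*}
a_\pm\,\partial_t\tilde u^\infty-\Delta\tilde u^\infty = 0 \text{ in } \{\pm x_n>0\},\qquad \tilde u^{\infty,+}=\tilde u^{\infty,-},\quad \partial_n\tilde u^{\infty,+}=\partial_n\tilde u^{\infty,-} \text{ on } \{x_n=0\}.
\end{equation*}
Continuity across $\{x_n=0\}$ reflects the $\delta_k$-flatness of $F(u_k)$, while the matching of normal derivatives is the first-order linearization of $|\nabla u^+|=|\nabla u^-|$ at the reference plane $x_n$, and uses the barriers of Step~1 to pass to the limit.

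By the parabolic Schauder theory for this linear transmission operator developed in Section~\ref{sec:transmission}, $\tilde u^\infty$ is parabolically $C^{1,\alpha}$ through $\{x_n=0\}$, so after subtracting $\tilde u^\infty(0,0)$ one has, near the origin,
\begin{equation*}
\tilde u^\infty(x,t) = \beta'\cdot x' + \gamma\, x_n + O\big((|x|^2+|t|)^{(1+\alpha)/2}\big),
\end{equation*}
with the \emph{same} coefficients $\beta'\in\R^{n-1},\gamma\in\R$ on both sides of the interface (thanks to the two matching conditions). Setting $\nu_k:=(e_n+\delta_k(\beta',\gamma))/|e_n+\delta_k(\beta',\gamma)|$ absorbs the affine part into a rotated plane, and a short computation yields $\sup_{Q_\rho}|u_k-x\cdot\nu_k|\leq \delta_k\rho/2$ for $k$ large and $\rho$ chosen universally, contradicting the failure of improvement of flatness. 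The two hardest parts are therefore (i) the anisotropic parabolic Harnack inequality of Step~1, where a barrier must be crafted by hand to cross $F(u)$ and continued for later times, and (ii) the linear Schauder bound for the limit transmission problem, which must be established independently and is precisely the task carried out in Section~\ref{sec:transmission}.
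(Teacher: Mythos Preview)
Your outline is correct and matches the paper's strategy (Sections~\ref{sec:harnack}--\ref{sec:fbregularity}): Harnack inequality $\to$ H\"older compactness for $(u-x_n)/\delta$ $\to$ linearized transmission limit $\to$ regularity of the limit (Theorem~\ref{thm:limitpb}) $\to$ improvement of flatness (Theorem~\ref{improvflat}) $\to$ iteration (Theorem~\ref{thm:improvflat2}) $\to$ graph description (Section~\ref{sec:fbregularity}). The one substantive technical difference is in Step~1: the paper does \emph{not} build radial sub/supersolutions satisfying $|\nabla w^+|=|\nabla w^-|$ on their zero set, but instead constructs $w=x_n-\delta+c_0\delta\,s(t)\phi_1(x')\phi_2\big(x_n+\tfrac{5}{8}t\big)-c_0\delta$ on an oblique cylinder sliding toward the origin, checks it is a strict subsolution of the heat equation on each phase, and rules out touching on $F(u)$ via a Hopf-lemma perturbation (Lemma~\ref{lem:harnack}); also, the limit problem is shown to be $C^{1,1}$ (not merely $C^{1,\alpha}$), giving the quadratic bound $\sup_{Q_r}|v-b\cdot x|\leq C_0 r^2$ used in Theorem~\ref{improvflat}.
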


This paper is organized as follows. In Section~\ref{sec:prelim}, we introduce the necessary notation and definitions that we will use. In Section~\ref{sec:harnack}, we show a Harnack-type inequality and derive useful H\"{o}lder-type estimates. 
In Section~\ref{sec:transmission}, we consider linear flat transmission problems and establish the regularity of viscosity solutions at the interface. In Section~\ref{sec:improvement}, we show the improvement of flatness result via a compactness argument, which relies on the Harnack inequality and the regularity of solutions to the limiting transmission problem. We prove Theorem~\ref{thm2} in Section~\ref{sec:fbregularity} and
 Theorem~\ref{thm1} in Section~\ref{sec:hodograph}. We give a counterexample to Lipschitz regularity of solutions in Section~\ref{sec:counterexample}. In the Appendix, we prove a stability result for compact graphs in metric spaces with the Hausdorff distance.  
 
 \section{Preliminaries} \label{sec:prelim}

 We introduce some notation and definitions that we will use throughout this paper.

 \subsection{Notation} 
Given $(x,t)\in \R^{n+1}$, we call $x\in \Rn$ the space variable and $t \in \R$  the time variable. A point $x\in \Rn$ will sometimes be written as $x=(x',x_n)$, where $x'\in \R^{n-1}$ and $x_n \in \R$.
Given a function $u(x,t)$, we denote by $\nabla u$ the gradient of $u$ with respect to $x$, $D^2 u$ the Hessian with respect to $x$, $\nabla ' u$ the gradient with respect to $x'$, $u_n$ the partial derivative with respect to $x_n$, and $u_t$ the partial derivative with respect to $t$.

 We call a \textit{parabolic cylinder} any set of the form 
 $$Q=\Omega\times (t_1, t_2],$$ 
 where $\Omega$ is a smooth bounded domain in $\Rn$ and $t_1<t_2$. We denote by $\partial_p Q$ the \textit{parabolic boundary} of $Q$, i.e., $\partial_p Q= \partial_b Q \cup \partial_l Q$, where
 $ \partial_b Q= \Omega \times \{t_1\}$ is the \textit{bottom}, and
 $\partial_l Q= \partial \Omega \times [t_1,t_2]$ is the \textit{lateral} part of the boundary.
  
Given $(x,t), (y,s)\in \R^{n+1}$, we define the \textit{parabolic distance} as
$$
d_p((x,t), (y,s)) :=\big(|x-y|^2 + |t-s|\big)^{1/2}.
$$
 We denote Euclidean balls and cylinders in the usual way. Let $B_r = \{x \in \Rn : |x|<r\}$ be the ball of radius $r$ centered at $0$ in $\R^n$. For $x_0\in \Rn$ and $t_0 \in \R$, we write: 
 \begin{align*}
   B_r(x_0) &= B_r + x_0,\\
  Q_r(x_0,t_0) &= B_r(x_0)\times(t_0-r^2,t_0], \\
   Q_r^\pm(x_0,t_0) &=   Q_r(x_0,t_0) \cap \{\pm x_n>0\}.
 \end{align*}
For $(x_0,t_0)=(0,0)$, we  write $Q_r$ and $Q_r^\pm$.
  Let $u$ be a continuous function in $Q$. We denote the positive and negative phases of $u$ in $Q$ by  $Q^\pm(u) = \{\pm u>0\} \cap Q$, and the free boundary by $F(u) = \{u=0\} \cap Q$.  If $Q=Q_r$, with $r>0$ and $r\neq 1$, we write $Q_r^\pm(u)$ and $F_r(u)$. 
 
 \subsection{Function spaces}
Let $C(Q)$ be the set of continuous functions in $Q$. 
Given $k\in \N$, $C^{k}(Q)$ is the set of all functions $u$ such that $D_x^m D_t^l u \in C(Q)$ for all $|m|+2 l\leq k$,  where $m$ is a multi-index and $l\geq 0$ is an integer.
 
 Given $\alpha\in(0,1]$, we define $C^{0,\alpha}(\overline{Q})$ as the set of all functions $u\in C(\overline{Q})$ such that
 $$
\|u\|_{C^{0,\alpha}(\overline{Q})} := \|u\|_{L^\infty(Q)} + [u]_{C^{0,\alpha}(\overline{Q})} < \infty,
 $$
 where the $\alpha$-H\"{o}lder seminorm (or Lipschitz seminorm if $\alpha=1$) of $u$ is given by
 \begin{align*}
 [u]_{C^{0,\alpha}(\overline{Q})} := \sup_{ (x,t)\neq (y,s)} \frac{|u(x,t)-u(y,s)|}{d_p((x,t),(y,s))^{\alpha}}.
 \end{align*}
 We also define the $\alpha$-H\"{o}lder seminorm in $t$ as
 $$
[u]_{C_t^{0,\alpha}(\overline{Q})} := \sup_{ (x,t)\neq (x,s)} \frac{|u(x,t)-u(x,s)|}{|t-s|^\alpha}.
 $$
Note that if $u\in C^{0,\alpha}(\overline Q)$, then $u$ will be $\alpha$-H\"{o}lder continuous in $x$ and $\alpha/2$-H\"{o}lder continuous in $t$.  More generally, we define $C^{k,\alpha}(\overline{Q})$ as the set of all functions $u\in C(\overline{Q})$ such~that
 \begin{align*}
      \|u\|_{C^{k,\alpha}(\overline{Q})} &:= \sum_{|m| + 2 l \leq k} \|D_x^m D_t^l u\|_{L^\infty(Q)} + \sum_{|m| + 2 l = k} [D_x^m D_t^l u]_{C^{0,\alpha}(\overline{Q})} \\
      &\qquad\qquad +  \sum_{|m| + 2 l = k-1} [D_x^m D_t^l u]_{C_t^{0,(1+\alpha)/2}(\overline{Q})} <\infty. 
 \end{align*}
The spaces $C^{k,\alpha}(\overline{Q})$ can be characterized in terms of pointwise properties using Campanato's definition. This characterization will be useful to show boundary regularity estimates in Section~\ref{sec:transmission}. 
Given an integer $k \geq 0$, a \textit{$k$-th order polynomial} is a function  of the form
 $$
 P_k(x,t) = \sum_{|m| + 2 l \leq k} c_{m l} x^m t^l,
 $$
 for some coefficients $c_{ml}\in \R$, where $m$ and $l$ are as before.
 Suppose that $(0,0)\in Q$.
We say that a function $u\in C(Q)$ is \textit{pointwise} $C^{k,\alpha}$ at $(0,0)$, and write $u\in C^{k,\alpha}(0,0)$, if 
$$
[u]_{C^{k, \alpha}(0,0)} := \inf_{P_k} \sup_{Q_r\subset Q} {|u-P_k|}{r^{-(k+\alpha)}}<\infty.
$$
 For any point $(x,t)\in Q$, we define $[u]_{C^{k,\alpha}(x,t)} := [u(\cdot+x,\cdot+t)]_{C^{k,\alpha}(0,0)}$. 
 Then $u\in C^{k,\alpha}(\overline{Q})$ if and only if $D_x^m D_t^l u\in C(\overline Q)$ for all $|m| + 2 l \leq k$, and
  $$
 \sup_{(x,t)\in \overline{Q}} [u]_{C^{k,\alpha}(x,t)} < \infty. 
 $$
For instance, see \cite[Lemma 2.3]{LW}.

\subsection{Viscosity solutions}

In the following definitions, $Q$ will be a parabolic cylinder.   
 
 \begin{defn}
Given $u, v \in C(Q)$, we say that $v$ touches $u$ from above (resp. below) at $(x_0,t_0)\in Q$ if $u(x_0,t_0)=v(x_0,t_0)$, and there is some $r>0$ such that
 $$u(x,t) \leq v(x,t) \quad (\hbox{resp.}~\geq) \quad \hbox{for all}~ (x,t)\in Q_r(x_0,t_0) \subset Q.$$
 \end{defn}

 \begin{defn} \label{viscsol}
 We say that $u\in C(Q)$ is a \textit{viscosity subsolution} (resp.~supersolution) to the free transmission problem \eqref{FBP} if the following conditions are satisfied:
\begin{enumerate}[$(i)$]
\item If $v\in C^{2}(Q^+(u))$ touches $u$ from above (resp.~below) at $(x_0,t_0)\in Q^+(u)$, then
 $$a_+v_t(x_0,t_0) - \Delta v (x_0,t_0) \leq 0 \quad (\hbox{resp.}~\geq).$$
 
 \item If $v\in C^{2}(Q^-(u))$ touches $u$ from above (resp.~below) at $(x_0,t_0) \in Q^-(u)$, then
 $$a_-v_t(x_0,t_0) - \Delta v (x_0,t_0) \leq 0 \quad (\hbox{resp.}~\geq).$$
 
\item  If $v\in C^{2}(\overline{Q^+(v)})\cap C^{2}(\overline{Q^-(v)})$ touches $u$ from above (resp.~below) at $(x_0,t_0) \in F(u)\cap F(v)$ and $|\partial_\nu v^\pm (x_0,t_0)|> 0$, then 
$$
\partial_\nu v^- (x_0, t_0) - \partial_\nu v^+ (x_0, t_0) \leq 0 \quad (\hbox{resp.}~\geq),
$$
where $\partial_\nu v^\pm (x_0,t_0)=- |\nabla v^\pm (x_0,t_0)|$ and
$$
\nabla v^\pm (x_0,t_0) = \lim_{\substack{(y,s)\to (x_0,t_0)\\ (y,s)\in \{\pm v>0\}}} \nabla v^\pm (y,s).
$$
\end{enumerate}
We call $u$ a \textit{viscosity solution} if it is both a viscosity subsolution and a viscosity supersolution.
 \end{defn}

\section{Harnack Inequality} \label{sec:harnack}

We prove a Harnack-type inequality (Theorem~\ref{Harnack}) for viscosity solutions of \eqref{FBP} in $Q_1$. As a consequence, we derive a key regularity estimate (Corollary~\ref{holder}) for our compactness argument in Section~\ref{sec:improvement}.
Theorem~\ref{Harnack} will follow from iterating the next main lemma.

\begin{lem} \label{lem:harnack}
There exists a universal constant $\delta_0>0$ such that if $u$ is viscosity solution to \eqref{FBP} in $Q_1$ satisfying
$$
x_n-\delta \leq u(x,t) \leq x_n + \delta \quad \hbox{for some}~\delta \leq \delta_0,
$$
and $u(e_n/2, -9/10)\geq x_n$, then there exists $c\in (0,1)$, depending on $n$, $a_+$, and $a_-$, such that
$$
u(x,t)\geq x_n -(1-c)\delta \quad  \hbox{in}~{Q_{1/3}}.
$$
Analogously, if $u(e_n/2, -9/10)\leq x_n$, then
$$
u(x,t) \leq x_n + c \delta \quad \hbox{in}~{Q_{1/3}}.
$$
\end{lem}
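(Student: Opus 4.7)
The strategy adapts De Silva's elliptic Harnack scheme to the parabolic transmission setting: first propagate the extra positivity of $u$ above $x_n - \delta$ within the positive phase using the classical parabolic Harnack inequality, then push the improvement across the free boundary via a comparison with a carefully chosen strict viscosity subsolution of \eqref{FBP}. By symmetry, it suffices to treat the first alternative. Set $v(x,t) := u(x,t) - (x_n - \delta) \ge 0$ on $Q_1$. Since $x_n - \delta$ is a classical solution of both $a_\pm h_t - \Delta h = 0$, the function $v$ satisfies $a_\pm v_t - \Delta v = 0$ in $Q^\pm(u)$, and the hypothesis $u(e_n/2, -9/10) \ge x_n$ becomes $v(e_n/2, -9/10) \ge \delta$. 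Because $|u - x_n| \le \delta$, the point $p_0 := (e_n/2, -9/10)$ lies well inside $Q^+(u)$ and $\{x_n > 2\delta\} \cap Q_1 \subset Q^+(u)$. Applying the classical non-divergence parabolic Harnack inequality to the nonnegative $v$, chained along overlapping cylinders entirely contained in $\{x_n > 2\delta\}$, yields
$$v(x,t) \ge c_1 \delta \quad \text{on } G := \overline{B_{1/4}(e_n/2)} \times [-3/4, 0],$$
with $c_1 = c_1(n, a_+) > 0$ universal.

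To extend the estimate from $G$ across the interface into $Q_{1/3}$, construct a function $\phi \in C(\overline{Q_{2/3}})$ satisfying: (i) $a_+ \phi_t - \Delta \phi \le 0$ in $\{x_n > 0\}$ and $a_- \phi_t - \Delta \phi \le 0$ in $\{x_n < 0\}$, strictly; (ii) $\phi$ is continuous across $\{x_n = 0\}$ and $\partial_{x_n}\phi|_+ - \partial_{x_n}\phi|_- \ge 0$ there, so that the subsolution transmission condition of Definition~\ref{viscsol} holds; (iii) $\phi \le 1$ on $G$ and $\phi \le 0$ on $\partial_p Q_{2/3} \setminus G$; and (iv) $\phi \ge c_2 > 0$ on $\overline{Q_{1/3}}$. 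A natural ansatz is a radially decreasing function centered above $G$, with profiles of the form $\phi_\pm(x, t) = e^{-\alpha_\pm|x-x_*|^2 - \beta_\pm (t - t_*)} - \rho_\pm$ in each half-space, parameters tuned to the corresponding $a_\pm$ and an additional linear-in-$x_n$ corrector enforcing matching values and the correct sign of the normal-derivative jump on $\{x_n = 0\}$. Given such $\phi$, the function $\psi := x_n - \delta + c_1 \delta \phi$ is a viscosity subsolution of \eqref{FBP}, since $x_n$ satisfies the transmission problem with equality and $c_1\delta \phi$ is a strict subsolution of it. By construction $\psi \le u$ on $\partial_p Q_{2/3} \cup G$, so the comparison principle for \eqref{FBP} yields $u \ge \psi$ on $Q_{2/3}$, whence $u(x,t) \ge x_n - (1 - c_1 c_2)\delta$ on $\overline{Q_{1/3}}$, giving the lemma with $c := c_1 c_2$.

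The principal technical obstacle is the construction of the barrier $\phi$ in this anisotropic parabolic transmission setting. Because $a_+ \ne a_-$, symmetric Gaussian or exponential profiles that solve the heat inequality in each half-space separately fail to match continuously across $\{x_n = 0\}$ while simultaneously producing a sign-correct normal-derivative jump; one has to work with different exponents in the two phases and insert piecewise correctors (typically linear in $x_n$) tuned to enforce both continuity and the subsolution transmission condition. A related delicate point is to verify the comparison principle used above in the viscosity framework of Definition~\ref{viscsol}: at potential touching points of $u - \psi$ on $F(u)$, one leverages the strict phase inequalities and the compatible normal-derivative jump of $\phi$ to rule out violations of the transmission test, while at interior points of either phase the standard parabolic comparison suffices.
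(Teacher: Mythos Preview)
Your outline follows the De Silva template faithfully, but it leaves the essential step---the construction of the barrier $\phi$ and the verification of the comparison argument---at the level of an ansatz. Two concrete gaps:

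\textbf{(a) The barrier interface is in the wrong place.} You build $\phi$ as a piecewise function with its kink on $\{x_n=0\}$, and you verify a transmission inequality there. But the free boundary $F(u)$ lies in the slab $\{|x_n|\le\delta\}$, not on $\{x_n=0\}$. Definition~\ref{viscsol}(iii) only activates when the test function's own zero set meets $F(u)$ at the touching point; your $\psi=x_n-\delta+c_1\delta\phi$ vanishes on a surface unrelated to $\{x_n=0\}$, and if the touching point $(x_0,t_0)\in F(u)$ has $x_{0,n}\neq 0$, then near it $\psi$ is $C^2$ and only a subsolution of one of the two heat equations---you have no control over which phase of $u$ lies on which side. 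The piecewise Gaussian ansatz does not address this.

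\textbf{(b) No comparison principle is available.} You invoke ``the comparison principle for \eqref{FBP}'' but none is proved in the paper, and with the transmission condition formulated as in Definition~\ref{viscsol} it is not immediate; the definition only tests with functions whose zero set passes through the contact point, which is a restrictive class.

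The paper sidesteps both issues with a different and rather elegant idea. The barrier $w$ is taken to be globally $C^2$ (built from the first Dirichlet eigenfunction in $x'$ times a $C^2$ bell profile in a moving $x_n$-coordinate, on an oblique space-time cylinder sliding from $B_r(e_n/2)\times\{-4/5\}$ to a neighborhood of the origin), and the exponential time decay is chosen large enough that $a_\pm w_t-\Delta w<0$ holds for \emph{both} values of $a_\pm$ simultaneously. There is then no interface in the barrier at all, so conditions (i)--(ii) rule out contact in $Q^\pm(u)$ regardless of where $F(u)$ sits. At a free-boundary contact point the $C^2$ barrier has $\partial_\nu w^+=\partial_\nu w^-$, so (iii) is not yet violated; the paper then adds a small multiple of a caloric function $\varphi$ supported on one side (produced via the Hopf lemma on a tangent ball) to obtain a test function $\tilde w$ with a strict jump $\partial_\nu\tilde w^--\partial_\nu\tilde w^+<0$, contradicting the supersolution property. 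This replaces your unproved comparison principle by a direct sliding argument with pointwise contradiction.
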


To prove this result, we need the classical Hopf Lemma for uniformly parabolic equations; see \cite[Lemma 2.8]{L}. For clarity, we state it here in the form we will use it.

\begin{lem}[Hopf Lemma] \label{Hopf}
For $\eta, \rho>0$ and $(y,s)\in \R^{n+1}$, let 
$$E\equiv E_{\eta,\rho}(y,s) = \big\{(x,t) \in \R^{n+1} : |x-y|^2+\eta^2 (s-t) < \rho^2 , \ t< s \big \}.$$
Suppose that $u\in C^2(E)$ satisfies $a u_t - \Delta u \geq 0$ in $E$ for some $a>0$, and that there is $(x_0, s)\in \R^{n+1}$ with $|x_0-y|=\rho$ such that 
\begin{enumerate}[$(i)$]
    \item $u(x_0,s) \leq u(x, t)$ for all $(x,t)\in E$;
    \item $u(x_0, s) < u(x,t)$  for all $(x,t) \in E$  such that $|x-y| \leq \rho/2.$
\end{enumerate}
If $\nu = \frac{y-x_0}{|y-x_0|}$, then $\partial_\nu u (x_0, s) >0$. 
\end{lem}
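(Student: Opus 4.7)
The plan is to prove this standard parabolic Hopf lemma by the classical barrier method, adapted to the ``parabolic ball'' domain $E$. I would work on the inner annular region
$$E' = E \cap \{|x-y| > \rho/2\},$$
whose outer lateral boundary coincides with the parabolic sphere $\{|x-y|^2 + \eta^2(s-t) = \rho^2\}$ containing the touching point $(x_0, s)$, and whose inner lateral boundary $\{|x-y| = \rho/2\}$ lies where strict inequality (ii) holds.

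Step one is to construct an explicit barrier adapted to the parabolic sphere. I would set
$$w(x,t) = e^{-\gamma(|x-y|^2 + \eta^2(s-t))} - e^{-\gamma\rho^2},$$
so that $w \equiv 0$ on the outer lateral boundary, $w > 0$ in $E'$, and one computes
$$a w_t - \Delta w = e^{-\gamma(|x-y|^2 + \eta^2(s-t))}\bigl(a\gamma\eta^2 + 2n\gamma - 4\gamma^2|x-y|^2\bigr).$$
Since $|x-y| \geq \rho/2$ throughout $E'$, choosing $\gamma$ sufficiently large (depending on $a, \eta, \rho, n$) makes the bracketed expression negative, so $w$ is a classical subsolution of $aw_t - \Delta w \leq 0$ in $E'$.

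Step two is the comparison. By hypothesis (ii) and continuity, the function $u - u(x_0,s)$ is bounded below by some $\vep_0 > 0$ on the compact set $\{|x-y| = \rho/2\} \cap \overline{E}$ (this set is compact because $|x-y| = \rho/2$ forces $t \in [s - 3\rho^2/(4\eta^2), s]$). Since $w$ is bounded on this set, I can pick $\vep > 0$ small enough that $\vep w \leq u - u(x_0,s)$ there. On the outer lateral boundary $w$ vanishes while $u - u(x_0,s) \geq 0$ by (i). Hence $u - u(x_0,s) - \vep w \geq 0$ on the parabolic boundary of $E'$. Since this difference satisfies $a(u - u(x_0,s) - \vep w)_t - \Delta(u - u(x_0,s) - \vep w) \geq 0$ in $E'$, the weak minimum principle for parabolic supersolutions gives $u - u(x_0,s) \geq \vep w$ throughout $E'$.

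Step three is to extract the derivative. A direct computation at $(x_0, s)$ yields
$$\nabla w(x_0, s) = -2\gamma(x_0 - y)e^{-\gamma\rho^2}, \qquad \partial_\nu w(x_0, s) = 2\gamma\rho\, e^{-\gamma\rho^2} > 0.$$
Since $u - u(x_0,s) - \vep w \geq 0$ in $E'$ with equality at $(x_0, s)$, and $\nu$ points into $E'$ spatially, the inward normal derivative satisfies $\partial_\nu u(x_0,s) \geq \vep\,\partial_\nu w(x_0, s) > 0$. The main technical subtlety is the accurate identification of the parabolic boundary of the non-cylindrical region $E'$ and ensuring (ii) propagates by compactness to a uniform positive lower bound on the entire inner lateral face up to the top time $t=s$; everything else is routine once $\gamma$ and $\vep$ are chosen in that order.
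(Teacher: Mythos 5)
The paper does not prove this lemma itself; it is cited to Lieberman's book \cite{L}, so there is no internal proof to compare against. Your proposal is the standard barrier argument, and the barrier computation in Step 1 is correct: with $\Phi(x,t)=|x-y|^2+\eta^2(s-t)$ one indeed gets $aw_t-\Delta w=e^{-\gamma\Phi}\bigl(a\gamma\eta^2+2n\gamma-4\gamma^2|x-y|^2\bigr)$, which is negative on $\{|x-y|\geq\rho/2\}$ for large $\gamma$, and Step 3 extracts the normal derivative correctly.

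However, Step 2 has a genuine gap, which you flag as ``the main technical subtlety'' but do not actually resolve: the claim that $u-u(x_0,s)\geq\vep_0>0$ on the compact set $K:=\{|x-y|=\rho/2\}\cap\overline{E}$ does not follow from hypothesis $(ii)$ and continuity. Hypothesis $(ii)$ gives strict inequality only for $(x,t)\in E$, and $E$ requires $t<s$; the compact set $K$ contains the top rim $\{|x-y|=\rho/2,\ t=s\}$ (as well as the bottom rim $\{\Phi=\rho^2\}$), and at these points continuity delivers only $u-u(x_0,s)\geq 0$, not strict positivity. A nonnegative continuous function that is strictly positive on the interior of a compact set can still have infimum zero on the closure, so ``positive on a compact set'' by itself proves nothing. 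On the top rim $w=e^{-\gamma\rho^2/4}-e^{-\gamma\rho^2}>0$, so you really do need strict positivity of $u-u(x_0,s)$ there; this requires the parabolic strong maximum principle: if $u(x_1,s)=u(x_0,s)=\min_{\overline{E}}u$ for some $|x_1-y|<\rho$, then the strong minimum principle for the supersolution $u$ forces $u\equiv u(x_0,s)$ on a backward-in-time set containing points of $E\cap\{|x-y|\leq\rho/2\}$, contradicting $(ii)$. (The bottom rim $\{\Phi=\rho^2\}$ is harmless because $w$ vanishes there, although a careful version of the argument should still verify that the comparison $u-u(x_0,s)\geq\vep w$ survives in a neighborhood of it.) With the strong maximum principle invoked to push the strict inequality to $t=s$, the compactness argument then closes and the rest of your proof goes through.
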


We give the proof of the main lemma.

\begin{proof}[Proof of Lemma~\ref{lem:harnack}]
We may assume that $a_+, a_- \in (0,1]$. Otherwise, we take $a u(x/a, t/a)$, where $a=\max(a_+,a_-)$.
Fix $r=5/12$ and $\delta_0\in(0, 1/20]$. Let $u$ satisfy
\begin{equation}\label{assumption}
x_n -\delta\leq u \leq x_n + \delta \quad \hbox{in}~Q_1
\end{equation}
for some $\delta \in (0, \delta_0]$. Then the free boundary of $u$ is trapped between the planes $x_n=-\delta$ and $x_n=\delta$, and thus,  $B_{r}(e_n/2) \subset \{u(\cdot, t)>0\}$ for any $t\in (-1,0]$. Set $u_\delta = \frac{u-x_n}{\delta}$. By \eqref{assumption}, we have $u_\delta +1\geq 0$ in $Q_1$ and $(u_\delta+1)(e_n/2,-9/10)\geq 1$, by assumption.

Applying the interior Harnack inequality for the heat equation in $B_{r}(e_n/2)$, we see that there exists some universal constant $c_0\in (0,1)$ such that
\begin{equation*} 
\inf_{x\in B_{r}(e_n/2)} u_\delta(x,-4/5) \geq -1+ c_0,
\end{equation*}
or equivalently, 
\begin{equation}\label{interiorH1}
    u(x,t) \geq x_n -(1-c_0)\delta \quad \hbox{for all}~(x,t)\in B_r(e_n/2) \times \{-4/5\}.
\end{equation}

For $t\in[-4/5,0]$, consider $s(t) = e^{-K\big(t+\tfrac{4}{5}\big)}$, where $K>0$ is some large constant to be determined. Let $B_{r}'=\{x' \in \R^{n-1} : |x'|<r\}$. Define the function $\phi: \overline{B_{r}'} \times [-r, r] \to [0, 1]$ as  $$\phi(x',x_n)=\phi_1(x')\phi_2(x_n),$$ 
where $\phi_1 : \overline{B_r'}\to[0,1]$ is the first  eigenfunction of the Laplacian in $B_{r}'$ corresponding to the first eigenvalue $\lambda_1>0$, with zero boundary data, i.e., 
$$
\begin{cases}
- \Delta_{x'} \phi_1 = \lambda_1 \phi_1 & \hbox{in}~B_{r}',\\
\phi_1 = 0 & \hbox{on}~\partial B_{r}',
\end{cases}
$$
such that $\sup_{B_{r'}}\phi_1=1$, and $\phi_2 : [-r,r]\to[0,1]$ is the bell-shaped even function
$$
\phi_2(x_n) = \frac{2}{r^3} |x_n|^3 - \frac{3}{r^2} x_n^2+1.
$$
Consider the oblique cylinder (see Figure~\ref{fig:barrier}): 
 $$D = \displaystyle\bigcup_{t\in (-4/5, 0]} \Big(B_{r}'\times\big[-\tfrac{5}{8}t-r, -\tfrac{5}{8} t+ r\big]\Big)\times \{t\}.$$
The parabolic boundary of $D$, $\partial_pD$, is the union of the lateral boundary 
$$\partial_l D= \displaystyle\bigcup_{t\in [-4/5, 0]} \big\{(x',x_n)\in B_1 : |x'|=r, \  |x_n + \tfrac{5}{8}t|=r\big\} \times \{t\}$$
and the bottom 
$
\partial_b D = \Big(B_{r}'\times\big(\tfrac{1}{2}-r, \tfrac{1}{2}+r\big)\Big) \times \{-4/5\}.
$
For $(x,t)\in \overline{D}$, define the function
$$
w(x,t) = x_n-\delta + c_0 \delta s(t) \phi\big(x',x_n+\tfrac{5}{8}t\big) - c_0\delta.
$$ 

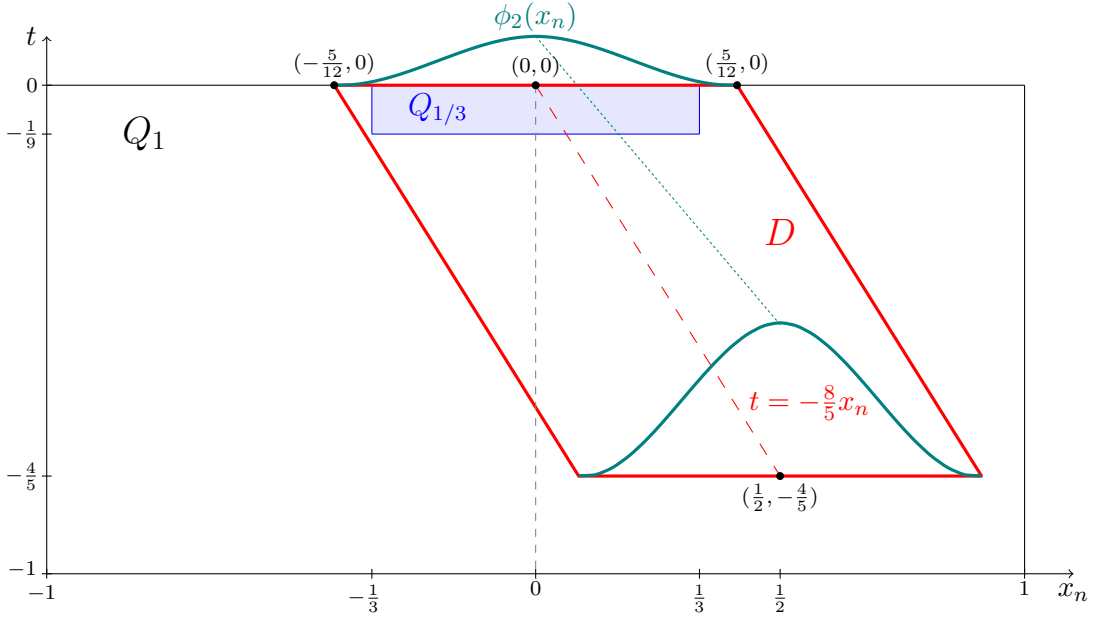
\begin{figure}[h] \label{fig:barrier}
\begin{tikzpicture}[domain = -12:12,scale = 0.65, x=1.0cm,y=1.0cm]
\fill[color=qqqqff,fill=qqqqff,fill opacity=0.1] (-3.351,0.) -- (-3.35,-1.) -- (3.35,-1.) -- (3.35,0.) -- cycle; 
\draw (-10.,0.)-- (-10.,-10.);
\draw (-10.,-10.)-- (10.,-10.);
\draw (10.,-10.)-- (10.,0.);
\draw (10.,0.)-- (-10.,0.);
\draw[color=black] (-8,-1) node {\Large$Q_1$};
\draw [color=qqqqff] (-3.35,0.)-- (-3.35,-1.);
\draw [color=qqqqff] (-3.35,-1.)-- (3.35,-1.);
\draw [color=qqqqff] (3.35,-1.)-- (3.35,0.);
\draw [color=qqqqff] (3.35,0.)-- (-3.35,0.);
\draw[color=qqqqff] (-2,-0.5) node {\large$Q_{1/3}$};
\draw [line width=1.2pt,color=ffqqqq] (-4.12,0)-- (0.88,-8.);
\draw [line width=1.2pt,color=ffqqqq] (0.86,-8.)-- (9.14,-8.);
\draw [line width=1.2pt,color=ffqqqq] (9.12,-8.)-- (4.12,0);
\draw [line width=1.2pt,color=ffqqqq] (4.12,0)-- (-4.12,0);
\draw[color=ffqqqq] (5,-3) node {\Large$D$};
\draw [dash pattern=on 5pt off 5pt,color=ffqqqq] (0.,0.)-- (5,-8);
\draw [dash pattern=on 3pt off 3pt, color=gray] (0,0)-- (0,-10); 
\draw[color=ffqqqq] (5.6,-6.5) node {$t=-\frac{8}{5} x_n$};
\draw [dash pattern=on 1pt off 1pt, color=teal] (0,1)-- (5,-4.9);
\draw[->] (-10,0)--(-10,1) node[left] {$t$};
\draw[->] (10,-10)--(11,-10) node[below] {$x_n$};
\draw[scale=1,domain=-4.12:4.12,smooth,color=teal, very thick]
  plot (\x, {1/32* abs(\x)^3 - 3/16* (\x)^2+1}); 
 \draw[color=teal] (0, 1.4) node {$\phi_2(x_n)$};
  \draw[scale=1,domain=0.88:9.12,smooth,color=teal,very thick]
  plot (\x, {(1/32* abs(\x-5)^3 - 3/16* (\x-5)^2+1)*exp(8/7)-8}); 
\begin{scriptsize}
\draw [fill=black] (-10.1,0)--(-9.9,0);
\draw[color=black] (-10.3,0) node {$0$};
\draw [fill=black] (-10.1,-10)--(-9.9,-10);
\draw[color=black] (-10.5,-9.9) node {$-1$};
\draw [fill=black] (-10.1,-8)--(-9.9,-8); 
\draw[color=black] (-10.5,-8) node {$-\frac{4}{5}$};
\draw [fill=black] (-10.1,-1)--(-9.9,-1); 
\draw[color=black] (-10.5,-1) node {$-\frac{1}{9}$};
\draw [fill=black] (5,-10.1)--(5,-9.9);
\draw[color=black] (5,-10.5) node {$\frac{1}{2}$};
\draw [fill=black] (-3.35,-10.1)--(-3.35,-9.9);
\draw[color=black] (-3.5,-10.5) node {$-\frac{1}{3}$};
\draw [fill=black] (3.35,-10.1)--(3.35,-9.9);
\draw[color=black] (3.35,-10.5) node {$\frac{1}{3}$};
\draw [fill=black] (0.,0.) circle (2pt) node[above] {$(0,0)$};
\draw [fill=black] (-4.12,0.) circle (2pt) node[above] {$(-\frac{5}{12},0)$};
\draw [fill=black] (4.12,0.) circle (2pt) node[above] {$(\frac{5}{12},0)$};
\draw [fill=black] (5,-8) circle (2pt)  node[below] {$(\frac{1}{2},-\frac{4}{5})$};
\draw [color=black] (0,-10.1)--(0,-9.9);
\draw[color=black] (0,-10.3) node {$0$};
\draw [color=black] (10,-10.1)--(10,-9.9);
\draw[color=black] (10,-10.3) node {$1$};
\draw [color=black] (-10,-10.1)--(-10,-9.9) ;
\draw[color=black] (-10.1,-10.4) node {$-1$};
\end{scriptsize}
\end{tikzpicture}
\caption{Illustration of the domain $D$ and the function $s(t)\phi_2\big(x_n + \tfrac{5}{8}t\big)$.}
\end{figure}

Note that $w\in C^2(D)$. Furthermore, $0< s(t) \leq 1$ for all $t\in[-4/5, 0]$, $0 \leq \phi\leq 1$ on $\overline{D}$, and $\phi\big(x',x_n+\tfrac{5}{8}t\big)=0$ for all $(x',x_n)\in \partial_l D$. Hence,
$w\leq x_n - \delta$ on $\overline{D}$, so
$$
w \leq u \quad \hbox{on}~\overline D.
$$

Let $h\geq 0$ be the largest number such that $w_h=w+h$ lies below $u$ on $\overline D$, and let $(x_0, t_0)\in \overline{D}$ be a first touching point, i.e.,
$$
w_h(x_0, t_0)= u(x_0, t_0) \quad \hbox{and} \quad w_h \leq u \ \hbox{on}~\overline D.
$$
We need to show that $h \geq c_0 \delta.$ 
Assume that $h < c_0\delta$. 
We will prove that, under this assumption, such a touching point does not exist, yielding a contradiction.
We have that
\begin{align*}
w_h(x,t) < x_n - \delta \leq u(x,t) & \quad \hbox{on}~\partial_l D,\\
w_h(x,t) < x_n -(1-c_0) \delta \leq u(x,t) & \quad \hbox{on}~\partial_b D,
\end{align*}
where the last inequality follows from \eqref{interiorH1}.
Hence, $(x_0, t_0)\notin \partial_p D$.

For $(x,t)\in D$, we compute
\begin{align*}
\frac{1}{c_0\delta} \big(a_\pm \partial_t w_h - \Delta w_h \big) &= a_\pm s' \phi + \frac{5}{8} a_\pm s \phi_1 \phi_2' - s (\Delta_{x'} \phi_1) \phi_2 - s \phi_1 \phi_2'' \\
&= s\phi\left( - a_\pm K  +\lambda_1 + \frac{5}{8} a_\pm  \frac{ \phi_2' }{\phi_2} - \frac{ \phi_2'' }{\phi_2}\right),
\end{align*}
where $s=s(t)$, $\phi_1=\phi_1(x')$, and $\phi_2=\phi_2(x_n+\frac{5}{8} t)$.
If $x_n+\frac{5}{8} t\in (-r,-\tfrac{3r}{4}) \cup (\tfrac{r}{2}, r)$, then $$\frac{5}{8} a_\pm  \frac{ \phi_2' }{\phi_2} - \frac{ \phi_2'' }{\phi_2} <0,$$
since $a_+,a_-\in (0,1]$. Hence, choosing $K> \lambda_1/\min(a_+,a_-)$, get that
\begin{equation}\label{strictsub}
a_\pm \partial_t w_h - \Delta w_h < 0.
\end{equation}
If $x_n+\frac{5}{8} t \in [-\tfrac{3r}{4}, \tfrac{r}{2} ]$, then $\phi_2(x_n+\frac{5}{8} t) \geq \phi_2(-\tfrac{3r}{4} )>0$, and thus,  
$$
\left| \frac{5}{8} a_\pm  \frac{ \phi_2' }{\phi_2} - \frac{ \phi_2'' }{\phi_2}\right|\leq \frac{\|\phi_2\|_{C^2(-r,r)}}{\phi_2(-\tfrac{3r}{4})}.
$$ 
Hence, choosing $K$ large enough, we conclude \eqref{strictsub} for all points in $D$. Therefore, using that $u$ is a viscosity solution, by $(i)$ and $(ii)$ in Definition~\ref{viscsol}, we must have that $(x_0, t_0) \in F(u)\cap D$.

Since $w_h \in C^2(D)$, $w_h < u$ in $D\setminus F(u)$ and $w_h(x_0,t_0)=u(x_0,t_0)$, there exists a parabolic ball $B_\rho(x_1, t_0) \subset \{w_h >0\} \subseteq D\cap \{u>0\}$ such that
\begin{equation}\label{touch}
\partial B_\rho (x_1, t_0) \cap (F(u)\cap D) = \{(x_0, t_0)\}
\end{equation}
for some $(x_1, t_0) \in \{w_h >0\}$ and $\rho>0$ sufficiently small, i.e., $B_\rho(x_1, t_0)$ is a tangent ball to the free boundary of $u$ at the point $(x_0,t_0)$.
Let $\Omega_{\rho} = Q_{2\rho}(x_0, t_0) \cap \{ w_h >0 \}$. Consider the solution to the Dirichlet problem:
$$
\begin{cases}
a_+\varphi_t - \Delta \varphi = 0 & \hbox{in}~\Omega_{\rho},\\
\varphi = 0 & \hbox{on}~\partial_l \Omega_{\rho},\\
\varphi= g(x) & \hbox{on}~\partial_b \Omega_{\rho},
\end{cases}
$$
where $g$ is a smooth function with compact support such that $0\leq g \leq 1$. By the strong maximum principle, we have $0< \varphi< 1$ in $\Omega_\rho$. Taking $\eta>0$ sufficiently large, we see that $E_{\eta,\rho}(x_1,t_0) \subseteq \Omega_{\rho}$ and $\varphi$ satisfies the assumptions of Lemma~\ref{Hopf}. Hence, it follows that
$$
\partial_\nu \varphi (x_0, t_0) >0,
$$
where $\nu=\tfrac{x_1-x_0}{|x_1-x_0|}$ is the interior normal vector at $(x_0,t_0)$.
For $\gamma >0$, define the function 
$$\tilde w = w_h + \gamma \varphi \quad \hbox{in}~Q_{\rho}(x_0, t_0)\equiv Q,$$ where we have extended $\varphi$ by 0 in $Q \cap \{w_h \leq 0\}$. 
In particular, $\tilde w\in C^2(\overline{Q^+(\tilde w)})\cap C^2(\overline{Q^-(\tilde w)}).$
By \eqref{touch} and the fact that $w_h$ cannot touch $u$ in $D \setminus F(u)$, we get that
$$
\tilde w(x_0,t_0) = u(x_0,t_0) \quad \hbox{and}\quad
\tilde w \leq u \ \hbox{in}~Q,
$$
taking $\gamma$ sufficiently small. 
Moreover, $(x_0,t_0)\in F(u)\cap F(\tilde w)$, $|\partial_\nu \tilde w^\pm(x_0,t_0)|>0$, and
$$
\partial_\nu \tilde w^-(x_0,t_0) - \partial_\nu \tilde w^+(x_0,t_0) = - \gamma \partial_\nu \varphi^+(x_0,t_0) <0.
$$
Since $u$ is a viscosity solution, we obtain a contradiction with $(iii)$ in Definition~\ref{viscsol}.
Therefore, $h \geq c_0\delta$, and the desired result follows from the fact that $$w \geq x_n -(1-c) \delta>0 \quad \hbox{on}~\overline{Q_{1/3}} \subseteq D$$ 
with $c= c_0 e^{-\frac{4}{5}K} \inf_{B_{1/3}} \phi>0.$

\end{proof}

\begin{thm}[Harnack inequality] \label{Harnack}
There is a universal constant $\delta_0>0$ such that if $u$ is a viscosity solution to \eqref{FBP} in $Q_1$, and for some  $(x_0, t_0)\in Q_{1/2}$ we have
\begin{equation*} 
x_n + \alpha_0 \leq  u(x,t)\leq x_n + \beta_0 \quad \hbox{in}~ Q_r(x_0,t_0)\subset Q_1
\end{equation*}
with $ 0< \beta_0-\alpha_0 \leq \delta r$ and $\delta \in(0, \delta_0]$, 
then there exist $\alpha_1, \beta_1 \in \R$ such that 
$$
x_n + \alpha_1 \leq u(x,t) \leq x_n + \beta_1 \quad \hbox{in}~ Q_{r/3}(x_0, t_0)
$$
with $\alpha_0 \leq \alpha_1 \leq \beta_1 \leq \beta_0$ and $\beta_1- \alpha_1 \leq \bar c \delta r$, for some $\bar c \in (0,1)$ depending on $n$, $a_+$, and $a_-$.
\end{thm}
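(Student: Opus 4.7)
The plan is to derive Theorem~\ref{Harnack} from Lemma~\ref{lem:harnack} via parabolic rescaling and a case analysis based on the location of the approximate free boundary. By the translation and parabolic rescaling invariances of \eqref{FBP}, I may assume $(x_0, t_0) = (0, 0)$ and consider the rescaled function $\tilde u(y, s) := r^{-1} u(ry, r^2 s)$, which is a viscosity solution to \eqref{FBP} on $Q_1$ satisfying $y_n + \tilde \alpha \leq \tilde u(y, s) \leq y_n + \tilde \beta$ on $Q_1$, with $\tilde \alpha := \alpha_0/r$, $\tilde \beta := \beta_0/r$, and $\tilde \delta := \tilde \beta - \tilde \alpha \leq \delta$. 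It then suffices to produce $\tilde \alpha_1 \geq \tilde \alpha$ and $\tilde \beta_1 \leq \tilde \beta$ with $\tilde \beta_1 - \tilde \alpha_1 \leq \bar c \tilde \delta$ for a universal $\bar c \in (0, 1)$, such that $y_n + \tilde \alpha_1 \leq \tilde u \leq y_n + \tilde \beta_1$ on $Q_{1/3}$; unscaling then recovers the theorem on $Q_{r/3}(x_0, t_0)$.

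Set $\bar m := (\tilde \alpha + \tilde \beta)/2$, so the free boundary of $\tilde u$ lies in the strip $\{|y_n + \bar m| \leq \tilde \delta/2\}$. I split into two cases depending on whether this strip intersects $Q_{1/3}$. If $|\bar m|$ is large enough that the strip is disjoint from the closure of $Q_{1/3}$, then $\tilde u$ has a definite sign throughout $Q_{1/3}$ and solves $a_{\pm} \tilde u_t - \Delta \tilde u = 0$ classically there. The function $v := \tilde u - y_n$ is then caloric in $Q_{1/3}$ with oscillation at most $\tilde \delta$, and the interior Harnack inequality for the heat equation, applied to whichever of $v - \tilde \alpha$ and $\tilde \beta - v$ is at least $\tilde \delta/2$ at a suitable interior point, yields the desired oscillation decay. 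If instead $|\bar m|$ is small, I translate $\tilde u$ in space by $-\bar m e_n$ to re-center the approximate free boundary at $\{z_n = 0\}$, apply a mild parabolic rescaling to return to the unit cylinder, and invoke Lemma~\ref{lem:harnack}: depending on whether $\tilde u(e_n/2, -9/10)$ exceeds or falls below $1/2 + \bar m$, the lemma produces a one-sided improvement of size proportional to $\tilde \delta$, and undoing the transformations transfers this improvement back to a sub-cylinder containing $Q_{1/3}$ provided $\bar m$ is small enough.

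The main technical obstacle lies in the second case: the shift by $\bar m e_n$ moves the natural conclusion cylinder of Lemma~\ref{lem:harnack} away from the origin, so a direct reduction covers only a sub-cylinder that may be strictly smaller than the target $Q_{1/3}$. Overcoming this requires either a careful bookkeeping of cylinder sizes, together with a sufficiently small choice of the threshold $\delta_0$ in the statement, or else a mild extension of Lemma~\ref{lem:harnack} to asymmetric strips of the form $y_n - \delta^- \leq u \leq y_n + \delta^+$, which can be obtained by essentially the same barrier construction used to prove Lemma~\ref{lem:harnack}.
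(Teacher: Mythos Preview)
Your plan is workable but considerably more involved than the paper's. After the parabolic rescaling to $r=1$, $(x_0,t_0)=(0,0)$, the paper simply sets $\tilde\delta = (\beta_0 - \alpha_0)/2$ and $\tilde u = u - (\alpha_0+\beta_0)/2$, so that $x_n - \tilde\delta \leq \tilde u \leq x_n + \tilde\delta$ on $Q_1$, and then applies Lemma~\ref{lem:harnack} directly to $\tilde u$: one of its two alternatives yields a one-sided improvement, from which one reads off $\bar c = 1 - c/2$. There is no case split on $|\bar m|$ and no spatial translation.

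The point you are missing is that the recentering can be done by subtracting a \emph{constant} from $u$ rather than by translating in \emph{space}; the former leaves every parabolic cylinder in place, so the ``cylinder-size'' obstacle you flag never arises. You might object that $\tilde u = u - c$ is not literally a viscosity solution of \eqref{FBP} (its coefficient jump lies along $\{\tilde u = -c\}$ rather than $\{\tilde u = 0\}$), but the proof of Lemma~\ref{lem:harnack} is insensitive to this: the barrier constructed there is a strict subsolution of \emph{both} operators $a_\pm\partial_t - \Delta$, so the touching-point argument works regardless of which heat equation holds locally, and the free-boundary contradiction uses only the transmission structure at $F(u)$, which $\tilde u$ inherits from $u$. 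Your Case~1 also has a small gap: to obtain oscillation decay on $Q_{1/3}$ from the interior parabolic Harnack you need $v=\tilde u - y_n$ to be caloric on a cylinder strictly larger than $Q_{1/3}$, not merely on $Q_{1/3}$ itself; this is repairable by raising the threshold on $|\bar m|$, but it is another symptom of the extra bookkeeping your route introduces.
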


\begin{proof}
Taking $r^{-1} u(r x+ x_0, r^2 t + t_0)$ in place of $u(x,t)$, we may assume that $r=1$ and $(x_0,t_0)=(0,0)$. 
Let $\tilde \delta= \tfrac{\beta_0-\alpha_0}{2}\leq \delta$. Then
$$
x_n - \tilde \delta \leq u(x,t) - \tfrac{\beta_0+\alpha_0}{2} \leq x_n + \tilde \delta \quad \hbox{in}~Q_1.
$$
By Lemma~\ref{lem:harnack}, applied to the function $\tilde u= u - \tfrac{\beta_0+\alpha_0}{2}$, it follows that 
$$
\tilde u(x,t)\geq x_n -(1-c) \tilde \delta \quad \hbox{or}  \quad \tilde u(x,t)\leq x_n + c\tilde \delta \quad  \hbox{in}~Q_{1/3}
$$ 
for some $c\in (0,1)$ depending on $n$, $a_+$, and $a_-$. Assume the first case holds and set
$$
\alpha_1 =\tfrac{\beta_0+\alpha_0}{2} - (1-c) \tilde \delta 
\quad \hbox{and}\quad  \beta_1=\beta_0.$$ 
Then $\alpha_0 \leq \alpha_1 \leq \beta_1=\beta_0$ and $\beta_1-\alpha_1 =\big(1-\tfrac{c}{2}\big)(\beta_0-\alpha_0) \leq \bar c \delta$, where $\bar c = 1-\tfrac{c}{2}$. Moreover,
$$
x_n + \alpha_1 \leq u(x,t) \leq x_n + \beta_1 \quad \hbox{in}~Q_{1/3}.
$$

The result follows similarly from the second case.

\end{proof}

\begin{cor} \label{holder}
Let $u$ be as in Lemma~\ref{lem:harnack}. 
Set 
$u_\delta = \frac{u-x_n}{\delta}$ with $\delta\in (0, \delta_0/4]$.
There exist constants $C>0$ and $\gamma\in (0,1)$, depending on $n$, $a_+$, and $a_-$, such that if $(x_0, t_0) \in Q_{1/2}$, then
$$
|u_\delta (x,t) - u_\delta (x_0, t_0)| \leq C d_p((x,t), (x_0,t_0))^{\gamma}
$$
for all $(x,t)\in Q_{1/2} (x_0,t_0)$ such that $d_p((x,t), (x_0,t_0))\geq 2 {\delta}/{\delta_0}$.
\end{cor}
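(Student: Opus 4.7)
The plan is to derive the H\"older estimate for $u_\delta$ by iterating the Harnack inequality (Theorem~\ref{Harnack}) on a geometric sequence of shrinking parabolic cylinders centered at $(x_0, t_0)$ and then to translate the resulting geometric decay of oscillation into a H\"older modulus for $u_\delta$. The iteration will continue only as long as the Harnack smallness condition is preserved, and this will stop at a scale comparable to the cutoff $2\delta/\delta_0$ appearing in the statement.

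First I would fix $(x_0, t_0) \in Q_{1/2}$ and set $r_k = \tfrac{1}{2}\cdot 3^{-k}$ for $k \geq 0$, which keeps every $Q_{r_k}(x_0, t_0)$ inside $Q_1$. The flatness $-\delta \leq u - x_n \leq \delta$ gives $x_n + \alpha_0 \leq u \leq x_n + \beta_0$ in $Q_{r_0}(x_0, t_0)$ with oscillation $\omega_0 := \beta_0 - \alpha_0 = 2\delta$, and the Harnack smallness $\omega_0 \leq \delta_0 r_0$ is exactly the hypothesis $\delta \leq \delta_0/4$. Iterating Theorem~\ref{Harnack} then yields nested intervals $[\alpha_k, \beta_k]$ with
$$
x_n + \alpha_k \leq u \leq x_n + \beta_k \ \text{in } Q_{r_k}(x_0, t_0), \qquad \omega_k := \beta_k - \alpha_k \leq 2 \bar c^k \delta,
$$
valid as long as the compatibility $\omega_k \leq \delta_0 r_k$, equivalently $4 \delta (3\bar c)^k \leq \delta_0$, remains true. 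Let $K$ be the largest integer for which this compatibility holds, so the estimate above is valid for $0 \leq k \leq K+1$.

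The key observation is that since $\bar c < 1$, maximality of $K$ forces $3^{K+1} > (3\bar c)^{K+1} > \delta_0/(4\delta)$, whence $r_{K+1} < 2\delta/\delta_0$; thus the iteration reaches strictly below the cutoff scale. For any $(x,t) \in Q_{1/2}(x_0, t_0)$ with $d := d_p((x,t),(x_0, t_0)) \geq 2\delta/\delta_0$, either $d \geq r_0$ (handled by the trivial bound $|u_\delta| \leq 1$) or there is a unique $k \in \{0, 1, \ldots, K\}$ with $r_{k+1} < d \leq r_k$. Since both $(x,t)$ and $(x_0, t_0)$ lie in $Q_{r_k}(x_0, t_0)$, dividing by $\delta$ yields
$$
|u_\delta(x,t) - u_\delta(x_0, t_0)| \leq \frac{\omega_k}{\delta} \leq 2\bar c^k.
$$
Setting $\gamma := \log(1/\bar c)/\log 3 \in (0,1)$, one has $\bar c^k = (2 r_k)^\gamma \leq (6d)^\gamma$ because $r_k = 3 r_{k+1} < 3d$, which delivers the estimate with $C = 2\cdot 6^\gamma$.

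The only delicate point, rather than a deep obstacle, is the bookkeeping that guarantees the iteration descends below the scale $2\delta/\delta_0$; this relies on the precise interplay between the hypothesis $\delta \leq \delta_0/4$, the Harnack compatibility $4\delta(3\bar c)^k \leq \delta_0$, and the strict inequality $\bar c < 1$. Once these are aligned so that $r_{K+1} < 2\delta/\delta_0$, the passage from geometric oscillation decay to H\"older continuity is a routine dyadic computation, and all dependence of $C$ and $\gamma$ on the parameters enters solely through the universal constant $\bar c = \bar c(n, a_+, a_-)$.
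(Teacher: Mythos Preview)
Your proof is correct and follows essentially the same approach as the paper: both arguments iterate Theorem~\ref{Harnack} on the geometric sequence of cylinders $Q_{r_k}(x_0,t_0)$ with $r_k = \tfrac{1}{2}\cdot 3^{-k}$, obtain the oscillation decay $\omega_k \leq 2\bar c^k \delta$ as long as the compatibility $4\delta(3\bar c)^k \leq \delta_0$ holds, verify that the iteration reaches below the scale $2\delta/\delta_0$, and then set $\gamma$ via $\bar c = 3^{-\gamma}$ to convert the decay into a H\"older modulus. Your bookkeeping on the stopping index $K$ is slightly more explicit than the paper's choice of $\bar k$ via $r_{\bar k+1}\leq 2\delta/\delta_0 < r_{\bar k}$, but the content is identical.
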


\begin{proof}
Let $(x_0, t_0)\in Q_{1/2}$, $r=1/2$, $\alpha_0=- \delta$, and $\beta_0 = \delta$. 
For $k\geq 1$, set $r_k= 3^{-k} r$. Applying the Harnack inequality repeatedly, we see that there exist $\alpha_k, \beta_k\in \R$ such that
$$
x_n + \alpha_k \leq u(x,t) \leq x_n+ \beta_k \quad \hbox{in}~Q_{r_k}(x_0,t_0),
$$
where $\beta_k - \alpha_k \leq (3 \bar c)^k (4\delta)r_k $, provided $(3\bar c)^k  (4\delta) \leq  \delta_0 $. 
Let $\bar k \geq 1$ and $\gamma \in (0,1)$ be such that $r_{\bar k+1} \leq  2\delta/\delta_0 < r_{\bar k}$ and  $\bar c=3^{-\gamma}$.  Then
$$
\underset{Q_{r_k}(x_0,t_0)} \osc \ u_\delta \leq \frac{\beta_k-\alpha_k}{\delta}\leq  4 r (3^{-\gamma k}) \quad \hbox{for all}~k=0, \hdots, \bar k.$$
If $(x,t) \in Q_r(x_0,t_0)$ satisfies $r_{k+1} \leq d_p ((x,t), (x_0,t_0))< r_k$ for some $k$ as above, then 
\begin{align*}
|u_\delta (x,t) - u_\delta (x_0, t_0)| 
\leq \underset{Q_{r_k}(x_0,t_0)} \osc \ u_\delta  
\leq 4 r^{1-\gamma} 3^\gamma  d_p((x,t), (x_0,t_0))^\gamma.
\end{align*}
\end{proof}

\section{Linear transmission problems} \label{sec:transmission}

We establish the regularity of viscosity solutions to linear transmission problems with flat interfaces that will be fundamental in Sections~\ref{sec:improvement} and \ref{sec:hodograph}. 
Consider the problem:
\begin{equation}
\begin{cases} \label{limitpb}
a_+v_t - a_{ij}(x,t) D_{ij} v = f_+(x,t) &  \hbox{in}~Q_{1}^+,\\
a_-v_t - a_{ij}(x,t) D_{ij} v = f_-(x,t) &  \hbox{in}~Q_{1}^-,\\
 v_n^+ = v_n^- & \hbox{on}~Q_{1}\cap \{x_n=0\},
\end{cases}
\end{equation}
where $v_n^\pm$ is the normal derivative of $v|_{\overline{Q}_1^\pm}$ and $a_{ij}(x,t) D_{ij} v := \tr(A(x,t)D^2v)$. Let $\sym$ be the set of $n\times n$ symmetric matrices. We assume that $A(x,t) \in \sym$ for all $(x,t)\in Q_1$ and  there are constants $0<\underline{a} \leq \bar{a}$ such that
\begin{equation} \label{ellipt}
\underline{a} |\xi|^2 \leq a_{ij}(x,t) \xi_i \xi_j \leq \bar{a} |\xi|^2 \quad \hbox{for all}~(x,t)\in Q_1, \ \xi\in \Rn.
\end{equation}
We will also assume that
\begin{equation} \label{rhs}
a_{ij}, f_\pm\in C(Q_1) \quad \hbox{for}~ i, j=1,\hdots, n.
\end{equation}

\begin{defn} 
We call $P(x,t)$ a \textit{piecewise} quadratic polynomial if 
\begin{align*}
P(x,t) &= \tfrac{1}{2} x^T A(x_n) x + b \cdot x + c t + d,\\
A(x_n) &= A^+ \chi_{\{x_n\geq 0\}} + A^- \chi_{\{x_n<0\}},
\end{align*}
where $A^\pm\in \sym$ satisfies that $A_{ij}^+=A_{ij}^-$ for all $1\leq i, j\leq n$, except possibly for $i=j=n$, $b$ is a vector in $\R^{n}$, and $c$ and $d$ are real numbers. 
\end{defn}

\begin{defn}\label{def:viscflat}
We say that $v\in C(Q)$ is a \textit{viscosity subsolution} to \eqref{limitpb}--\eqref{rhs} if whenever a piecewise quadratic polynomial $P$ touches $v$ from above at $(x_0,t_0)\in Q_1$, the following holds:
\begin{enumerate}[$(i)$]
\item If $(x_0,t_0)\in Q_1^+$, then $a_+c - a_{ij}(x_0,t_0) A_{ij}^+ \leq f_+(x_0,t_0)$. 
\item  If $(x_0,t_0)\in Q_1^-$, then $a_-c - a_{ij}(x_0,t_0) A_{ij}^- \leq f_-(x_0,t_0)$.
\item  If $(x_0,t_0)\in Q_1\cap\{x_n=0\}$, then 
\begin{align*} 
\min\big (a_+c - a_{ij}(x_0,t_0) A_{ij}^+ - f_+(x_0,t_0), \
 a_- c - a_{ij}(x_0,t_0) A_{ij}^-  - f_-(x_0,t_0)\big) \leq 0.
\end{align*}
\end{enumerate}
We say that $v$ is a \textit{viscosity supersolution} if whenever a piecewise quadratic polynomial $P$ touches $v$ from below at $(x_0,t_0)\in Q_1$, then $(i)$ and $(ii)$ hold reversing the inequalities, and

\begin{enumerate}[$(i)$]
\item[$(iii)$]  If $(x_0,t_0)\in Q_1\cap\{x_n=0\}$, then 
\begin{align*} 
\max\big (a_+c - a_{ij}(x_0,t_0) A_{ij}^+ - f_+(x_0,t_0), \ a_- c - a_{ij}(x_0,t_0) A_{ij}^-  - f_-(x_0,t_0)\big) \geq 0.
\end{align*}
\end{enumerate}
We call $v$ a \textit{viscosity solution} if it is both a viscosity subsolution and a viscosity supersolution.
\end{defn}

\begin{rem} 
We obtain an equivalent definition of viscosity solution by replacing $P$ with piecewise $C^2$ functions up to $\{x_n=0\}$ that are differentiable on $Q_1$.
In particular, observe that test functions already satisfy the transmission condition given in \eqref{limitpb}. In order to get meaningful information on the transmission interface, we require a stronger condition on $\{x_n=0\}$, i.e., $v$ must be a subsolution (or a supersolution) to at least one of the equations governing each side. This seems to be a natural definition, given the jump discontinuity of the global operator across this plane, and it is consistent with the following:  
\end{rem}

\begin{lem}
If $v\in C^2(Q_1^+\cup Q_1^-)\cap C^1(Q_1)$ is a classical solution of \eqref{limitpb}--\eqref{rhs}, then $v$ is also a viscosity solution in the sense of Definition~\ref{def:viscflat}. 
\end{lem}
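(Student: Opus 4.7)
The plan is to verify each of the three conditions in Definition~\ref{def:viscflat} for $v$ as a viscosity subsolution; the supersolution case is symmetric. Let $P$ be a piecewise quadratic polynomial touching $v$ from above at $(x_0, t_0) \in Q_1$, with $A^\pm$, $b$, $c$, $d$ as in the definition. For cases $(i)$ and $(ii)$, when $(x_0, t_0) \in Q_1^+$, the polynomial $P$ agrees near $(x_0, t_0)$ with the smooth quadratic $\tilde P^+(x, t) := \tfrac{1}{2} x^T A^+ x + b \cdot x + ct + d$, and $v \in C^2(Q_1^+)$. Standard parabolic touching gives $v_t(x_0, t_0) \geq c$ and $D^2 v(x_0, t_0) \leq A^+$ in the symmetric matrix order; combining this with ellipticity \eqref{ellipt} (so $a_{ij}(x_0, t_0)(A^+_{ij} - D_{ij} v(x_0, t_0)) \geq 0$) and the PDE at $(x_0, t_0)$ yields $(i)$. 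Case $(ii)$ is identical.

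The substantial case is $(iii)$, at a point $(x_0, t_0) \in Q_1 \cap \{x_n = 0\}$. I will argue by contradiction, assuming
\begin{equation*}
a_+ c - a_{ij}(x_0, t_0) A^+_{ij} - f_+(x_0, t_0) > 0.
\end{equation*}
Set $g := v - \tilde P^+$ on $\overline{Q_r^+(x_0, t_0)}$. Using the structural hypothesis $A^+_{ij} = A^-_{ij}$ for $(i, j) \neq (n, n)$, the gradient $\nabla P$ extends continuously across $\{x_n = 0\}$: the only candidate for a jump, the derivative of $\tfrac{1}{2} A_{nn}(x_n) x_n^2$, is $A_{nn}(x_n)\, x_n$ and vanishes at $x_n = 0$. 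Hence $P \in C^1(Q_1)$ and $\nabla P = \nabla \tilde P^+$ on $\{x_n = 0\}$. Combined with $v \in C^1(Q_1)$ and the touching equality, this forces $g(x_0, t_0) = 0$ and $\nabla g(x_0, t_0) = 0$. From the PDE for $v$ in $Q_1^+$,
\begin{equation*}
a_+ g_t - a_{ij} D_{ij} g = f_+ + a_{ij} A^+_{ij} - a_+ c \quad \text{in } Q_r^+(x_0, t_0),
\end{equation*}
which, by continuity of $a_{ij}$ and $f_+$ and the contradiction hypothesis, is strictly negative in a smaller one-sided cylinder $Q_{r'}^+(x_0, t_0)$. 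Thus $-g \geq 0$ is a strict classical supersolution of a uniformly parabolic equation on $Q_{r'}^+(x_0, t_0)$, vanishing at the boundary point $(x_0, t_0)$.

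To close the argument I apply a Hopf lemma. Choose $\rho \leq r'/2$ and $\eta$ large enough (depending on $a_+$ and the ellipticity constants) so that $E := E_{\eta, \rho}(x_0 + \rho e_n, t_0) \subset Q_{r'}^+(x_0, t_0)$ with $(x_0, t_0) \in \partial E$ and $|y - x_0| = \rho$ for $y := x_0 + \rho e_n$. The strong maximum principle for strict supersolutions precludes interior zeros of $-g$ in $E$, so Lemma~\ref{Hopf}, in its standard version for the uniformly parabolic operator $a_+ \partial_t - a_{ij} D_{ij}$ with continuous coefficients (see \cite{L}), applies with inward normal $\nu = e_n$ and yields $\partial_n(-g)(x_0, t_0) > 0$. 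This contradicts $\nabla g(x_0, t_0) = 0$, so the ``$+$'' entry of the minimum in $(iii)$ is $\leq 0$, as required.

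The main obstacle is precisely case $(iii)$: because $v$ is only $C^1$ across the interface $\{x_n = 0\}$, there is no direct interior second-derivative comparison available. What makes the argument work is the rigidity built into the class of test polynomials, namely that $P$ may jump only in the $(n,n)$ entry of $A$. This forces $P \in C^1(Q_1)$ and makes $\tilde P^+$ have the same gradient at $(x_0, t_0)$ as $P$, so that the $C^1$ touching condition exactly kills the normal derivative of $g$ from the $+$ side, enabling the Hopf contradiction. The mild technical overhead lies in citing the Hopf lemma in the appropriate variable-coefficient form and verifying the containment $E \subset Q_{r'}^+(x_0, t_0)$.
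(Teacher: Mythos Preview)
Your proof is correct but takes a genuinely different route from the paper's. The paper handles case $(iii)$ by a perturbation trick: it replaces $P$ with $P_\varepsilon = P + \varepsilon|x_n|$, which has a corner along $\{x_n=0\}$. Since $v\in C^1(Q_1)$, the shifted polynomial $P_\varepsilon - c_\varepsilon$ (with $c_\varepsilon$ chosen so it still touches $v$) cannot touch at the interface, so the touching point lands in one of the open half-cylinders, where the classical interior comparison applies directly; letting $\varepsilon\to 0$ and using the continuity of $a_{ij}$ and $f_\pm$ yields the required inequality for that side. Your argument instead fixes the touching point and applies the Hopf lemma on the $+$ side to $-g=\tilde P^+-v$, using that $P\in C^1$ (from the structural constraint $A^+_{ij}=A^-_{ij}$ for $(i,j)\neq(n,n)$) together with $v\in C^1$ to force $\nabla g(x_0,t_0)=0$, which Hopf then contradicts.

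Both approaches hinge on $v\in C^1(Q_1)$, but use it differently: the paper exploits it to exclude touching at a corner, while you exploit it to kill the normal derivative and enable Hopf. Your approach actually proves a slightly stronger fact (each entry of the minimum is $\leq 0$, not just the minimum), at the cost of invoking a variable-coefficient Hopf lemma rather than the constant-coefficient version stated as Lemma~\ref{Hopf}. The paper's perturbation argument is more elementary and self-contained; yours is more direct once one accepts the general Hopf principle.
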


\begin{proof}
Assume that $v$ satisfies \eqref{limitpb} in the classical sense. We only need to check that $v$ satisfies $(iii)$ in Definition~\ref{def:viscflat}. Suppose that $P$ is a piecewise quadratic polynomial that touches $v$ from below at $(x_0, t_0)\in Q_1 \cap \{x_n=0\}$. We have
$$
P(x_0,t_0)= v(x_0,t_0) \quad \hbox{and} \quad P(x,t)\leq v(x,t)~\hbox{for all}~(x,t)\in Q_r(x_0,t_0).
$$

Given $\vep\in (0,r)$ small, let $P_\vep = P + \vep|x_n|$. Define 
$$
c_\vep = \max_{Q_\vep(x_0,t_0)} ( P_\vep -v) =  P_\vep (x_\vep, t_\vep) - v(x_\vep, t_\vep).
$$
Then $P_\vep-c_\vep$ touches $v$ from below at $(x_\vep, t_\vep)\in Q_\vep(x_0,t_0)$. Since $v$ is differentiable, it follows that $(x_\vep, t_\vep) \notin \{x_n=0\}$. Without loss of generality, assume that $(x_\vep, t_\vep) \in Q_1^+$. Using that $v\in C^2(Q_1^+)$, we have $ P_t = \partial_t P_\vep \geq v_t$ and $D_{ij} P^+= D_{ij}P^+_\vep \leq D_{ij} v$ at the point $(x_\vep, t_\vep)$, so
$$
a_+P_t - a_{ij}(x_\vep, t_\vep) D_{ij}P^+ \geq  a_+ v_t - a_{ij}(x_\vep, t_\vep) D_{ij}v^+= f_+(x_\vep,t_\vep),
$$
since $v$ is a classical solution. Moreover, $(x_\vep, t_\vep)\to(x_0,t_0)$ as $\vep\to 0$. Passing to the limit in the above inequality and using the continuity assumptions given in \eqref{rhs}, we conclude that
\begin{align*} 
\max\big (a_+P_t - a_{ij}(x_0,t_0) D_{ij}P^+ - f_+(x_0,t_0), \ a_- P_t - a_{ij}(x_0,t_0) D_{ij}P^-  - f_-(x_0,t_0)\big) \geq 0.
\end{align*}
\end{proof}

We recall the definition of the \textit{Pucci's extremal operators} in \cite{CC}. Given $0<\lambda \leq \Lambda$, denote by $\A_{\lambda,\Lambda}$ the set of matrices in $\sym$ whose eigenvalues belong to $[\lambda, \Lambda]$.  For $M\in \sym$, define
\begin{align*}
\M_{\lambda,\Lambda}^-(M) & := \lambda \sum_{e_i >0} e_i + \Lambda \sum_{e_i<0} e_i = \inf_{ A \in \A_{\lambda, \Lambda}} \tr(AM),  \\
\M_{\lambda,\Lambda}^+(M) &:= \Lambda \sum_{e_i >0} e_i + \lambda \sum_{e_i<0} e_i= \sup_{A\in \A_{\lambda, \Lambda}} \tr(AM),
\end{align*}
where $\{e_i\}_{i=1}^n$ are the eigenvalues of $M$.

\begin{lem}
Let $\lambda= \underline{a} / \max(a_+, a_-)$ and $\Lambda= \bar{a}/\min(a_+,a_-)$. Define the functions $\bar f= \max(f_+/a_+,f_-/a_-)$ and $\underline{f}=\min(f_+/a_+,f_-/a_-)$.
If $v\in C(Q)$ is a viscosity solution to \eqref{limitpb}--\eqref{rhs}, then 
$$
\underline{f} + \M_{\lambda,\Lambda}^-(D^2 v)  \leq   v_t \leq \M_{\lambda,\Lambda}^+(D^2 v) +  \bar{f} \quad \hbox{in}~Q_1, 
$$
in the usual viscosity sense.
\end{lem}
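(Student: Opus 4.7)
The plan is to reduce the statement to a direct definition-chasing exercise. The Pucci inequalities are understood in the classical viscosity sense, i.e.\ tested with $C^2$ functions; by the standard Taylor approximation trick, it suffices to consider quadratic polynomial test functions $P(x,t) = \tfrac{1}{2}(x-x_0)^T A (x-x_0) + b\cdot(x-x_0) + c(t-t_0) + v(x_0,t_0)$. Such a $P$ is exactly a piecewise quadratic polynomial in the sense of the preceding definition with $A^+ = A^- = A$, so the transmission hypothesis is trivially met and Definition~\ref{def:viscflat} applies.

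The whole content of the lemma sits in a single algebraic observation about the choice of $\lambda$ and $\Lambda$. Since $\lambda = \underline{a}/\max(a_+, a_-) \le \underline{a}/a_\pm$ and $\Lambda = \bar{a}/\min(a_+, a_-) \ge \bar{a}/a_\pm$, the renormalized coefficient matrix $B_\pm := A(x_0,t_0)/a_\pm$ has eigenvalues in $[\lambda, \Lambda]$, so $B_\pm \in \A_{\lambda,\Lambda}$. By the variational characterization of Pucci's operators this yields
\[
  \M^-_{\lambda,\Lambda}(A) \le \tfrac{1}{a_\pm}\,a_{ij}(x_0,t_0) A_{ij} \le \M^+_{\lambda,\Lambda}(A).
\]
For the upper bound, suppose $P$ touches $v$ from above at $(x_0,t_0)$. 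If $(x_0,t_0) \in Q_1^+$ or $Q_1^-$, cases $(i)$ or $(ii)$ of Definition~\ref{def:viscflat} give $a_\pm c - a_{ij}(x_0,t_0) A_{ij} \le f_\pm(x_0,t_0)$; dividing by $a_\pm$ and inserting the estimate above yields $c \le \M^+_{\lambda,\Lambda}(A) + f_\pm/a_\pm \le \M^+_{\lambda,\Lambda}(A) + \bar f(x_0,t_0)$. At an interface point $(x_0,t_0) \in Q_1 \cap \{x_n = 0\}$, condition $(iii)$ asserts only that the minimum of the two phase expressions is nonpositive, so at least one of the inequalities $a_\pm c - a_{ij}(x_0,t_0) A_{ij} \le f_\pm(x_0,t_0)$ holds, and the same computation produces the bound using the corresponding term of $\bar f = \max(f_+/a_+, f_-/a_-)$.

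The lower bound $v_t \ge \underline{f} + \M^-_{\lambda,\Lambda}(D^2 v)$ is proved by the mirror argument using the supersolution side of Definition~\ref{def:viscflat}: conditions $(i)$, $(ii)$ (reversed) yield $c \ge \M^-_{\lambda,\Lambda}(A) + f_\pm/a_\pm \ge \M^-_{\lambda,\Lambda}(A) + \underline f(x_0,t_0)$ at interior points, while condition $(iii)$ (with $\max \ge 0$) guarantees at least one of the two reversed inequalities is available at interface points. There is no real obstacle here; the key structural point the proof highlights is that the one-sided interface conditions $(iii)$ in Definition~\ref{def:viscflat} are calibrated precisely so that one may always fall back to a scalar Pucci inequality by picking the phase that satisfies the relevant inequality, without needing both sides simultaneously.
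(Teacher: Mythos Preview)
Your proposal is correct and follows essentially the same approach as the paper: test with a quadratic polynomial, use the subsolution property to obtain one of the two phase inequalities, divide by $a_\pm$, and recognize $A(x_0,t_0)/a_\pm\in\A_{\lambda,\Lambda}$ so that the Pucci sup characterization yields the bound. Your write-up is more explicit than the paper's (you spell out that a quadratic $P$ is a piecewise quadratic with $A^+=A^-$, separate the interior and interface cases, and treat the lower bound), but there is no substantive difference in the argument.
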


\begin{proof}
It is enough to prove the second inequality:
$$v_t \leq \M_{\lambda,\Lambda}^+(D^2 v) + \bar f \quad \hbox{in} ~Q_1,
$$ 
in the viscosity sense, i.e., we need to show that if a quadratic polynomial, 
$$P(x,t)=\tfrac{1}{2} x^T M x + b \cdot x + c t + d,$$
touches $v$ from above at $(x_0,t_0)\in Q_1$, then
$$
c \leq  \M_{\lambda,\Lambda}^+(M) + \bar f(x_0,t_0). 
$$
 Indeed, suppose that $P$ touches $v$ from above at $(x_0,t_0)\in Q_1$. Then
$$
a_+ c  - a_{ij}(x_0,t_0) M_{ij} \leq  f_+(x_0,t_0) \quad \hbox{or} \quad a_- c  - a_{ij}(x_0,t_0) M_{ij} \leq  f_-(x_0,t_0)
 $$
since $v$ is a viscosity subsolution to \eqref{limitpb}. We get that
$$
c \leq \sup_{ A \in \A_{\lambda, \Lambda} } \tr(A M) + \tfrac{1}{a_\pm} f_\pm (x_0,t_0) \leq  \M_{\lambda,\Lambda}^+(M) + \bar f(x_0,t_0). 
$$
\end{proof}

As a consequence of the previous lemma, we have that viscosity solutions to \eqref{limitpb}--\eqref{rhs} satisfy the maximum principle given in \cite[Theorem~3.14]{W1} and the interior H\"{o}lder estimates in \cite[Theorem~4.19]{W1}. More precisely:

\begin{thm} \label{mp-holder}
Let $f_\pm\in C({Q_1})\cap L^\infty(Q_1)$. Assume that $v\in C({Q_1})$ is a viscosity solution to \eqref{limitpb}--\eqref{rhs} with $v=g \in C(\partial_p Q_1)$. Then $v\in C^{0,\beta}_{loc}(Q_1)$ and
\begin{equation}
\|v\|_{L^\infty(Q_1)} + [v]_{C^{0,\beta}(\overline{Q_{1/2}})} \leq C \Big( \sup_{\partial_p Q_1} |g| + \|f_+\|_{L^{n+1}(Q_1)} + \|f_-\|_{L^{n+1}(Q_1)}\Big),
\end{equation}
where $\beta\in (0,1)$ and $C>0$ depend on $n$, $\underline a$, $\bar a$, $a_+$, and $a_-$. 
\end{thm}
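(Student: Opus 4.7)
The plan is to reduce Theorem~\ref{mp-holder} to the standard Krylov--Safonov--Wang theory for uniformly parabolic fully nonlinear equations by invoking the preceding lemma, which has already done the only nontrivial work.

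First, the preceding lemma tells us that any viscosity solution $v$ to \eqref{limitpb}--\eqref{rhs} satisfies the two Pucci inequalities
$$\underline{f} + \M_{\lambda,\Lambda}^-(D^2 v) \leq v_t \leq \M_{\lambda,\Lambda}^+(D^2 v) + \bar{f}$$
in $Q_1$ in the usual (non-transmission) parabolic viscosity sense, with ellipticity constants $\lambda = \underline{a}/\max(a_+,a_-)$ and $\Lambda = \bar{a}/\min(a_+,a_-)$ depending only on $\underline{a}, \bar{a}, a_+, a_-$. In particular, $v$ lies in the extremal classes of subsolutions and supersolutions to which Wang's interior theory applies directly, and the transmission interface $\{x_n = 0\}$ becomes invisible to $\M^\pm$.

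Second, I would apply the Alexandrov--Bakelman--Pucci-type maximum principle [W1, Theorem~3.14] to each of these extremal inequalities to control $\|v\|_{L^\infty(Q_1)}$ by $\sup_{\partial_p Q_1}|g|$ plus $\|\bar f\|_{L^{n+1}(Q_1)}$ (for the subsolution side) and $\|\underline f\|_{L^{n+1}(Q_1)}$ (for the supersolution side). The pointwise bound $|\bar f| + |\underline f| \leq (a_+^{-1} + a_-^{-1})(|f_+| + |f_-|)$ then rewrites the right-hand side in terms of $\|f_\pm\|_{L^{n+1}(Q_1)}$, at the cost of constants depending on $a_\pm$. This gives the $L^\infty$ part of the estimate.

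Third, I would apply the interior H\"older estimate [W1, Theorem~4.19] to the same pair of Pucci inequalities, obtaining $\beta \in (0,1)$ and $C > 0$ depending only on $n, \lambda, \Lambda$ such that
$$[v]_{C^{0,\beta}(\overline{Q_{1/2}})} \leq C\bigl(\|v\|_{L^\infty(Q_1)} + \|\bar f\|_{L^{n+1}(Q_1)} + \|\underline f\|_{L^{n+1}(Q_1)}\bigr).$$
Combining with the $L^\infty$ bound from the previous step and again estimating $\bar f, \underline f$ by $f_\pm$ yields the stated inequality, with $\beta$ and $C$ depending only on $n, \underline a, \bar a, a_+, a_-$. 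There is no real obstacle at this stage: once the preceding lemma has converted the transmission viscosity notion on $\{x_n = 0\}$ (item $(iii)$ in Definition~\ref{def:viscflat}) into membership in the standard extremal classes, the classical interior regularity theory for uniformly parabolic equations applies off the shelf and the transmission structure plays no further role.
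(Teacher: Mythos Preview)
Your proposal is correct and matches the paper's own argument exactly: the paper does not give a separate proof of Theorem~\ref{mp-holder} but simply states it as a direct consequence of the preceding lemma (reducing to the Pucci extremal inequalities) together with the cited maximum principle \cite[Theorem~3.14]{W1} and interior H\"older estimate \cite[Theorem~4.19]{W1}. Your write-up just makes the bookkeeping of $\bar f$, $\underline f$ in terms of $f_\pm$ explicit.
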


\subsection{Equations with constant coefficients and right-hand side}

To establish the regularity of viscosity solutions to \eqref{limitpb}, we consider first the model problem:

\begin{thm} \label{thm:limitpb}
 Let $a_+,a_->0$ and $ \mu_+, \mu_-\in \R$. Let $v$ be a viscosity solution to 
 \begin{equation}\label{pbv}
 \begin{cases}
 a_+ v_t - \Delta v = \mu_+ & \hbox{in}~Q_1^+,\\
  a_- v_t - \Delta v = \mu_- & \hbox{in}~Q_1^-,\\
  v_n^+ = v_n^- & \hbox{on}~Q_1\cap \{x_n=0\},
  \end{cases}
 \end{equation}
such that $|v|\leq 1$ in $Q_1$.
Then $v(x,t)$ is locally smooth in $x'$ and $t$, and $v\in C_{loc}^{1,1}(Q_1)$. In particular, for any $(x_0,t_0) \in Q_{1/2}$, there exists a linear polynomial $l(x)= b\cdot (x-x_0) + c$, with $|b|+|c| \leq C_0$, such that if $Q_{2r} (x_0,t_0)\subset Q_{1/2}$, then
$$
\sup_{Q_r(x_0,t_0)} |v-l| \leq C_0 r^2 \quad 
$$
for some $C_0>0$ depending on $n$, $a_+$, $a_-$, $\mu_+$, and $\mu_-$.
\end{thm}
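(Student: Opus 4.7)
My strategy is to first reduce to the homogeneous case by subtracting an explicit piecewise-quadratic polynomial, then to construct a smooth classical solution via direct methods and identify it with $v$ through uniqueness of viscosity solutions, and finally to read off the linear polynomial approximation from the resulting $C^{1,1}$ bound.

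\textbf{Reduction.} I would set
\[
q(x) := -\tfrac{\mu_+}{2}(x_n^+)^2 - \tfrac{\mu_-}{2}(x_n^-)^2,
\]
which is a piecewise-quadratic polynomial in the paper's sense (its matrix $A^\pm$ differs only in the $(n,n)$-entry). It satisfies $q_t\equiv 0$, $-\Delta q=\mu_\pm$ on each side, and $q_n^\pm(x',0)=0$, hence is a classical, and therefore viscosity, solution of \eqref{pbv}. Thus $\tilde v:=v-q$ is a viscosity solution of the homogeneous version ($\mu_\pm=0$) of \eqref{pbv} with $\|\tilde v\|_{L^\infty(Q_1)}\le 1+C(\mu_+,\mu_-)$. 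Since $q$ itself already has the regularity claimed for $v$, it suffices to prove the theorem for $\tilde v$.

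\textbf{Classical construction and uniqueness.} For the homogeneous problem, a classical smooth solution $V$ matching any smooth boundary data on $\partial_p Q_{3/4}$ can be produced by standard parabolic diffraction theory, or more concretely by a tangential Fourier/Laplace-in-$t$ transform, which reduces the problem to an explicit ODE in $x_n$ on each half, coupled at $x_n=0$ by continuity of value and of $V_n$. The resulting $V$ is automatically smooth in $(x',t)$ (by tangential translation invariance of the problem) and $C^{1,1}$ across $\{x_n=0\}$, with an explicit jump
\[
V_{nn}^+ - V_{nn}^- = (a_+-a_-)V_t \quad \hbox{on}~\{x_n=0\}
\]
forced by the PDE. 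To transfer this regularity to $\tilde v$ I would invoke uniqueness of viscosity solutions: the Pucci-extremal reformulation of the preceding lemma gives comparison between sub/supersolutions, so approximating the boundary values of $\tilde v$ by smooth data on slightly smaller cylinders $Q_{3/4-\varepsilon}$ and passing to the limit using the Hölder stability of Theorem~\ref{mp-holder} identifies $\tilde v\equiv V$ in the interior. Tangential smoothness throughout $Q_{1/2}$ then follows by a covering/bootstrap, since every tangential derivative of $\tilde v$ solves the same homogeneous transmission problem on a smaller cylinder and so satisfies the same a priori bounds; this also gives $v_{nn}^\pm=a_\pm v_t-\Delta_{x'}v$ smooth in $(x',t)$ on each side, completing $C^{1,1}_{loc}$.

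\textbf{Polynomial approximation and main obstacle.} Given $v\in C^{1,1}_{loc}(Q_{1/2})$ with bounds depending only on $n,a_\pm,\mu_\pm$, I choose $l(x):=v(x_0,t_0)+\nabla v(x_0,t_0)\cdot(x-x_0)$. For $Q_{2r}(x_0,t_0)\subset Q_{1/2}$ and $(x,t)\in Q_r(x_0,t_0)$,
\[
|v(x,t)-l(x)|\le \tfrac12\|D^2 v\|_{L^\infty(Q_{1/2})}|x-x_0|^2+\|v_t\|_{L^\infty(Q_{1/2})}|t-t_0|\le C_0 r^2,
\]
since $|t-t_0|\le r^2$ in the parabolic cylinder, and the bound $|b|+|c|\le C_0$ is immediate from the $C^1$ estimate. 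The main obstacle I anticipate is the uniqueness step used to identify $\tilde v\equiv V$: Definition~\ref{def:viscflat} uses piecewise-quadratic test functions and one-sided $\min/\max$ conditions at the interface, so ruling out a positive maximum of $\tilde v-V$ sitting on $\{x_n=0\}$ requires producing interface test functions that detect contact from both sides of the operator simultaneously. This is a standard viscosity-theory maneuver, but its execution in the transmission setting (where one must exploit the Pucci-extremal inequalities rather than a single pointwise equation) is the technical crux of the argument.
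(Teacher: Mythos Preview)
Your approach is correct and shares the paper's architecture (construct a classical solution, identify it with $v$ via the maximum principle, read off the polynomial approximation from the $C^{1,1}$ bound), but differs in how the classical solution is produced. The paper regularizes the discontinuous coefficient: it takes smooth $a^j(x_n)$ interpolating between $a_-$ and $a_+$ on $|x_n|<1/j$ (and similarly smooths the right-hand side), solves $a^j w^j_t - \Delta w^j = f^j$ in $Q_{3/4}$ with boundary data $v$, and proves a short lemma showing that for such one-dimensional coefficients the interior $C^{1,1}$ norm of $w^j$ is bounded independently of the smoothness of $a^j$---the mechanism being exactly the one you describe in your bootstrap (differentiate tangentially and in $t$, then recover $D_{nn}w^j$ from the equation). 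Compactness yields a limit $w\in C^{1,1}_{loc}$ solving the transmission problem classically, and $w=v$ follows from Theorem~\ref{mp-holder}. Your reduction to $\mu_\pm=0$ via the explicit $q$ is a harmless simplification the paper omits; your Fourier/diffraction construction replaces the regularization-plus-compactness step, trading self-containedness for more external machinery but arriving at the same place. As for your anticipated obstacle: once $V$ is classical (hence a viscosity solution by the lemma preceding Theorem~\ref{mp-holder}), the difference $\tilde v - V$ is itself a viscosity solution of the homogeneous transmission problem with zero boundary data, and the Pucci-extremal lemma combined with Theorem~\ref{mp-holder} forces $\tilde v = V$ directly---the interface maximum you worry about is subsumed in the global Pucci inequality and needs no separate treatment.
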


To prove the theorem, we will approximate $v$ with functions that satisfy a uniformly parabolic equation with smooth coefficients. We need the following lemma.

\begin{lem} \label{lem:regsmooth}
Let $a: \R \to \R$ be a smooth function such that $0<a_-\leq a(s) \leq a_+$ for all $s\in \R$. 
Let $f: \R \to \R$ be a smooth bounded function. 
If $w$ is a bounded viscosity solution to
\begin{equation}\label{regeq}
a(x_n) w_t - \Delta w= f(x_n) \quad \hbox{in}~Q_1,
\end{equation}
then $w$ is locally smooth, and 
$$
\| w \|_{C^{1,1}(\overline{Q_{1/2}})} \leq C \big( \| w\|_{L^\infty(Q_1)} + \|f\|_{L^\infty(\R)}\big)
$$
for some constant $C>0$ depending on $n$, $a_+$, and $a_-$.
\end{lem}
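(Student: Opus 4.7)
The plan is to reduce the equation to a standard linear uniformly parabolic equation with smooth coefficients, apply classical interior Schauder theory, and then bootstrap by exploiting the translation invariance of the equation in the tangential variables $(x', t)$.

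First, since $a_- \leq a(x_n) \leq a_+$, I would divide through by $a(x_n)$ to rewrite the equation as
$$
w_t - \frac{1}{a(x_n)} \Delta w = \frac{f(x_n)}{a(x_n)} \quad \text{in } Q_1.
$$
The coefficient matrix $\frac{1}{a(x_n)}I$ and the source $f(x_n)/a(x_n)$ are smooth functions of $x_n$ alone, uniformly elliptic and bounded in terms of $a_+, a_-, \|f\|_{L^\infty(\R)}$. By the standard interior Schauder theory for linear uniformly parabolic equations with H\"older coefficients (see, e.g., \cite{L}), any bounded viscosity solution agrees with a classical $C^{2,\alpha}_{loc}$ solution and satisfies
$$
\|w\|_{C^{2,\alpha}(\overline{Q_{1/2}})} \leq C \big( \|w\|_{L^\infty(Q_1)} + \|f\|_{L^\infty(\R)}\big)
$$
for some $\alpha \in (0,1)$ and $C = C(n, a_+, a_-)$. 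In particular this already yields the stated $C^{1,1}$ bound on $\overline{Q_{1/2}}$.

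For local smoothness I would bootstrap via tangential difference quotients. Since $a$ and $f$ depend only on $x_n$, for any $h \in \R^{n-1}$ and any $\tau \in \R$ the translate $w(x'+h, x_n, t+\tau)$ solves the same equation; subtracting and dividing, the difference quotients in $x'$ and $t$ solve the \emph{homogeneous} version of the rewritten equation and are bounded in $L^\infty$ locally by the $C^{1,1}$-norm of $w$ just obtained. Applying the Schauder estimate to them and sending $h, \tau \to 0$ gives $D_{x'}w,\, w_t \in C^{2,\alpha}_{loc}$, and iterating produces $D_{x'}^k D_t^\ell w \in C^{2,\alpha}_{loc}$ for all $k, \ell \geq 0$. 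The remaining regularity in $x_n$ is recovered algebraically from the equation,
$$
\partial_{x_n}^2 w = a(x_n)\, w_t - \Delta_{x'} w - f(x_n),
$$
whose right-hand side is smooth in the tangential variables by the previous step and in $x_n$ by the smoothness of $a$ and $f$. Iterating this identity upgrades $w$ to $C^\infty_{loc}(Q_1)$.

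The main (modest) obstacle is the identification of the bounded viscosity solution of the rewritten equation with a classical solution; for linear uniformly parabolic equations with smooth coefficients this is standard, following from interior Schauder estimates applied to mollified problems combined with viscosity uniqueness, so no new idea is required beyond invoking the correct result.
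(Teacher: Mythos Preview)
There is a genuine gap in your first step. The interior Schauder estimate for a linear parabolic equation with coefficient matrix $\tfrac{1}{a(x_n)}I$ gives a constant that depends on the $C^{0,\alpha}$ seminorm of $\tfrac{1}{a(\cdot)}$, not merely on its ellipticity bounds $a_\pm$. Your claim that $C=C(n,a_+,a_-)$ is therefore unjustified. This is not a cosmetic issue: in the application of this lemma (the proof of Theorem~\ref{thm:limitpb}), one takes a sequence of smooth functions $a^j$ transitioning from $a_-$ to $a_+$ on an interval of length $2/j$, so $[a^j]_{C^{0,\alpha}}\to\infty$ as $j\to\infty$, and the whole point of the lemma is that the $C^{1,1}$ bound is \emph{uniform} along this sequence.

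The paper circumvents this by never invoking a Schauder-type estimate. Instead it uses only the Krylov--Safonov interior H\"older estimate, whose constant depends solely on the ellipticity constants $a_\pm$. Because the equation is translation-invariant in $x'$ and $t$, each derivative $D_{x'}^m D_t^l w$ solves the same homogeneous equation and inherits a $C^{0,\beta}$ bound; iterating as in \cite[Section~5.3]{CC} controls all such derivatives in $L^\infty$ by $\|w\|_{L^\infty}$ alone. The normal derivatives are then recovered algebraically: $D_{nn}w$ directly from the equation, and $D_{jn}w$ ($j<n$) by differentiating the identity $D_{nn}w=a(x_n)w_t-\Delta_{x'}w-f(x_n)$ in $x_j$ and integrating back in $x_n$. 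Your bootstrap-to-smoothness argument (Steps~2--3) is essentially the paper's argument and is fine, but it should be used for the quantitative $C^{1,1}$ bound itself, starting from the Krylov--Safonov estimate rather than from Schauder.
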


\begin{proof}
Since $a(x_n)$ and $f(x_n)$ are smooth and uniformly bounded, it is clear that $w$ is locally smooth. Hence, we only need to prove the $C^{1,1}$ estimate. It is enough to show that
\begin{equation} \label{c11est}
\|D^2 w\|_{L^\infty(Q_{3/4})} \leq C \big( \| w\|_{L^\infty(Q_1)} + \|f\|_{L^\infty(\R)}\big)
\end{equation}
for some constant $C>0$ depending on $n$, $a_+$, and $a_-$.
Note that any derivative of $w$ in the $x'$-direction and any time derivative satisfies \eqref{regeq} with zero right-hand side. Hence, carrying out the same procedure as in \cite[Section~5.3]{CC}, we get that
\begin{equation} \label{est:deriv}
    \|D_t^l D_x^m w\|_{L^\infty(Q_{7/8})} \leq C \|w\|_{L^\infty(Q_1)}
\end{equation}
for any integer $l\geq 0$ and multi-index $m=(m_1, \hdots,m_n)$ with $m_n= 0$, where $C>0$ is some constant depending on $n$, $a_+$, and $a_-$. To control the derivatives in the $x_n$-direction, we rewrite the equation in the following way:
\begin{equation} \label{eq:nn}
D_{nn} w(x,t) = a(x_n) w_t (x,t)- \sum_{i=1}^{n-1} D_{ii} w(x,t) + f(x_n) =: \tilde f(x,t).
\end{equation}
By the previous estimate, we get
\begin{equation} \label{est:nn}
    \| D_{nn} w \|_{L^\infty(Q_{7/8})} \leq  (a_-+ n-1)C \|w\|_{L^\infty(Q_1)} +\|f\|_{L^\infty(\R)}.
\end{equation}
Differentiating \eqref{eq:nn} with respect to $x_j$, $j<n$, and integrating with respect to $x_n$, we have 
$$
 D_{jn}w(x',x_n,t)  = \int_c^{x_n} D_j \tilde f(x', y, t) \, dy +  D_{jn}w(x',c,t)
$$
for some $c\in (-7/8, -3/4)$ and any $(x,t)\in Q_{3/4}$. 
Then averaging in $c$ and using \eqref{est:deriv} we get
\begin{equation} \label{est:jn}
    \| D_{jn} w \|_{L^\infty(Q_{3/4})} \leq  C \|w\|_{L^\infty(Q_1)}
\end{equation}
for some constant $ C>0$ depending on $n$, $a_+$, and $a_-$.

Therefore, combining the estimates in \eqref{est:nn} and \eqref{est:jn}, we conclude \eqref{c11est}.
\end{proof}

\begin{proof}[Proof of Theorem~\ref{thm:limitpb}]
Without loss of generality, assume that $0<a_-< a_+$ and $\mu_-\leq \mu_+$.
Let $a^j: \R\to\R$ be a smooth function satisfying $a^j(s)=a_-$ if $s<-\tfrac{1}{j}$, $a^j(s)=a_+$ if $s>\tfrac{1}{j}$, and $a_-\leq  a^j(s)\leq a_+$ for all $s\in \R$. Similarly, let $f^j : \R \to \R$ be a smooth function satisfying $f^j(s)=\mu_-$ if $s<-\tfrac{1}{j}$, $f^j(s)=\mu_+$ if $s>\tfrac{1}{j}$, and $\mu_- \leq  f^j(s)\leq \mu_+$ for all $s\in \R$.
Let $w^j$ be the viscosity solution to 
$$
\begin{cases}
a^j(x_n) w^j_t - \Delta w^j=f^j(x_n) & \hbox{in}~Q_{3/4},\\
w^j = v & \hbox{on}~\partial_p Q_{3/4}.
\end{cases}
$$
By Lemma~\ref{lem:regsmooth}, $w^j$ is locally smooth and for any $r\in (0, 3/4)$, we have
\begin{equation} \label{est:c11wj}
    \| w^j \|_{C^{1,1}(\overline{Q_r})} \leq C(r)
\end{equation}
for some constant $C(r)>0$ independent of $j$.
By compactness, up to a subsequence, $w^j \to w$ as $j\to\infty$ in $C_{loc}^{1,\beta}$ for any $\beta \in (0,1)$. Furthermore, $w\in C_{loc}^{1,1}(Q_{3/4})\cap C(\overline{Q_{3/4}})$ and  
\begin{equation*}
\begin{cases}
a_+w_t - \Delta w = \mu_+ &  \hbox{in}~Q_{3/4}^+,\\
a_-w_t - \Delta w= \mu_- &  \hbox{in}~Q_{3/4}^-,\\
 w=v & \hbox{on}~\partial_p Q_{3/4},\\
\end{cases}
\end{equation*}
in the classical sense. 
By the maximum principle (Theorem~\ref{mp-holder}), applied to $w-v$, it follows that $w=v$ in $Q_{3/4}$. 
 Hence, $v\in C^{1,1}_{loc}(Q_{3/4})$ and by \eqref{est:c11wj}, we get
  $$\|v\|_{C^{1,1}(\overline{Q_{1/2}})} \leq C_0,$$
 where $C_0>0$ is a constant depending on $n$, $a_+$, $a_-$, $\mu_+$, and $\mu_-$.
  In particular, for any $(x_0,t_0)\in Q_{1/2}$, there exists a linear polynomial, $l(x)= \nabla v(x_0,t_0) \cdot (x-x_0) + v(x_0,t_0)$, with $|\nabla v(x_0,t_0)|+|v(x_0,t_0)| \leq C_0$, such that
$$
\sup_{Q_r(x_0,t_0)} |v-l| \leq C_0 r^2 
$$
for $r>0$ small enough so that $Q_{2r}(x_0,t_0)\subset Q_{1/2}$.
\end{proof}

\subsection{Equations with variable coefficients and right-hand side}

We prove $C^{2,\alpha}$ estimates up to the flat interface for viscosity solutions to \eqref{limitpb}--\eqref{rhs}. 
 
 \begin{thm} \label{thm:bootstrap}
 Fix $\alpha\in (0,1)$. 
 Assume that $a_{ij}, f_\pm \in C_{loc}^{0,\alpha}(Q_1)$ for all $i,j=1,\hdots,n$.
 If $w$ is a bounded viscosity solution to \eqref{limitpb}--\eqref{ellipt}, then $$w\in C^{2,\alpha}(\overline{Q_{1/2}^+})\cap C^{2,\alpha}(\overline{Q_{1/2}^-})$$  
 and the following estimate holds:
 $$
 \|w\|_{C^{2,\alpha}(\overline{Q_{1/2}^+})} +  \|w\|_{C^{2,\alpha}(\overline{Q_{1/2}^-})} \leq C \big( \|w\|_{L^\infty(Q_1)} + \|f_+\|_{C^{0,\alpha}(\overline{Q_{3/4}})} + \|f_-\|_{C^{0,\alpha}(\overline{Q_{3/4}})}\big), 
 $$
 where $C>0$ depends on $n$, $a_+$, $a_-$, and $\|a_{ij}\|_{C^{0,\alpha}(\overline{Q_{3/4}})}$.
 \end{thm}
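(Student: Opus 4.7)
The plan is to prove Theorem~\ref{thm:bootstrap} by the standard compactness-iteration Schauder method of Caffarelli, with piecewise quadratic polynomials as the natural approximants dictated by the jump in $D_{nn}w$ across the interface. Away from $\{x_n=0\}$, the equation is uniformly parabolic with $C^{0,\alpha}$ data, so classical interior $C^{2,\alpha}$ Schauder estimates apply directly; the content of the theorem lies in the boundary estimate at interface points. Combined with Campanato's characterization from Section~\ref{sec:prelim}, a pointwise $C^{2,\alpha}$ estimate at every interface point delivers the stated $C^{2,\alpha}$ regularity on each of $\overline{Q_{1/2}^\pm}$.

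The heart of the argument is a one-step improvement lemma, proved by compactness: for every $\alpha\in(0,1)$ there exist $\rho\in(0,1)$ and $\epsilon_0>0$ (depending on $n, a_\pm, \underline a, \bar a, \alpha$) such that if $w$ is a viscosity solution of \eqref{limitpb}--\eqref{rhs} with $\sup_{Q_1}|w|\leq 1$, $a_{ij}(0,0)=\delta_{ij}$, $[a_{ij}]_{C^{0,\alpha}}\leq \epsilon_0$, and $[f_\pm]_{C^{0,\alpha}}+|f_\pm(0,0)|\leq 1$, then there is a piecewise quadratic polynomial $P$ with universally bounded coefficients satisfying the frozen equation $a_\pm P_t-\Delta P=f_\pm(0,0)$ on each side, for which $\sup_{Q_\rho}|w-P|\leq \rho^{2+\alpha}$. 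If the lemma failed, Theorem~\ref{mp-holder} would give a uniformly H\"older-bounded violating sequence $w_k$, converging along a subsequence to a limit $w_\infty$ that solves a constant-coefficient, constant-right-hand-side model problem \eqref{pbv}. Theorem~\ref{thm:limitpb} ensures $w_\infty\in C^{1,1}_{loc}$ and is smooth in $x'$ and $t$; the equation on each side then algebraically expresses $D_{nn}w_\infty$ as a smooth function of $x'$ and $t$, and a standard parabolic boundary regularity argument with smooth coefficients and smooth boundary data promotes $w_\infty$ to $C^\infty(\overline{Q_{3/4}^\pm})$ separately on each side. Its piecewise quadratic Taylor polynomial at the origin therefore gives an error of order $r^3$ in $Q_r$, contradicting the failure of the lemma once $\rho$ is taken small and $k$ large.

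The iteration and conclusion are then routine. Setting $\tilde w(x,t)=\rho^{-(2+\alpha)}(w-P)(\rho x,\rho^2 t)$, the rescaled coefficient $\tilde a_{ij}(x,t)=a_{ij}(\rho x,\rho^2 t)$ has $C^{0,\alpha}$ seminorm reduced by $\rho^\alpha$, and the new right-hand side, a combination of $f_\pm(\rho\cdot,\rho^2\cdot)-f_\pm(0,0)$ with the commutator $(a_{ij}(\rho\cdot,\rho^2\cdot)-\delta_{ij})D_{ij}P$, enjoys the same smallness at the new scale; the exponent $2+\alpha$ is precisely what is needed to close the iteration. Applying the lemma inductively yields piecewise quadratic polynomials $P_k$ with $\sup_{Q_{\rho^k}}|w-P_k|\leq C\rho^{k(2+\alpha)}$, whose coefficients form a geometric Cauchy sequence; their limit $P_\infty$ provides the pointwise $C^{2,\alpha}$ expansion at the origin. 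Translating the argument to every interface point in $Q_{1/2}$ and combining with interior Schauder yields the full estimate. I expect the main obstacle to be the compactness step, specifically verifying that $w_\infty$ is indeed a viscosity solution of the model problem in the sense of Definition~\ref{def:viscflat} (tracking how the piecewise-quadratic test class behaves under uniform convergence of the data, especially the transmission test condition $(iii)$), together with the precise parabolic scaling bookkeeping that forces the exponent $2+\alpha$ in the rescaling.
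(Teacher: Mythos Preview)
Your overall plan matches the paper's: reduce Theorem~\ref{thm:bootstrap} to a pointwise interface estimate (Theorem~\ref{thm:pointwiseest} in the paper), prove a one-step improvement lemma, and iterate with the parabolic rescaling $\rho^{-(2+\alpha)}(w-P)(\rho\,\cdot,\rho^2\,\cdot)$. The paper, however, does \emph{not} prove the improvement step by compactness. It proves a direct approximation (Lemma~\ref{lem1}): solve the constant-coefficient model problem \eqref{pbv} in $Q_{3/4}$ with boundary values $w$, and bound $\|w-v\|_{L^\infty(Q_{1/2})}\leq C\varepsilon^{\beta/4}$ via the maximum principle and H\"older estimate of Theorem~\ref{mp-holder} together with the interior $C^{1,1}$ bound on $v$ from Theorem~\ref{thm:limitpb}. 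The polynomial $P$ in Lemma~\ref{lem2} is then simply the piecewise second-order Taylor expansion of $v$ at the origin. This route entirely bypasses the stability-of-viscosity-solutions issue you flag as the main obstacle.

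Your compactness alternative is viable, but there is a concrete gap in the improvement lemma as you state it. You impose $[a_{ij}]_{C^{0,\alpha}}\leq\epsilon_0$ but only $[f_\pm]_{C^{0,\alpha}}+|f_\pm(0,0)|\leq 1$. In a contradiction sequence with $\epsilon_0\to 0$ the coefficients freeze to $\delta_{ij}$, but the right-hand sides, being merely \emph{bounded} in $C^{0,\alpha}$, subconverge to some $f_\pm^\infty\in C^{0,\alpha}$ that need not be constant. The limit $w_\infty$ then solves \eqref{limitpb} with the Laplacian and a H\"older right-hand side, not the model problem \eqref{pbv}; Theorem~\ref{thm:limitpb} does not apply, and invoking a piecewise $C^{2,\alpha}$ (or $r^3$) expansion for $w_\infty$ at that point is circular. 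The standard fix, which the paper also uses, is to require $[f_\pm]_{C^{0,\alpha}}\leq\epsilon_0$ as well (keeping $|f_\pm(0,0)|$ merely bounded), arranged by the initial normalization $\tilde w=K^{-1}\rho^{-2}w(\rho\,\cdot,\rho^2\,\cdot)$ with $K$ absorbing $[f_\pm]_{C^{0,\alpha}}$ and $\rho$ small to shrink $[a_{ij}]_{C^{0,\alpha}}$ (see the proof of Theorem~\ref{thm:pointwiseest}). With that correction the compactness limit does satisfy \eqref{pbv} with $\mu_\pm=\lim_k f_\pm^k(0,0)$, and the rest of your outline, including the iteration bookkeeping, goes through.
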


Theorem~\ref{thm:bootstrap} follows from the next pointwise $C^{2,\alpha}$ estimate on the interface, using a standard argument of patching the interior and boundary estimates; see \cite[Proposition~2.4]{MS}. 

\begin{thm} \label{thm:pointwiseest}
Fix $\alpha\in (0,1)$. Assume that $a_{ij} \in C^{\alpha}(0,0)$, $a_{ij}(0,0)=\delta_{ij}$, and $f_\pm\in C^\alpha(0,0)$. If $w$ is a viscosity solution to \eqref{limitpb}--\eqref{rhs} with $|w|\leq 1$, then 
$$w|_{\overline{Q_{1/2}^\pm}} \in C^{2,\alpha}(0,0),$$
i.e., there is a piecewise quadratic polynomial, $P(x,t)= \frac{1}{2} x^T A(x_n) x + b\cdot x + ct + d$, such that
$$
\|w-P	\|_{L^\infty(Q_r)} \leq C r^{2+\alpha} \quad \hbox{for all}~ r\leq  r_0,
$$
and some $ r_0 \in (0,1/4)$ and $C>0$, where the coefficients of $P$ satisfy:
$$
 a_+ c = \tr(A^+) + f_+(0,0),\quad a_- c = \tr(A^-) + f_-(0,0).
$$
Moreover, there is $C_0>0$ depending on $n$, $a_+$, $a_-$, and $[a_{ij}]_{C^\alpha(0,0)}$ such that
$$
\|A^+\|+\|A^-\|+|b|+|c|+|d|\leq C_0 \big([f_+]_{C^\alpha(0,0)} + [f_-]_{C^\alpha(0,0)} \big).
$$
\end{thm}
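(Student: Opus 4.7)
The plan is to prove this via a Caffarelli-style perturbation/iteration argument, using the constant-coefficient model of Theorem~\ref{thm:limitpb} as the compactness limit and exploiting the $C^\alpha$ structure at the origin to improve the approximation at each dyadic scale. First I would establish an approximation lemma: for every $\eta>0$ there is $\vep>0$ such that any viscosity solution $w$ of \eqref{limitpb}--\eqref{rhs} on $Q_1$ with $|w|\leq 1$, $\|a_{ij}-\delta_{ij}\|_{L^\infty(Q_1)}\leq \vep$, and $\|f_\pm - f_\pm(0,0)\|_{L^\infty(Q_1)}\leq \vep$ is $\eta$-close in $L^\infty(Q_{3/4})$ to a viscosity solution $v$ of the model problem \eqref{pbv} with $\mu_\pm = f_\pm(0,0)$. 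The proof is by contradiction: a sequence of counterexamples $w^k, a_{ij}^k, f_\pm^k$ with $a_{ij}^k\to \delta_{ij}$ and $f_\pm^k\to f_\pm(0,0)$ uniformly is precompact in $C^{0,\beta}_{loc}$ by Theorem~\ref{mp-holder}, and stability under uniform convergence (adapted to Definition~\ref{def:viscflat}) forces any limit $w^\infty$ to solve \eqref{pbv}, contradicting the initial distance assumption.

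Next I would extract a piecewise quadratic polynomial approximating $v$. By Theorem~\ref{thm:limitpb}, $v\in C^{1,1}_{loc}$ and smooth in $(x',t)$; moreover, in each open phase $v$ solves a smooth constant-coefficient heat equation, so by interior parabolic regularity plus the information already obtained on tangential and time derivatives, $v$ is smooth up to $\{x_n=0\}$ from each side. The continuity of $v$, $\nabla' v$, $v_t$, and $v_n$ across the interface forces the Hessians $A^\pm := D^2 v(0,0)|_{Q_1^\pm}$ to agree in every entry except possibly $(n,n)$, so the Taylor polynomial $P(x,t)=\tfrac{1}{2} x^T A(x_n) x + b\cdot x + ct + d$ is piecewise quadratic in the required sense, satisfies $a_\pm c = \tr(A^\pm)+f_\pm(0,0)$ by matching the equation at $(0,0)$, has coefficient bounds independent of $v$, and $\|v-P\|_{L^\infty(Q_\rho)}\leq C_0 \rho^3$ for all small $\rho$.

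Combining the two steps: choose $\rho$ so small that $C_0\rho^3 \leq \tfrac{1}{2}\rho^{2+\alpha}$ and then take $\eta = \tfrac{1}{2}\rho^{2+\alpha}$ in the approximation lemma, yielding $\|w-P\|_{L^\infty(Q_\rho)}\leq \rho^{2+\alpha}$. For the iteration, rescale $\bar w(x,t)= \rho^{-(2+\alpha)}(w-P)(\rho x,\rho^2 t)$; then $|\bar w|\leq 1$ on $Q_1$ and $\bar w$ solves a transmission problem of the same form with coefficients $\tilde a_{ij}(x,t)=a_{ij}(\rho x,\rho^2 t)$ (still $\delta_{ij}$ at the origin) and right-hand side
\[
\tilde f_\pm(x,t) = \rho^{-\alpha}\bigl[(f_\pm(\rho x,\rho^2 t)-f_\pm(0,0)) - (a_{ij}(\rho x,\rho^2 t)-\delta_{ij}) A^\pm_{ij}\bigr],
\]
for which $\tilde f_\pm(0,0)=0$ and the $C^\alpha$ seminorm at $(0,0)$ is controlled by $[f_\pm]_{C^\alpha(0,0)} + \|A^\pm\|\,[a_{ij}]_{C^\alpha(0,0)}$, while $\|\tilde a_{ij}-\delta_{ij}\|_{L^\infty(Q_1)}\leq \rho^\alpha[a_{ij}]_{C^\alpha(0,0)}$. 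After a preliminary normalization dividing $w$ by a large constant so that $[a_{ij}]_{C^\alpha(0,0)}$ and $[f_\pm]_{C^\alpha(0,0)}$ are small relative to $\vep$, the smallness hypotheses persist through each iteration. Iterating produces polynomials $P_k$ of the correct form at scale $\rho^k$ with $\|w-P_k\|_{L^\infty(Q_{\rho^k})}\leq \rho^{k(2+\alpha)}$ and coefficient increments decaying geometrically, so the limit polynomial $P=\lim P_k$ gives the claimed estimate $\|w-P\|_{L^\infty(Q_r)}\leq C r^{2+\alpha}$ and coefficient bounds.

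The main obstacle I anticipate is the compactness step, specifically showing that uniform limits of viscosity sub/supersolutions of the variable-coefficient transmission problem remain viscosity sub/supersolutions of the constant-coefficient model problem \eqref{pbv} in the sense of Definition~\ref{def:viscflat}; because test functions are only piecewise quadratic with a fixed transmission structure, one must verify that a test $P$ touching $w^\infty$ at a point on $\{x_n=0\}$ can be slightly perturbed to touch $w^k$ at a nearby point while preserving the piecewise quadratic form, and then pass the alternative inequality to the limit. A secondary, routine point is to verify that the relation $a_\pm c_k=\tr(A_k^\pm)+f_\pm(0,0)$ is maintained along the iteration: since each incremental polynomial produced from $\bar w$ satisfies the homogeneous relation $a_\pm\tilde c=\tr(\tilde A^\pm)$ (because $\tilde f_\pm(0,0)=0$), undoing the rescalings and summing preserves the inhomogeneous version for the accumulated polynomial.
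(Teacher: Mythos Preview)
Your overall strategy is correct and matches the paper's iteration scheme: normalize so that $[a_{ij}]_{C^\alpha(0,0)}$ and $[f_\pm]_{C^\alpha(0,0)}$ are small, prove a one-step improvement at scale $\bar r$ producing a piecewise quadratic polynomial $P$ with $a_\pm c = \tr(A^\pm)+f_\pm(0,0)$, then rescale $(w-P_k)$ and iterate. The polynomial extraction, the form of the rescaled right-hand side $\tilde f_\pm$, and the verification that the relations $a_\pm c_k = \tr(A_k^\pm)+f_\pm(0,0)$ propagate are all as in the paper.

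The one genuine difference is your approximation lemma. You propose a compactness/contradiction argument, passing to a uniform limit and invoking stability of the viscosity notion in Definition~\ref{def:viscflat}. The paper instead proves this step (Lemma~\ref{lem1}) by a direct maximum-principle argument: it solves the Dirichlet problem for the constant-coefficient model \eqref{pbv} with boundary data $w$ on $\partial_p Q_{3/4}$ (the solution $v$ exists classically by the approximation in the proof of Theorem~\ref{thm:limitpb}), observes that $u=w-v$ is a viscosity solution of a transmission problem with right-hand side $g_\pm = (a_{ij}-\delta_{ij})D_{ij}v + f_\pm - f_\pm(0,0)$, and applies the ABP/H\"older estimate of Theorem~\ref{mp-holder} together with the interior $C^{1,1}$ bound on $v$ to get the quantitative estimate $\|w-v\|_{L^\infty(Q_{1/2})}\leq C\vep^{\beta/4}$. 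This route is shorter and sidesteps entirely the stability issue you flag as the main obstacle; it also gives an explicit rate rather than a soft $\eta$--$\vep$ dependence. Your compactness approach would also work (the piecewise-quadratic test structure is preserved under vertical translation, so the standard perturbation of touching points goes through), but it is the longer path here.
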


To prove the theorem, we need a couple of lemmas.

\begin{lem} \label{lem1}
 Fix $\vep \in (0,1/300)$.  Let $w$ be a viscosity solution to \eqref{limitpb}--\eqref{rhs} such that $|w|\leq 1$ in $Q_1$. If for all $i,j=1,\hdots,n$ we have
$$
 \|a_{ij}-\delta_{ij}\|_{L^{n+1}(Q_1)} + \|f_\pm - f_\pm(0,0)\|_{L^{n+1}(Q_1)}  \leq \vep,
$$
then the classical solution $v$ to \eqref{pbv} in $Q_{3/4}$ with $\mu_\pm=f_\pm(0,0)$ and $v=w$ on $\partial_p Q_{3/4}$ satisfies
$$
\|w - v\|_{L^\infty(Q_{1/2})}\leq C \vep^{\beta/4}
$$
for some $C>0$, and $\beta \in (0,1)$ as in Theorem~\ref{mp-holder}.
\end{lem}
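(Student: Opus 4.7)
The plan is to write $h := w - v$ as a viscosity solution of a variable coefficient transmission problem with a small right-hand side, apply the $L^\infty$ ABP-type bound in Theorem~\ref{mp-holder} on an intermediate cylinder $Q_{3/4-\sigma}$, and optimize in the cutoff radius $\sigma$.

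First, the classical solution $v$ on $Q_{3/4}$ exists and is bounded by adapting the approximation argument in the proof of Theorem~\ref{thm:limitpb}. That theorem, rescaled around each point of $Q_{3/4-\sigma}$, yields the interior $C^{1,1}$ bound
\[
\|D^2 v\|_{L^\infty(Q_{3/4-\sigma})}\leq C_0/\sigma^2,\quad \sigma\in(0,1/4),
\]
with $C_0$ depending on $n$, $a_\pm$, and $f_\pm(0,0)$. Since $v$ is classical and $v_n^+=v_n^-$ on $\{x_n=0\}$, it solves the variable coefficient transmission problem \eqref{limitpb} with right-hand sides
\[
G_\pm = f_\pm(0,0) + (\delta_{ij}-a_{ij})D_{ij}v.
\]
Using the remark permitting piecewise $C^2$ test functions in Definition~\ref{def:viscflat}, one checks that $h=w-v$ is a viscosity solution of \eqref{limitpb} in $Q_{3/4}$ with right-hand sides
\[
F_\pm = f_\pm - G_\pm = (f_\pm - f_\pm(0,0)) - (\delta_{ij}-a_{ij})D_{ij}v.
\]
Indeed, if a piecewise quadratic $P$ touches $h$ from above at $(x_0,t_0)$, then $P+v$ is piecewise $C^2$ with matching normal derivatives across $\{x_n=0\}$ (since both summands do) and touches $w$ from above, so the viscosity inequalities for $w$ translate immediately into those for $P$ relative to $F_\pm$.

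Next, apply the $L^\infty$ part of Theorem~\ref{mp-holder} to $h$ on $Q_{3/4-\sigma}$ (after rescaling to $Q_1$). The right-hand side is bounded by
\[
\|F_\pm\|_{L^{n+1}(Q_{3/4-\sigma})}\leq \vep + \vep\,\|D^2 v\|_{L^\infty(Q_{3/4-\sigma})} \leq C\vep/\sigma^2,
\]
using the hypothesis. The boundary term is controlled by H\"older continuity of $h$ up to $\partial_p Q_{3/4}$: applying Theorem~\ref{mp-holder} locally at every point of $\overline{Q_{3/4}}\subset Q_1$ gives $w\in C^{0,\beta}(\overline{Q_{3/4}})$, and the classical Dirichlet problem \eqref{pbv} with the H\"older boundary data $w|_{\partial_p Q_{3/4}}$ admits standard parabolic boundary H\"older estimates (applied separately on each side $Q^\pm_{3/4}$ and matched through the transmission condition), so $v\in C^{0,\beta}(\overline{Q_{3/4}})$. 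Since $h\equiv 0$ on $\partial_p Q_{3/4}$, this yields
\[
\sup_{\partial_p Q_{3/4-\sigma}}|h| \leq C\sigma^\beta.
\]
Combining the two estimates,
\[
\|h\|_{L^\infty(Q_{3/4-\sigma})}\leq C\bigl(\sigma^\beta + \vep/\sigma^2\bigr),
\]
and the choice $\sigma=\vep^{1/(\beta+2)}$ (admissible for $\vep$ small enough that $\sigma<1/4$ and $Q_{1/2}\subset Q_{3/4-\sigma}$) gives $\|h\|_{L^\infty(Q_{1/2})}\leq C\vep^{\beta/(\beta+2)}\leq C\vep^{\beta/4}$, as claimed.

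The main technical obstacle is the quantitative boundary H\"older regularity for the classical solution $v$ up to $\partial_p Q_{3/4}$, particularly near the set where the flat interface $\{x_n=0\}$ meets the lateral part of $\partial_p Q_{3/4}$. This is not spelled out in the paper but follows from classical parabolic boundary H\"older theory on Lipschitz domains applied separately on each side of the transmission interface, compatibly via the transmission condition. An alternative route would be a compactness/stability argument against the constant coefficient limit; the direct ABP-with-optimization approach above, however, yields a sharper exponent $\beta/(\beta+2)$ than the claimed $\beta/4$.
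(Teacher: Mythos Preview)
Your proposal is correct and follows essentially the same approach as the paper: write $h=w-v$ as a viscosity solution of the transmission problem with right-hand side $(a_{ij}-\delta_{ij})D_{ij}v+(f_\pm-f_\pm(0,0))$, apply the maximum-principle part of Theorem~\ref{mp-holder} on $Q_{3/4-\sigma}$, bound the boundary term via the H\"older continuity of $w$ and $v$ (using $h=0$ on $\partial_p Q_{3/4}$), and optimize in $\sigma$. The only differences are that the paper takes the suboptimal choice $\sigma=\vep^{1/4}$ to land directly on the stated exponent $\beta/4$, and is less explicit than you are about the boundary H\"older regularity of $v$ up to $\partial_p Q_{3/4}$.
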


\begin{proof}
Let $v$ be as in the statement. Then the function $u=w-v$ satisfies
 \begin{equation*}
\begin{cases} 
a_+u_t - a_{ij}(x,t) D_{ij} u = g_+(x,t) &  \hbox{in}~Q_{3/4}^+,\\
a_-u_t - a_{ij}(x,t) D_{ij} u = g_-(x,t) &  \hbox{in}~Q_{3/4}^-,\\
 u_n^+ = u_n^- & \hbox{on}~Q_{3/4}\cap \{x_n=0\},
\end{cases}
\end{equation*}
in the viscosity sense, where $g_\pm(x,t) = (a_{ij}(x,t)-\delta_{ij})D_{ij}v + f_\pm(x,t)- f_\pm(0,0)$. Fix some $r\in (0,1/4)$. By the maximum principle and the interior $C^{0,\beta}$ estimate  in Theorem~\ref{mp-holder}, and the interior $C^{1,1}$ estimate of $v$ in Theorem~\ref{thm:limitpb}, we have
\begin{align*}
\|u\|_{L^\infty(Q_{3/4-r})} & \leq C \big( \|u\|_{L^\infty(\partial_p Q_{3/4-r})} + \|g_+\|_{L^{n+1}(Q_{3/4})}+ \|g_-\|_{L^{n+1}(Q_{3/4})}\big)\\
&\leq C\big( \|u\|_{L^\infty(\partial_p Q_{3/4-r})} +  \max_{i,j} \|a_{ij}-\delta_{ij}\|_{L^{n+1}(Q_1)} \|D^2 v\|_{L^\infty(Q_{3/4-r})} \\
& \qquad \qquad  \|f_+(x,t)- f_+(0,0)\|_{L^{n+1}(Q_1)} + \|f_-(x,t)- f_-(0,0)\|_{L^{n+1}(Q_1)}\big) \\
&\leq C \big ([w]_{C^\beta(\overline{Q_{3/4-r}})}+[v]_{C^\beta(\overline{Q_{3/4-r}})}\big) r^\beta + C \vep r^{-2}+ C\vep\\
& \leq C(\vep^{\beta/4}+ \vep^{1/2}) \leq C\vep^{\beta/4},
\end{align*}
where the second last inequality follows choosing $r=\vep^{1/4}<1/4$.
\end{proof}

\begin{lem} \label{lem2}
Given $\alpha \in (0,1)$, there exist $\bar \vep \in(0,1/300)$, $\bar r \in (0,1/4)$, and $C_0>0$, such that if $w$ is a viscosity solution to \eqref{limitpb}--\eqref{rhs} with $|w|\leq 1$ in $Q_1$ and
$$
 \|a_{ij}-\delta_{ij}\|_{L^{n+1}(Q_1)} +  \|f_\pm - f_\pm(0,0)\|_{L^{n+1}(Q_1)}  \leq \bar \vep,
$$
then there is a piecewise quadratic polynomial, $P(x,t)= \frac{1}{2} x^T A(x_n) x + b\cdot x + ct + d$, such that
$$
\|w-P\|_{L^\infty(Q_{\bar r})} \leq \bar r^{2+\alpha}.
$$
Moreover, $\|A^+\|+\|A^-\|+|b|+|c|+|d|\leq C_0$, and
\begin{align*}
 a_+ c  = \tr(A^+) + f_+(0,0), \quad  a_- c = \tr(A^-) + f_-(0,0).
\end{align*}

\end{lem}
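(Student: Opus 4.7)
The plan is to approximate $w$ by the classical solution $v$ of the constant-coefficient transmission problem provided by Lemma~\ref{lem1}, and then take $P$ to be the piecewise Taylor polynomial of $v$ at the origin. More precisely, apply Lemma~\ref{lem1} to obtain $v$ solving \eqref{pbv} in $Q_{3/4}$ with $\mu_\pm = f_\pm(0,0)$ and $v = w$ on $\partial_p Q_{3/4}$, satisfying $\|w-v\|_{L^\infty(Q_{1/2})} \le C\bar\vep^{\beta/4}$. By Theorem~\ref{thm:limitpb}, $v \in C^{1,1}_{loc}(Q_{3/4})$ and is smooth in the tangential variables $(x',t)$, with bounds depending only on $n$, $a_+$, $a_-$, $\mu_+$, $\mu_-$. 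Restricted to each closed half $\overline{Q_{1/4}^{\pm}}$, the function $v$ solves a constant-coefficient parabolic equation whose Dirichlet trace on $\{x_n=0\}$ is smooth in $(x',t)$; bootstrapping the standard one-sided parabolic Schauder estimates gives $v \in C^\infty(\overline{Q_{1/4}^\pm})$ with quantitative bounds on all derivatives.

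Next, I would define
\[
P(x,t) = d + b \cdot x + ct + \tfrac{1}{2} x^T A(x_n)\, x,
\]
where $d = v(0,0)$, $b = \nabla v(0,0)$, $c = v_t(0,0)$, and $A^{\pm}_{ij} = \lim_{\pm x_n \to 0^+} D_{ij} v(x',x_n,0)$. Because $v$ is $C^1$ across $\{x_n=0\}$ and smooth in $(x',t)$, all second partials involving at least one tangential direction are continuous across the interface, so $A^+_{ij} = A^-_{ij}$ whenever $(i,j) \ne (n,n)$; only $D_{nn}v$ may jump, matching the piecewise quadratic structure. Evaluating each equation of \eqref{pbv} at the origin from the respective half yields $a_\pm c = \tr(A^\pm) + f_\pm(0,0)$, and the coefficient bound $\|A^+\| + \|A^-\| + |b| + |c| + |d| \le C_0$ is immediate from the one-sided $C^2$ estimates for $v$.

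To conclude, apply the second-order Taylor expansion of $v$ on each closed half: smoothness up to the boundary yields $|v - P| \le Cr^3$ on $Q_r^\pm$ for all $r \le 1/4$. Combining this with the closeness of $v$ to $w$,
\[
\|w - P\|_{L^\infty(Q_{\bar r})} \le C\bar\vep^{\beta/4} + C \bar r^{\,3},
\]
and it suffices to choose $\bar r \in (0,1/4)$ so that $C \bar r^{\,1-\alpha} \le \tfrac{1}{2}$, then fix $\bar\vep$ so small that $C \bar\vep^{\beta/4} \le \tfrac{1}{2}\bar r^{\,2+\alpha}$. The main delicate point is the second step: even though $v$ is only $C^{1,1}$ globally across $\{x_n=0\}$, the smoothness in $(x',t)$ from Theorem~\ref{thm:limitpb} makes the interface data smooth, so that classical one-sided Schauder bootstraps upgrade $v$ to $C^\infty(\overline{Q_{1/4}^\pm})$; without this upgrade one could not produce a quadratic polynomial on each side with the required cubic error, and the scheme for passing from $\bar\vep$-smallness to the $\bar r^{\,2+\alpha}$ decay would fail.
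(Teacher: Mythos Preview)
Your proposal is correct and follows essentially the same route as the paper: invoke Lemma~\ref{lem1} to approximate $w$ by the constant-coefficient solution $v$, use the piecewise regularity of $v$ at the origin (from Theorem~\ref{thm:limitpb}) to build the piecewise quadratic $P$ with the correct trace relations $a_\pm c=\tr(A^\pm)+f_\pm(0,0)$, and then choose $\bar r$ and $\bar\vep$ to absorb the two error terms $C\bar\vep^{\beta/4}$ and $C\bar r^{3}$ into $\bar r^{2+\alpha}$. The only difference is cosmetic: the paper simply cites Theorem~\ref{thm:limitpb} for the piecewise smoothness of $v$ (which follows from tangential smoothness together with the equation $D_{nn}v=a_\pm v_t-\Delta_{x'}v-\mu_\pm$), whereas you spell this out via a one-sided Schauder bootstrap using the smooth Dirichlet trace on $\{x_n=0\}$; both arguments are valid and yield the same $O(r^3)$ Taylor remainder on each half.
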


\begin{proof}
Fix $\bar \vep$ and  $\bar r$ to be determined. 
Let $v$ be as in Lemma~\ref{lem1}. By Theorem~\ref{thm:limitpb}, we know that $v$ is piecewise smooth and $C^{1,1}$ at $(0,0)$.
Define $P$ as in the statement, where
$$
A^\pm = \lim_{Q_{1/2}^\pm\ni(x,t)\to (0,0)} D^2 v(x,t), \ b=\nabla v(0,0), \ c= v_t(0,0), \ d= v(0,0).
$$
From the regularity of $v$, we know that the coefficients satisfy the conditions in the statement. Furthermore, using Lemma~\ref{lem1}, we have
$$
\|w-P\|_{L^\infty(Q_{\bar r})} \leq \|w-v\|_{L^\infty(Q_{\bar r})} + \|v-P\|_{L^\infty(Q_{\bar r})} \leq C \bar \vep^{\beta/4} + C \bar r^3.
$$
First, choose $\bar r$ small enough so that 
$C \bar r^3 \leq \bar{r}^{2+\alpha}/2.$
Then choose $\bar \vep$ small enough so that $C \vep^{\beta/4}\leq \bar{r}^{2+\alpha}/2$. We obtain the desired result. 
\end{proof}

Finally, we prove the main theorem of this section.

\begin{proof}[Proof of Theorem~\ref{thm:pointwiseest}]
Let $\bar\vep$ and $\bar r$ be as in Lemma~\ref{lem2}. Fix $\vep \leq \bar\vep$ sufficiently small.
We may normalize the problem so that $|w|\leq 1$ in $Q_1$ and
$
[a_{ij}]_{C^\alpha(0,0)} + [f_+]_{C^\alpha(0,0)} +  [f_-]_{C^\alpha(0,0)} \leq \vep.
$
Indeed, consider the rescaled function
$$
\tilde w(x,t) = \frac{\rho^{-2} w(\rho x, \rho^2 t)}{K} \quad\hbox{for}~ (x,t)\in Q_1,
$$
where $K= \rho^{-2}\|w\|_{L^\infty(Q_1)}+\vep^{-1}( [f_+]_{C^\alpha(0,0)}+ [f_-]_{C^\alpha(0,0)})$.
Then $\tilde w$ satisfies \eqref{limitpb} with $|\tilde w|\leq 1$ in $Q_1$,  $\tilde a_{ij}(x,t)= a_{ij}(\rho x, \rho^2 t)$, and
$\tilde f_\pm (x,t) = f_\pm(\rho x, \rho^2 t) / K$.
Clearly, 
$$ [\tilde f_+]_{C^\alpha(0,0)} +  [ \tilde f_-]_{C^\alpha(0,0)} \leq \vep
\quad \hbox{and}\quad 
[\tilde a_{ij}]_{C^\alpha(0,0)} =  [a_{ij}]_{C^\alpha(0,0)}   \rho^\alpha \leq \vep,$$
choosing $\rho>0$ sufficiently small.\medskip

It is enough to show the following discrete version of the theorem:\medskip

\textbf{Claim.} \textit{For each $k\geq 1$, there exists a piecewise quadratic polynomial, 
$$
P_k(x,t)= \frac{1}{2} x^T A_k(x_n) x + b_k\cdot x + c_k t + d_k, 
$$
satisfying $a_+c_k = \tr(A_k^+) + f_+(0,0)$  and  $ a_- c_k = \tr(A_k^-) + f_-(0,0)$, such that 
$$
\| w-P_k\|_{L^\infty(Q_{\bar r^k})} \leq  \bar r^{k(2+\alpha)}.
$$
Moreover, there is $C_0>0$ such that
\begin{align} \label{estcoeff}
 &\bar r^{2k}  \|A_{k+1}^+-A_{k}^+\| +\bar r^{2k}  \|A_{k+1}^--A_{k}^-\|
+ \bar r^{k} |b_{k+1}-b_{k}| \\ \nonumber
 &\qquad +  \bar r^{2k}  |c_{k+1}-c_{k}|+|d_{k+1}-d_{k}|\leq C_0 \bar r^{k(2+\alpha)}.
\end{align}
}
\medskip

We will prove the claim by induction. For $k=1$, by Lemma~\ref{lem2}, there is a piecewise quadratic polynomial $P_1$ satisfying the above conditions. Assume the claim holds for some $k>1$. Consider the function
\begin{equation}\label{wk}
w_k (x,t) = \frac{w(\bar r^k x, \bar r^{2k} t)- P_k (\bar r^k x, \bar r^{2k} t)}{\bar r^{k(2+\alpha)}} \quad \hbox{for}~ (x,t)\in Q_1.
\end{equation}
Then $|w_k|\leq 1$ in $Q_1$ and $w_k$ satisfies
\begin{equation*}
\begin{cases} 
a_+\partial_t w_k - (a_k)_{ij}(x,t) D_{ij} w_k = (f_+)_k(x,t) &  \hbox{in}~Q_{1}^+,\\
a_-\partial_t  w_k - (a_k)_{ij}(x,t) D_{ij} w_k = (f_-)_k(x,t) &  \hbox{in}~Q_{1}^-,\\
 (w_k)_n^+ = (w_k)_n^- & \hbox{on}~Q_{1}\cap \{x_n=0\},
\end{cases}
\end{equation*}
where 
\begin{align*}
(a_k)_{ij} (x,t) & = a_{ij}(\bar r^k x, \bar r^{2k} t)\\
(f_+)_k (x,t) & = \bar r^{-k\alpha} \big( f_+(\bar r^k x, \bar r^{2k} t)- f_+(0,0)\big) + \bar r^{-k\alpha} \big(\delta_{ij} - (a_k)_{ij}(x,t)\big) (A_k^+)_{ij},\\
(f_-)_k (x,t) & =  \bar r^{-k\alpha} \big( f_-(\bar r^k x, \bar r^{2k} t)- f_-(0,0)\big) + \bar r^{-k\alpha} \big(\delta_{ij} - (a_k)_{ij}(x,t)\big) (A_k^-)_{ij}.
\end{align*}

We have that
$$
\|(a_k)_{ij} - \delta_{ij}\|_{L^\infty(Q_1)} \leq [a_{ij}]_{C^\alpha(0,0)} \bar r^{k\alpha} \leq \bar \vep. 
$$
Also, $(f_\pm)_k(0,0)=0$ and
$$
\|(f_\pm)_k\|_{L^\infty(Q_1)} \leq [f_\pm]_{C^\alpha(0,0)} + [a_{ij}]_{C^\alpha(0,0)} \|A_k^\pm\| \leq (1+C_0)\vep \leq \bar \vep,
$$
taking $\vep$ small enough. 
Hence, $w_k$ satisfies the assumptions of Lemma~\ref{lem2}, so there exists some piecewise quadratic polynomial, $\bar P(x,t) =  \frac{1}{2} x^T \bar A(x_n) x + \bar b\cdot x + \bar ct + \bar d$, satisfying  $a_+\bar c  = \tr(\bar A^+)$ and  $a_-\bar c = \tr(\bar A^-)$, such that
$$
\| w_k - \bar P\|_{L^\infty(Q_{\bar r})}\leq \bar r^{2+\alpha}.
$$
Moreover, $\|\bar A^+\|+\|\bar A^-\|+|\bar b|+|\bar c|+| \bar d|\leq C_0$.
 By \eqref{wk}, the above estimate is equivalent to
\begin{equation*}
 \sup_{Q_{\bar r}} | w(\bar r^k x, \bar r^{2k} t) - P_k( \bar r^k x, \bar r^{2k} t) -\bar r^{k(2+\alpha)} \bar P(x,t)| \leq \bar r^{(k+1)(2+\alpha)}.
 \end{equation*}
Taking 
$P_{k+1}(x,t) = P_k(x,t) + \bar r^{k(2+\alpha)} \bar P(x/\bar r^k, t/ \bar r^{2k}),
$ 
it follows that
$$
\|w-P_{k+1}\|_{L^\infty(Q_{\bar r^{k+1}})}\leq \bar r^{(k+1)(2+\alpha)}.
$$
By definition of $P_{k+1}$, we have
\begin{align*}
A_{k+1}^\pm &= A_k^\pm + \bar r^{k\alpha} \bar A,\\
b_{k+1} &= b_k + \bar r^{k(1+\alpha)} \bar b,\\
c_{k+1} &= c_k + \bar r^{k\alpha} \bar c,\\
d_{k+1} &= d_k + \bar r^{k(2+\alpha)} \bar d.
\end{align*}
Then it is clear that the coefficients satisfy the estimate in \eqref{estcoeff}. Finally, 
\begin{align*}
a_\pm c_{k+1} & = a_\pm c_k +  \bar r^{k\alpha} a_\pm \bar c
 = \tr( A_k^\pm) + f_\pm(0,0) +  \bar r^{k\alpha} \tr(\bar A) 
= \tr(A_{k+1}^\pm)  + f_\pm(0,0) .
\end{align*}
Therefore, we conclude the desired result.
\end{proof}

\section{Improvement of flatness} \label{sec:improvement}

The following theorem says that if the distance of $u$ to $x_n$ is at most $\delta$ in the unit cylinder $Q_1$, then in a smaller cylinder $Q_r$, the distance of $u$ to a possibly rotated plane, close to $x_n$, is at most $\tfrac{\delta}{2}r$. An iteration of this result will give the $C^{1,\alpha}$ regularity of the free boundary. 

\begin{thm}[Improvement of flatness] \label{improvflat}
Let $u$ be a viscosity solution to \eqref{FBP} in $Q_1$, with $(0,0)\in F(u)$, satisfying the flatness condition:
\begin{equation} \label{eq:flatness}
\sup_{Q_1}|u(x,t)-x_n|\leq \delta.
\end{equation}
There are constants $\bar r \in (0,1/2)$ and $C_0>0$,  depending on $n$, $a_+,$ and $a_-$, such that for all $r \in (0, \bar r]$, there is $\bar \delta=\bar \delta( r)>0$ such that: if $\delta \in (0, \bar \delta]$, then there exists some vector $\bar \nu \in \Rn$, with $| \bar\nu - e_n|\leq  C_0 \delta$, for which
$$
\sup_{Q_{r}} |u(x,t)- \bar \nu \cdot x | \leq \frac{\delta}{2} {r}.
$$
\end{thm}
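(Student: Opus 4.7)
The plan is to argue by contradiction using a compactness/linearization scheme in the spirit of De Silva~\cite{DS}, adapted to the parabolic transmission framework. Fix $\bar r \in (0,1/2)$ and $C_0>0$ to be chosen later, and suppose the conclusion fails at some $r \in (0,\bar r]$: there exist sequences $\delta_k \to 0^+$ and viscosity solutions $u_k$ of \eqref{FBP} in $Q_1$ with $(0,0)\in F(u_k)$ and $\sup_{Q_1}|u_k - x_n|\leq\delta_k$, such that no $\bar\nu \in \Rn$ with $|\bar\nu - e_n|\leq C_0\delta_k$ satisfies $\sup_{Q_r}|u_k - \bar\nu\cdot x|\leq\delta_k r/2$. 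The first move is to normalize via $\tilde u_k := (u_k - x_n)/\delta_k$, which satisfies $|\tilde u_k|\leq 1$ in $Q_1$ and, since $(0,0)\in F(u_k)$, $\tilde u_k(0,0)=0$.

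The next step is compactness. For $k$ large enough that $\delta_k \leq \delta_0/4$, Corollary~\ref{holder} provides uniform parabolic Hölder bounds on $\tilde u_k$ between points in $Q_{1/2}$ whose parabolic distance is at least $2\delta_k/\delta_0$. Since this threshold vanishes as $k\to\infty$, an Arzelà-Ascoli argument yields, along a subsequence, a locally uniform limit $\tilde u \in C(Q_{1/2})$ satisfying $|\tilde u|\leq 1$ and $\tilde u(0,0)=0$.

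The heart of the argument is to identify $\tilde u$ as a viscosity solution of the homogeneous flat transmission problem \eqref{pbv} with $\mu_\pm = 0$. Away from $\{x_n=0\}$, the free boundaries $F(u_k)\subset\{|x_n|\leq\delta_k\}$ collapse to the hyperplane, so any smooth test function in $Q_{1/2}^\pm$ eventually touches $u_k$ in the correct phase, and the homogeneous heat equations $a_\pm\tilde u_t - \Delta\tilde u=0$ pass to the limit in $Q_{1/2}^\pm$. Recovering the transmission condition $\tilde u_n^+ = \tilde u_n^-$ on $\{x_n = 0\}$ is the principal technical obstacle: if a piecewise-quadratic polynomial $P$ touches $\tilde u$ strictly from above at a point on $\{x_n=0\}$ violating $(iii)$ of Definition~\ref{def:viscflat}, the perturbed surface $x_n + \delta_k P(x,t)$, after a small translation, should touch $u_k$ from above at a nearby point of $F(u_k)$ where $|\partial_\nu v^\pm|$ is close to $1$, so that the slope mismatch of $P$ contradicts condition $(iii)$ of Definition~\ref{viscsol} for $u_k$. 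The delicate bookkeeping is to guarantee that contact occurs on the free boundary itself, with nontrivial normal derivatives on both sides, rather than strictly inside one of the phases (which would force $P$ to solve one of the heat equations and contradict the spatial flatness).

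Once $\tilde u$ is pinned down, Theorem~\ref{thm:limitpb} applied at the origin, together with $\tilde u(0,0)=0$, yields a vector $b\in\Rn$ with $|b|\leq C_0$ such that $\sup_{Q_r}|\tilde u(x,t) - b\cdot x| \leq C_0 r^2$ for all $r\leq 1/4$; any time-derivative contribution is automatically absorbed into the $r^2$ error since the approximating polynomial depends only on $x$. Setting $\bar\nu_k := e_n + \delta_k b$, one has $|\bar\nu_k - e_n| = \delta_k|b| \leq C_0\delta_k$ and
$$
\sup_{Q_r}|u_k(x,t) - \bar\nu_k\cdot x| \;=\; \delta_k \sup_{Q_r}|\tilde u_k - b\cdot x| \;\leq\; \delta_k\bigl(\|\tilde u_k - \tilde u\|_{L^\infty(Q_r)} + C_0 r^2\bigr).
$$
Choosing $\bar r$ small enough that $C_0 \bar r \leq 1/4$ and $k$ large enough that the uniform-convergence error is below $r/4$ makes the right-hand side at most $\delta_k r/2$, contradicting the choice of $u_k$ and closing the scheme.
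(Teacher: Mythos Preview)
Your overall scheme---contradiction, normalization, compactness via Corollary~\ref{holder}, identification of the limit as a viscosity solution of the flat problem, and conclusion via Theorem~\ref{thm:limitpb}---matches the paper's proof. But your verification of the transmission condition at $\{x_n=0\}$ is misdirected. You propose that when a piecewise quadratic $P$ touches $\tilde u$ from above violating (iii) of Definition~\ref{def:viscflat}, the lifted barrier $x_n + \delta_k P$ should touch $u_k$ on $F(u_k)$ and yield a contradiction via ``the slope mismatch of $P$'' and condition (iii) of Definition~\ref{viscsol}. However, piecewise quadratic polynomials in this paper are by construction $C^1$ across $\{x_n=0\}$ (only $A_{nn}$ may jump), so the lifted test function has continuous gradient; at any point of its zero set one has $\partial_\nu v^+ = \partial_\nu v^-$, and condition (iii) of Definition~\ref{viscsol} is vacuous. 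There is no slope mismatch to exploit. The paper runs the argument in exactly the opposite direction: the contradiction comes from the \emph{interior phase equations} (i)--(ii) of Definition~\ref{viscsol}. One observes that for some $k$ the lifted touching point $(x_k,t_k)$ falls off $F(u_k)$, hence lies in $Q_{1/2}^+(u_k)$ or $Q_{1/2}^-(u_k)$, and the corresponding heat equation forces one of $a_\pm c - \tr A^\pm$ to have the wrong sign, contradicting the assumed strict inequalities. The case you describe as undesirable---contact strictly inside a phase, ``forcing $P$ to solve one of the heat equations''---is precisely the mechanism that closes the argument, not an obstacle.

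A minor point on compactness: Corollary~\ref{holder} only controls $\tilde u_k$ between points at parabolic distance $\geq 2\delta_k/\delta_0$, so equicontinuity fails at small scales and the standard Arzel\`a--Ascoli theorem does not apply directly. The paper handles this by passing to Hausdorff convergence of graphs (Lemma~\ref{lem:appendix}); uniform convergence then follows a posteriori from the H\"older continuity of the limit.
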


\begin{proof}
We divide the proof into three steps:\medskip

\textbf{Step 1.} Assume by contradiction that the statement is false. 
Fix $\bar r\in(0,1/2)$ and $C_0>0$ to be chosen later. Then there exists some $r\in (0, \bar r]$, a sequence $\delta_k\to 0$ as $k\to \infty$, and viscosity solutions $u_k$ to \eqref{FBP} in $Q_1$ with $(0,0) \in F(u_k)$ for each $k\geq $1, such that
\begin{equation} \label{flatk}
\sup_{Q_1}|u_k(x,t) - x_n | \leq \delta_k,
\end{equation}
but the conclusion of the theorem does not hold, i.e.,
\begin{equation} \label{contk}
\sup_{Q_{ r}} |u_k(x,t) - \bar \nu \cdot x| > \frac{\delta_k}{2} r,
\end{equation}
for any $\bar \nu\in \R^{n}$ with $|\bar \nu-e_n|\leq C_0\delta_k$.
Let 
\begin{equation}\label{defvk}
v_k = \frac{u_k - x_n}{\delta_k}.
\end{equation}
By \eqref{flatk} we have that $|v_k|\leq 1$ in $Q_1$. By Corollary~\ref{holder}, if $k$ is large enough so that $\delta_k \leq  \delta_0/4$, then for any $(x,t)\in Q_{1/2}$ and $(y,s) \in Q_{1/2}(x,t)$ with $d \equiv d_p((x,t),(y,s)) \geq 2 \delta_k / \delta_0$, we get
\begin{equation*} 
|v_k(x,t)- v_k(y,s)| \leq C d^{\gamma}.
\end{equation*}
Furthermore, $F(u_k)\subseteq Q_1\cap \{|x_n|\leq \delta_k\}$, and thus, 
\begin{equation} \label{eq:fbconv}
d_{\h} ( F(u_k), Q_1\cap \{x_n=0\} ) \to 0 \quad \hbox{as}~\delta_k \to 0, 
\end{equation}
where $d_{\h}$ is the Hausdorff distance, defined in Section~\ref{sec:appendix}. By a simple modification of Lemma~\ref{lem:appendix}, there exists a function $v\in C^{0,\gamma}(Q_{1/2})$ such that $\|v\|_{C^{\gamma}(Q_{1/2})}\leq C$, and
\begin{equation} \label{eq:graphs}
d_{\h} ( G(v_k), G(v)) \to 0 \quad \hbox{as}~\delta_k \to 0,
\end{equation}
where $G(v_k)$ and $G(v)$ are the graphs of $v_k$ and $v$ in $Q_{1/2}$, respectively. 
\medskip

\textbf{Step 2.}
Next, we show that $v$ is a viscosity solution to the transmission problem
\begin{equation*} 
\begin{cases}
a_+v_t - \Delta v = 0 &  \hbox{in}~Q_{1/2}^+,\\
a_-v_t - \Delta v = 0 &  \hbox{in}~Q_{1/2}^-,\\
 v_n^+ = v_n^- & \hbox{on}~Q_{1/2}\cap \{x_n=0\},
\end{cases}
\end{equation*}
in the sense of Definition~\ref{def:viscflat}. Assume by contradiction that there is a piecewise quadratic polynomial, $P(x,t) = \tfrac{1}{2} x^T A(x_n) x + b \cdot x + c t + d$ with $A(x_n) = A^+ \chi_{\{x_n\geq 0\}} + A^- \chi_{\{x_n<0\}},$
 that touches $v$ from below at some $(x_0,t_0) \in Q_{1/2}$, but the following holds:
\begin{enumerate}[$(i)$]
\item If $(x_0,t_0)\in Q_{1/2}^+$, then $a_+c - \tr(A^+) < 0$. 
\item  If $(x_0,t_0)\in Q_{1/2}^-$, then $a_-c - \tr(A^-) < 0$.
\item  If $(x_0,t_0)\in Q_{1/2}\cap\{x_n=0\}$, then $\max\big(a_+c - \tr(A^+), \ a_- c - \tr(A^-) \big) < 0.$
\end{enumerate}
We may assume that $P$ touches $v$ strictly from below at $(x_0,t_0)$. Otherwise, we take $P-\eta |x-x_0|^2 - \eta^2 (t_0-t)$ with $\eta>0$ sufficiently small.
By \eqref{eq:graphs}, there exist points $(x_k, t_k) \in Q_{1/2}$ with $(x_k,t_k)\to(x_0,t_0)$ and a sequence of real numbers $d_k\to 0$ as $k\to\infty$, such that
$$
\begin{cases}
P(x_k,t_k) + d_k = v_k (x_k,t_k), \\
P+d_k \leq v_k \ \hbox{in}~Q_\rho(x_k,t_k) \ \hbox{for some}~\rho>0 \ \hbox{small}.
\end{cases}
$$
Let $\tilde P =\delta_k(P+d_k) + x_n.$ It follows from \eqref{defvk} that $\tilde P$ touches $u_k$ from below at $(x_k,t_k)$.  
If $(x_0,t_0) \in Q_{1/2}^+$, then by \eqref{eq:fbconv}, we have $(x_k,t_k) \in Q_{1/2}^+(u_k)$ for $k$ sufficiently large. Hence, by definition of viscosity supersolution, we get
$$
a_+\tilde P_t - \Delta \tilde P = a_+ \delta_k c  - \delta_k \tr(A^+) \geq 0,
$$
and dividing by $\delta_k$, we reach a contradiction with $(i)$.
Hence, $(x_0,t_0)\notin Q_{1/2}^+$. A similar argument shows that $(x_0,t_0)\notin Q_{1/2}^-$. Therefore, $(x_0,t_0)\in Q_{1/2}\cap\{x_n=0\}$. In this case, there exists some $k\geq 1$ such that $(x_k,t_k)\notin F_{1/2}(u_k)$. Hence, we must have that $a_+c - \tr(A^+)\geq 0$ or  $a_-c - \tr(A^-) \geq 0$, which contradicts $(iii)$. 
\medskip

\textbf{Step 3.}
By Theorem~\ref{thm:limitpb}, $v\in C^{1,1}(0,0)$, and there exists a constant $C_0>0$ such that
$$
\sup_{Q_r} |v(x,t)-b \cdot x| \leq C_0 r^2, 
$$
where $|b| \leq C_0$. Notice that $c=0$ since $(0,0)\in F(u)$. Hence, in view of \eqref{eq:graphs}, it follows that
 $$
\sup_{Q_r} |v_k(x,t)-b\cdot x| \leq  2C_0 r^2
$$
for $k$ sufficiently large. From the definition of $v_k$ in \eqref{defvk}, we get
$$
\sup_{Q_r} |u_k(x,t) - x_n - \delta_k b \cdot x | \leq  2C_0 \delta_k r^2.
$$
Let $\nu_k = \delta_k b + e_n$. Then $|\nu_k-e_n|\leq C_0\delta_k$, and choosing $\bar r \in (0,1/2)$ small enough so that $2 C_0   \bar r \leq 1/2$, we see that
$$
\sup_{Q_r} |u_k(x,t) - \nu_k \cdot x| \leq \frac{\delta_k}{2} r,
$$
which is a contradiction with \eqref{contk}.
\end{proof}

\section{$C^{1,\alpha}$ regularity of the free boundary} \label{sec:fbregularity}

Theorem~\ref{thm2} will be a consequence of the following $C^{1,\alpha}$ estimate at points on the free boundary of viscosity solutions to \eqref{FBP} satisfying the flatness assumption.

\begin{thm} \label{thm:improvflat2}
Let $ \bar r \in (0,1/2)$ and $\bar \delta=\bar \delta(\bar r)>0$ be as in Theorem~\ref{improvflat}. 
Let $u$ be a viscosity solution to \eqref{FBP} satisfying \eqref{eq:flatness} for some $\delta \in (0, \bar \delta]$. There exists $\alpha = \alpha(\bar r) \in (0,1)$ such that for any $(x_0,t_0)\in F_{1/2}(u)$, there is $\nu_{x_0,t_0} \in \Rn$, with $|\nu_{x_0,t_0}-e_n|\leq C\delta$, for which
$$
\sup_{Q_r(x_0,t_0)} |u(x,t) - \nu_{x_0,t_0} \cdot (x-x_0)| \leq C \delta r^{1+\alpha} \quad \hbox{for all}~ r\leq \bar r,
$$ 
where $C>0$ depends on $n$, $a_+$, and $a_-$.
\end{thm}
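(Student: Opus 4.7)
The plan is to iterate Theorem~\ref{improvflat} at the single scale $\bar r$, tracking a rotating and slightly-rescaled approximate normal at each step and extracting a limiting vector $\nu_{x_0,t_0}$. First I reduce to $(x_0,t_0)=(0,0)$ via the parabolic rescaling $u_0(y,s):=2u(x_0+y/2,\,t_0+s/4)$. The invariance of \eqref{FBP} under parabolic dilations and scalar multiplication makes $u_0$ a viscosity solution in $Q_1$ with $(0,0)\in F(u_0)$, and since \eqref{eq:flatness} forces $|(x_0)_n|\leq\delta$, the flatness transfers to $\sup_{Q_1}|u_0(y,s)-y_n|\leq 4\delta =: \delta_0$. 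I assume $\bar\delta$ is chosen so that all smallness conditions below are met.

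I then build inductively solutions $u_k$ of \eqref{FBP} in $Q_1$ with $(0,0)\in F(u_k)$ and $\sup_{Q_1}|u_k(x,t)-x_n|\leq \delta_k$. Given $u_k$, Theorem~\ref{improvflat} (applied with $r=\bar r$) produces $\bar\nu_k\in\R^n$ with $|\bar\nu_k-e_n|\leq C_0\delta_k$ and $\sup_{Q_{\bar r}}|u_k-\bar\nu_k\cdot x|\leq(\delta_k/2)\bar r$. Letting $R_k$ be a rotation with $R_k e_n=\bar\nu_k/|\bar\nu_k|$, define
\[
u_{k+1}(x,t):=\frac{1}{\bar r \,|\bar\nu_k|}\,u_k(\bar r R_k x,\,\bar r^2 t).
\]
This is again a solution of \eqref{FBP} by rotation and parabolic-scaling invariance, has $(0,0)\in F(u_{k+1})$, and a direct calculation gives $\sup_{Q_1}|u_{k+1}-x_n|\leq \delta_k/(2|\bar\nu_k|)=:\delta_{k+1}$. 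Since $|\bar\nu_k|\geq 1-C_0\delta_k$, taking $\delta_0$ small enforces $\delta_{k+1}\leq(3/4)\delta_k$, so $\delta_k\leq(3/4)^k\delta_0$; defining $\alpha\in(0,1)$ by $\bar r^\alpha = 3/4$ encodes this as $\delta_k\leq\delta_0 \bar r^{k\alpha}$.

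To extract the limiting normal, let $\tilde R_k:=R_0\cdots R_{k-1}$ and $\mu_k:=\big(\prod_{j<k}|\bar\nu_j|\big)\tilde R_k e_n$. Unwinding the chain of rescalings, the flatness of $u_k$ translates back to
\[
\sup_{Q_{\bar r^k}}|u_0(y,s)-\mu_k\cdot y|\leq C\delta_0 \bar r^{k(1+\alpha)}.
\]
A short telescoping computation yields $\mu_{k+1}-\mu_k=\prod_{j<k}|\bar\nu_j|\cdot \tilde R_k(\bar\nu_k-e_n)$, so $|\mu_{k+1}-\mu_k|\leq C\delta_k$; the geometric decay of $\delta_k$ then makes $\{\mu_k\}$ Cauchy with limit $\mu_\infty$ satisfying $|\mu_\infty-e_n|\leq C\delta$ and $|\mu_k-\mu_\infty|\leq C\delta \bar r^{k\alpha}$. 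Replacing $\mu_k$ by $\mu_\infty$ costs at most $|\mu_\infty-\mu_k|\cdot \bar r^k\leq C\delta \bar r^{k(1+\alpha)}$, and undoing the initial parabolic rescaling gives $\sup_{Q_r(x_0,t_0)}|u(x,t)-\nu_{x_0,t_0}\cdot(x-x_0)|\leq C\delta r^{1+\alpha}$ at every dyadic radius $r=\bar r^k/2$, with $\nu_{x_0,t_0}:=\mu_\infty$. Intermediate scales follow by monotonicity at the cost of enlarging $C$, and the range $r\in(\bar r/2,\bar r]$ is absorbed via the crude bounds $|u-x_n|\leq\delta$, $|\mu_\infty-e_n|\leq C\delta$, and $|(x_0)_n|\leq\delta$, after enlarging $C$ by a $\bar r$-dependent factor.

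The main obstacle is technical bookkeeping: the normals $\bar\nu_k$ returned by Theorem~\ref{improvflat} are \emph{not} unit vectors, so the rescaling that straightens each one into $e_n$ introduces a factor $1/|\bar\nu_k|=1+O(\delta_k)$. This rules out the clean contraction $\delta_{k+1}=\delta_k/2$ and forces $\alpha$ to be determined by the slightly worse factor $3/4$; the bound $|\nu_{x_0,t_0}-e_n|\leq C\delta$ then rests on summing the cumulative $C_0\delta_k$ rotation corrections together with the cumulative product $\prod|\bar\nu_j|$, which is exactly what the geometric decay of $\delta_k$ makes tractable.
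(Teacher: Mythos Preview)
Your argument is correct and follows the same core strategy as the paper: iterate Theorem~\ref{improvflat} at the single scale $\bar r$, produce a Cauchy sequence of approximate normals, and pass to the limit $\nu_{x_0,t_0}$. The difference is in how the changing normal direction is handled. The paper keeps a single blow-up sequence $u_k(x,t)=\bar r^{-k}u(\bar r^k x+x_0,\bar r^{2k}t+t_0)$ and, at each step, applies Theorem~\ref{improvflat} to $u_k-(\nu_k-e_n)\cdot x$; this avoids any rotations, yields the clean contraction $\delta_{k+1}=\delta_k/2$ (hence $\bar r^\alpha=1/2$), and gives the simple update $\nu_{k+1}=\nu_k+\bar\nu_k-e_n$, but the subtracted function is not literally a viscosity solution of \eqref{FBP} (its zero set is shifted off $F(u_k)$), so the step tacitly relies on the rotation invariance of the problem. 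You instead rotate explicitly at each step so that the hypotheses of Theorem~\ref{improvflat} hold verbatim; the price is the scalar factor $1/|\bar\nu_k|$, which forces the weaker contraction $3/4$ (and a slightly smaller $\alpha$), together with the cumulative rotation--scale bookkeeping for $\mu_k$. Both routes are standard and reach the same conclusion; yours trades brevity for being fully self-contained.
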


\begin{proof}
For each $k\geq 0$, consider the blow-up sequence of functions
$$
u_k(x,t) = \frac{u( \bar r^k x+x_0, \bar r^{2k} t+t_0)}{\bar r^k} \quad \hbox{for}~ (x,t)\in Q_1.
$$
Then $u_k$ is a viscosity solution of \eqref{FBP}. 
Define  $\alpha\in (0,1)$ such that $ \bar r^{\alpha} = 1/2$.
We will prove by induction that there is a constant $C_0>0$ and vectors $\nu_k \in \Rn$ such that
\begin{align*} \label{claim}
\sup_{Q_1} |u_k(x,t) - \nu_k \cdot x |& \leq \delta  \bar r^{k \alpha},\\
|\nu_{k+1} - \nu_{k}| &\leq C_0 \delta  \bar r^{k \alpha}.
\end{align*}
Indeed, if $k=0$, then $\nu_0=e_n$, and the estimate follows trivially from \eqref{flatness}. 
Assume the claim holds for some $k\geq 1$.  Then
$$
\sup_{Q_1} |u_k(x,t)- (\nu_k-e_n) \cdot x - x_n | \leq \delta_k,
$$
where $\delta_k=\delta \bar r^{k \alpha}.$ By Theorem~\ref{improvflat} applied to the function $u_k(x,t)- (\nu_k-e_n) \cdot x$ (note that $(0,0)$ is on the free boundary), there exists some $\bar\nu_k\in \R^{n}$ with $|\bar\nu_k - e_n|\leq C_0 \delta_k$, for some $C_0>0$, such that
$$
\sup_{Q_{\bar r}} |u_k(x,t)- (\nu_k-e_n) \cdot x - \bar\nu_k \cdot x | \leq \frac{\delta_k}{2} \bar r.
$$
 Then $u_{k+1}(x,t)= \bar r^{-k} u_k ( \bar r^k x ,  \bar r^{2k} t)$ satisfies
$$
\sup_{Q_1} |u_{k+1}(x,t)- \nu_{k+1}\cdot x  | \leq \frac{\delta_k}{2}=\delta \bar r^{(k+1)\alpha} ,
$$
where $\nu_{k+1}=\nu_k+\bar\nu_k-e_n$, so $|\nu_{k+1}-\nu_k|\leq C_0\delta_k=C_0\delta \bar r^{k\alpha}$. Therefore, there exists some $\nu_{x_0,t_0}\in \Rn$ such that $\nu_k \to \nu_{x_0,t_0}$ as $k\to \infty$, and 
\begin{align*}
|\nu_{x_0,t_0} - e_n| \leq \sum_{k=0}^\infty |\nu_{k+1}-\nu_k|\leq \frac{C_0\delta}{1-\bar r^{\bar \alpha}}\leq C\delta.
\end{align*}
Given $r \leq \bar r$, there exists some $k\geq 0$ such that $\bar r^{k+1} < r \leq \bar r^k$. We conclude that:
\begin{align*}
\sup_{Q_r(x_0,t_0)} |u(x,t) - \nu_{x_0,t_0} \cdot (x-x_0)| & 
\leq \bar r^k \sup_{Q_1} |u_k(x,t) - \nu_{x_0,t_0}  \cdot x| \\
& \leq \bar r^k \sup_{Q_1} |u_k(x,t) - \nu_k  \cdot x| + \bar r^k |\nu_{x_0,t_0}- \nu_k|\\
& \leq \delta \bar r^{k(1+\alpha)} + \delta C_0 \bar r^k \sum_{j= k}^\infty \bar r^{j \alpha} \\
& \leq \frac{\delta}{\bar r^{1+\alpha}} \Big(1+\frac{C_0}{1-\bar r^{\alpha}}\Big) \bar r^{(k+1)(1+\alpha)}
\leq C \delta r^{1+\alpha}.
\end{align*}
\end{proof}

\begin{rem} \label{rem:normal}
The estimate in Theorem~\ref{thm:improvflat2} implies that for any $(x_0,t_0)\in F_{1/2}(u)$, we have  $\nabla u(x_0,t_0) = \nu_{x_0,t_0}$ and $u$ is $C^{1,\alpha}$ at that point. In particular, the free boundary condition in \eqref{FBP} is satisfied in the classical sense.
\end{rem}

\begin{cor}[$C^{0,\alpha}$ regularity of $\nu$] \label{cor:normal}
Suppose we are under the assumptions of Theorem~\ref{thm:improvflat2}.
Then for any  $(x_1, t_1), (x_2, t_2) \in F_{1/2}(u)$, with $r=d_p((x_1, t_1), (x_2, t_2))\leq \bar r/3$, we have
$$
|\nu_{x_1,t_1} - \nu_{x_2,t_2}|\leq C\delta r^{\alpha},
$$
for some $C>0$ depending on $n$, $a_+$, and $a_-$.
\end{cor}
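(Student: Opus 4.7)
The plan is to apply Theorem~\ref{thm:improvflat2} at the two free boundary points with a common scale of order $r$, and then compare the resulting linear approximations on a parabolic subcylinder that lies in both domains of applicability.

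First, since $(x_1,t_1),(x_2,t_2)\in F_{1/2}(u)$ and $3r\leq \bar r$, I would apply Theorem~\ref{thm:improvflat2} at each of the two points with radius $3r$ to obtain
$$
\sup_{Q_{3r}(x_i,t_i)} |u(x,t) - \nu_{x_i,t_i}\cdot(x-x_i)| \leq C\delta (3r)^{1+\alpha}, \quad i=1,2.
$$
Next I would locate a parabolic cylinder of size $\sim r$ contained in both $Q_{3r}(x_1,t_1)$ and $Q_{3r}(x_2,t_2)$. Setting $x_0=(x_1+x_2)/2$ and $t_0=\min(t_1,t_2)$, the bounds $|x_1-x_2|\leq r$ and $|t_1-t_2|\leq r^2$ coming from $d_p((x_1,t_1),(x_2,t_2))=r$ give $|x_0-x_i|+r\leq 3r$ and $t_0-r^2 > t_i-9r^2$ together with $t_0\leq t_i$, so $Q_r(x_0,t_0)\subset Q_{3r}(x_i,t_i)$ for $i=1,2$.

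On the cylinder $Q_r(x_0,t_0)$, subtracting the two estimates would then give
$$
|(\nu_{x_1,t_1}-\nu_{x_2,t_2})\cdot x - (\nu_{x_1,t_1}\cdot x_1 - \nu_{x_2,t_2}\cdot x_2)| \leq C\delta r^{1+\alpha}.
$$
Evaluating this at $x=x_0$ and at $x=x_0+(r/2)e_j$ (both inside $B_r(x_0)$) for each $j=1,\dots,n$ and subtracting cancels the constant term, yielding
$$
\tfrac{r}{2}|(\nu_{x_1,t_1}-\nu_{x_2,t_2})\cdot e_j| \leq 2C\delta r^{1+\alpha},
$$
and hence $|\nu_{x_1,t_1}-\nu_{x_2,t_2}|\leq C\delta r^\alpha$, as claimed.

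There is no substantive obstacle in this argument: it is a standard consequence of the pointwise $C^{1,\alpha}$ estimate, with the only mild care being the asymmetry in time for parabolic cylinders. Since $|t_1-t_2|\leq r^2$ by the definition of $d_p$, one can always place the common subcylinder with the top at $\min(t_1,t_2)$ and radius $r$, and the condition $r\leq \bar r/3$ ensures the pointwise estimate is valid at the required scale $3r$.
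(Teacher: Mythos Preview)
Your proof is correct and follows essentially the same strategy as the paper: apply the pointwise $C^{1,\alpha}$ estimate of Theorem~\ref{thm:improvflat2} at both free boundary points at a scale comparable to $r$, and compare the two linear approximations on a common region. The paper extracts $|\nu_{x_1,t_1}-\nu_{x_2,t_2}|$ via the $L^2$ average $\tfrac{c_n}{r^{n+2}}\int_{B_r(x_1)}|(\nu_{x_1,t_1}-\nu_{x_2,t_2})\cdot(x-x_1)|^2\,dx$ on the cylinder $Q_r(x_1,t_1)\subset Q_{3r}(x_2,t_2)$, whereas you evaluate at two nearby points to isolate each component; both are standard and your handling of the time asymmetry via $t_0=\min(t_1,t_2)$ is arguably cleaner.
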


\begin{proof}
Let $(x_1, t_1), (x_2, t_2) \in F_{1/2}(u)$ with $r=d_p((x_1, t_1), (x_2, t_2))\leq \bar r/3$. Then $Q_r(x_1,t_1) \subseteq Q_{3r}(x_2,t_2)$, and applying Theorem~\ref{thm:improvflat2} three times, we get
\begin{align*}
|\nu_{x_1,t_1} - \nu_{x_2,t_2}|^2 &= \frac{c_n}{r^{n+2}} \int_{B_r(x_1)}  |\nu_{x_1,t_1}\cdot(x-x_1) - \nu_{x_2,t_2}\cdot(x-x_1)|^2\, dx\\
&\leq \frac{\tilde c_n}{r^2} \ \Big( \sup_{Q_r(x_1,t_1)} |u(x,t)-\nu_{x_1,t_1}\cdot(x-x_1)|\\
&\qquad \qquad \quad +   \sup_{Q_{3r}(x_2,t_2)} |u(x,t)-\nu_{x_2,t_2}\cdot(x-x_2)|\\
& \qquad\qquad \qquad \qquad+  |u(x_1,t_1)-\nu_{x_2,t_2}\cdot(x_1-x_2)|\Big)^2\\
&\leq \frac{\tilde c_n}{r^2}  \big( C\delta r^{1+\alpha} + C\delta r^{1+\alpha} + C\delta r^{1+\alpha}\big)^2
\leq \tilde C \delta^2 r^{2\alpha},
\end{align*}
where we used that  $u(x_1,t_1)=0$, and $\tilde C>0$ depends only on $n$, $a_+$, and $a_-$.
\end{proof}

We now give the proof of our second main theorem:

\begin{proof}[Proof of Theorem~\ref{thm2}]
Let $\bar r\in(0,1/2)$ and $\bar \delta=\bar\delta (\bar r)>0$ be as in Theorem~\ref{improvflat}. Fix $\delta \in (0, \bar \delta]$ to be chosen small. 
First, we show that $F_{1/2}(u)$ is a graph in the $e_n$-direction.
Assume by contradiction that there exist two points $(x_1,t_1), (x_2,t_2) \in F_{1/2} (u)$ such that $x_1'=x_2'$ and $t_1=t_2$, but $(x_2-x_1)\cdot e_n =y_2-y_1 \neq 0$. Without loss of generality, $\rho =2 |y_1-y_2|\leq \bar r$. Then $(x_2,t_2) \in Q_{\rho}(x_1,t_1)$ and, by Theorem~\ref{thm:improvflat2}, we have
$$
|\nu_{x_1,t_1}- e_n| \leq C \delta \quad \hbox{and} \quad |\nu_{x_1,t_1}\cdot (x_2-x_1)| \leq C \delta \rho^{1+\alpha},
$$
since $u(x_2,t_2)=0$. The second inequality implies that $|\nu_{x_1,t_1}\cdot e_n| \leq  C \delta$. Choose $\delta$ sufficiently small so that $C \delta \leq 1/4$. Then
\begin{align*}
(C \delta)^2 \geq |\nu_{x_1,t_1}- e_n| ^2 
&\geq (1-C\delta)^2 - 2 \nu_{x_1,t_1} \cdot e_n +1 
\geq (C \delta)^2 + \tfrac{1}{2},
\end{align*}
which is a contradiction. Hence, we conclude that
$$F_{1/2}(u) = \big\{ (x', x_n, t) \in Q_{1/2} : x_n = g(x',t) \big \}$$
for some continuous function $g: B_{1/2}'\times (-1/4, 0] \to (-\delta, \delta)$.
We need to show that $$g\in C^{1,\alpha}(B_{1/2}' \times (-1/4,0])$$ with appropriate estimates. It is enough to see that for any $(x_0',t_0) \in B_{1/2}' \times (-1/4,0]$, we have
$g\in C^{1,\alpha}(x_0',t_0)$ with $\|g\|_{C^{1,\alpha}(x_0',t_0)}\leq C_0$, where $C_0>0$ depends on $n$, $a_+$, and $a_-$. 
Indeed, let $(x_0',t_0) \in B_{1/2}' \times (-1/4,0]$ and take $(x_0,t_0)=(x_0',g(x_0'),t_0)\in F_{1/2}(u)$. By  Theorem~\ref{thm:improvflat2}, we have that for any  $(x,t) \in F_{1/2}(u)\cap Q_r(x_0,t_0)$,
 \begin{equation} \label{eq:estg1}
 |\nu_{x_0,t_0}\cdot (x-x_0)| \leq C \delta r^{1+\alpha} \quad \hbox{for all}~r\leq \bar r. 
 \end{equation}
 For convenience, we write $(\nu', \nu_n)=\nu_{x_0,t_0}$. Note that $|\nu'|\leq 1+ C\delta$ and $\nu_n\geq 1-C\delta\geq 1/2 > 0$. 
The left-hand side of \eqref{eq:estg1} can be written as
\begin{equation} \label{eq:estg2}
 |\nu_{x_0,t_0}\cdot (x-x_0)| = \nu_n \Big|g(x',t)-g(x_0',t_0) + \frac{\nu'}{\nu_n} \cdot (x'-x_0')\Big|.
\end{equation}
Take $(x,t)=(x',g(x'),t) \in F_{1/2}(u)$ with $t\leq t_0$, and set $r= 2 d_p((x,t),(x_0,t_0))$. Then  $(x,t)\in Q_{r}(x_0,t_0)$ and combining \eqref{eq:estg1} and \eqref{eq:estg2}, we get 
\begin{align*}
\Big |g(x',t)-g(x_0',t_0) + \frac{\nu'}{\nu_n} \cdot (x'-x_0')\Big| & \leq 2^{2+\alpha} C \delta d_p((x,t),(x_0,t_0))^{1+\alpha}.
\end{align*}
Moreover, we can estimate the parabolic distance by
$$
\big (|x-x_0|^2+|t-t_0| \big)^{\tfrac{1+\alpha}{2}} \leq 2^\alpha \big( |x'-x_0'|^2+ |t-t_0| \big)^\frac{1+\alpha}{2} + 2^\alpha |g(x',t)-g(x_0',t_0)|^{1+\alpha},
$$
where we used that $(a^2+b^2)^{p/2} \leq (a+b)^p \leq 2^{p-1}(a^p+b^p)$ for any $a,b\geq 0$ and $p\geq 1$.
Hence, it remains to control the term $|g(x',t)-g(x_0',t_0)|$. From the above estimates, we have
\begin{align*}
    |g(x',t)-g(x_0',t_0)|
     &\leq |\nu_{x_0,t_0}-e_n||x-x_0| + |\nu_{x_0,t_0}\cdot (x-x_0)|\\
     & \leq C\delta r +  C\delta r^{1+\alpha}
      \leq 4 C \delta d_p((x,t),(x_0,t_0))\\
      & \leq  4C\delta \big(|x'-x_0'|^2 + |t-t_0| \big)^{1/2}+  4C\delta  |g(x',t)-g(x_0',t_0)|.
\end{align*}
Taking $\delta$ sufficiently small so that $4 C \delta \leq 1/2$, we conclude that
\begin{align*}
\Big|g(x',t)-g(x_0',t_0) + \frac{\nu'}{\nu_n} \cdot (x'-x_0')\Big| \leq C_0 \delta \big(|x'-x_0'|^2 + |t-t_0|\big)^{\tfrac{1+\alpha}{2}}
\end{align*}
for some $C_0>0$ depending only on $n$, $a_+$, and $a_-$. This implies that $$\nabla' g(x_0',t_0)= - \frac{\nu'}{\nu_n},$$ 
and $g\in C^{1,\alpha}(x_0',t_0)$ with $\|g\|_{C^{1,\alpha}(x_0' ,t_0)} \leq C_0$. 
\end{proof}

\section{Higher regularity of the free boundary} \label{sec:hodograph}

To improve the regularity of the free boundary from $C^{1,\alpha}$ to smooth (Theorem~\ref{thm1}), we will use the Hodograph transform; see \cite{KN, KNS}. We start with some preliminary results.

\begin{lem} \label{globalreg}
 There exists $\bar \delta>0$ such that if $u$ is a viscosity solution to \eqref{FBP} in $Q_1$, satisfying \eqref{flatness}, then there is some $\alpha \in(0,1)$ and $C>0$, depending on $n$, $a_+$, and $a_-$, such that
$$
 \|\nabla u\|_{C^{\alpha}(\overline{Q_{1/2}})} \leq C.
$$
\end{lem}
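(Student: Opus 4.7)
The plan is to combine the pointwise $C^{1,\alpha}$ information already extracted at free boundary points in Theorem~\ref{thm:improvflat2} and Corollary~\ref{cor:normal} with standard interior parabolic regularity for caloric functions in each phase, and then patch these together into a global estimate. I would not attempt to straighten $F(u)$ by a change of variables, because a parabolic $C^{1,\alpha}$ graph $x_n = g(x',t)$ need not have a classical time derivative $g_t$, which is problematic for the transformed coefficients.

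First I would record the boundary ingredients: by Theorem~\ref{thm:improvflat2}, at every $(x_0,t_0)\in F_{1/2}(u)$ there is a vector $\nu_{x_0,t_0}$ with $|\nu_{x_0,t_0}-e_n|\leq C\delta$ such that
\[
\sup_{Q_r(x_0,t_0)}\big|u(x,t)-\nu_{x_0,t_0}\cdot(x-x_0)\big|\leq C\delta\, r^{1+\alpha}\qquad\text{for all } r\leq \bar r,
\]
and by Corollary~\ref{cor:normal} the map $(x_0,t_0)\mapsto\nu_{x_0,t_0}$ is $\alpha$-H\"{o}lder on $F_{1/2}(u)$ with seminorm $\leq C\delta$. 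The key observation is that $\ell(x):=\nu_{x_0,t_0}\cdot(x-x_0)$ is \emph{simultaneously} caloric for both $a_+\partial_t-\Delta$ and $a_-\partial_t-\Delta$, since it is linear in $x$ and independent of $t$.

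Next, for an interior point $(x,t)\in Q_{1/2}\cap\{u>0\}$ (the other phase is symmetric), set $\rho=d_p((x,t),F(u))$ and pick $(x_0,t_0)\in F(u)$ realizing this distance. Then $w:=u-\nu_{x_0,t_0}\cdot(x-x_0)$ solves $a_+w_t-\Delta w=0$ on $Q_{\rho/2}(x,t)\subset\{u>0\}$, and the inclusion $Q_{\rho/2}(x,t)\subset Q_{2\rho}(x_0,t_0)$ combined with the pointwise bound above gives $\|w\|_{L^\infty(Q_{\rho/2}(x,t))}\leq C\delta\rho^{1+\alpha}$. Standard scaled interior estimates for the heat equation then yield
\[
[\nabla u]_{C^{0,\alpha}(Q_{\rho/4}(x,t))}=[\nabla w]_{C^{0,\alpha}(Q_{\rho/4}(x,t))}\leq C\rho^{-(1+\alpha)}\|w\|_{L^\infty}\leq C\delta,
\]
and $|\nabla u(x,t)-\nu_{x_0,t_0}|\leq C\rho^{-1}\|w\|_{L^\infty}\leq C\delta\rho^\alpha$. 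Points with $\rho$ bounded away from $0$ pose no problem because $u$ is smooth there with bounds from the flatness hypothesis.

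Finally, to deduce the global bound I would patch by triangle inequality. Given $p_1=(x_1,t_1),p_2=(x_2,t_2)\in Q_{1/2}$ with $d=d_p(p_1,p_2)$ and $\rho_i=d_p(p_i,F(u))$, either $d\leq\tfrac14\min(\rho_1,\rho_2)$, in which case $p_1,p_2$ lie in the same phase and inside a common cylinder where the interior seminorm bound $[\nabla u]_{C^{0,\alpha}}\leq C\delta$ applies directly; or else $\rho_1,\rho_2\leq C d$ (using $|\rho_1-\rho_2|\leq d$), and picking nearest free boundary points $q_i$ we estimate
\[
|\nabla u(p_1)-\nabla u(p_2)|\leq |\nabla u(p_1)-\nu_{q_1}|+|\nu_{q_1}-\nu_{q_2}|+|\nu_{q_2}-\nabla u(p_2)|\leq C\delta d^\alpha,
\]
using Step 2 for the outer terms and Corollary~\ref{cor:normal} together with $d_p(q_1,q_2)\leq \rho_1+d+\rho_2\leq Cd$ for the middle one. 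The $L^\infty$ part of the $C^{0,\alpha}$ norm is immediate from $|\nabla u-e_n|\leq C\delta$ on $F_{1/2}(u)$ and the closeness estimate above. The main technical subtlety is keeping track of the parabolic scaling so that the $\rho^{1+\alpha}$ smallness from the boundary expansion exactly absorbs the $\rho^{-(1+\alpha)}$ blow-up in the interior Schauder estimate, producing uniform constants independent of $\rho$; this is where the correct exponents from Theorem~\ref{thm:improvflat2} become essential.
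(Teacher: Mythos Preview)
Your approach is correct and is essentially the same as the paper's: both subtract the tangent plane $\nu_{x_0,t_0}\cdot(x-x_0)$ (caloric for either heat operator), apply interior estimates for the heat equation to absorb the $\rho^{-(1+\alpha)}$ blow-up against the $\rho^{1+\alpha}$ decay from Theorem~\ref{thm:improvflat2}, and then patch with Corollary~\ref{cor:normal}. The paper organizes the interior--to--boundary step as a dyadic telescoping sum along a chain $y_j\to x_0$, whereas you use a direct Whitney-type case split on $d$ versus $\min(\rho_1,\rho_2)$; these are interchangeable.

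One technical point needs attention. Your claimed inclusion $Q_{\rho/2}(x,t)\subset Q_{2\rho}(x_0,t_0)$ can fail: parabolic cylinders extend only \emph{backward} in time, so if the nearest free boundary point has $t_0<t$ (which can happen), then $(x,t)\notin Q_r(x_0,t_0)$ for any $r$, and Theorem~\ref{thm:improvflat2} gives no control at $(x,t)$. The paper avoids this by taking the free boundary reference point at the \emph{same} time slice, $t_0=t$; such a point always exists by the flatness assumption (the zero set meets every time slice within $\{|x_n|\le\delta\}$), and its spatial distance to $x$ is comparable to the parabolic distance $\rho$. With that choice of $(x_0,t_0)$ your argument goes through unchanged.
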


\begin{proof}
By Remark~\ref{rem:normal} and Corollary~\ref{cor:normal}, we know that 
$\sup_{F_{1/2}(u)}|\nabla u|\leq C$ and
\begin{equation}\label{gradest}
|\nabla u(x_1,t_1) - \nabla u(x_2,t_2)| \leq C d_p((x_1,t_1),(x_2,t_2))^\alpha
\end{equation}
for any $(x_1,t_1),(x_2,t_2)\in F_{1/2}(u)$, and some $\alpha \in (0,1)$ and $C>0$ depending on $n$, $a_+$, and~$a_-$. We need to prove this estimate for all points in $Q_{1/2}$. 

Note that if $v$ is a solution to the heat equation 
\begin{equation} \label{eq:heat}
a w_t-\Delta w=0 \quad \hbox{in}~Q_{1/2} \quad \hbox{with}~a>0,
\end{equation}
then by classical interior estimates, we have that for all cylinders $Q_{2r}(x_0,t_0)\subset Q_{1/2}$, 
\begin{equation} \label{intest1}
    \sup_{Q_r(x_0,t_0)}|\nabla v|\leq \frac{C_0}{r} \sup_{Q_{2r}(x_0,t_0)} |v|.
\end{equation}
Moreover, there is some constant $\gamma\in (0,1)$, depending on $n$ and $a$, such that
\begin{equation} \label{intest2}
    \underset{Q_{r/2}(x_0,t_0)}{\osc} D_i v \leq \gamma \underset{Q_{r}(x_0,t_0)}{\osc} D_i v \leq \frac{2\gamma C_0}{r}  \sup_{Q_{2r}(x_0,t_0)} |v|
\end{equation}
for all $i=1,\hdots, n$, where we used \eqref{intest1} and the fact that $D_i v = \frac{\partial v}{\partial x_i}$ is also a solution of \eqref{eq:heat}. 
Therefore, by a simple triangle inequality argument, it is enough to prove \eqref{gradest} for any $(x_0,t_0)\in F_{1/2}(u)$ and $(x_1,t_1)\in Q_{1/2}^+(u)$ with $t_0=t_1$.

Let $r=2|x_1-x_0|$ so that $(x_1,t_1) \in Q_r(x_0,t_0)$.
Without loss of generality, we may assume that $r<\bar{r}/4$, where $\bar{r}$ is given in  Theorem~\ref{thm:improvflat2}.
Let $r_j=2^{-j} r$ and take a sequence of points $\{y_j\}_{j\geq 1} \subset  Q_{1/2}^+(u)$ with $y_1=x_1$, such that
\begin{equation} \label{points}
    |y_j-y_{j+1}|\leq r_j/2 \quad \hbox{and} \quad y_j \to x_0 \quad \hbox{as} \ j\to \infty.
\end{equation}
Consider the function $v=u-l_{x_0,t_0}$, where $l_{x_0,t_0}(x)=\nabla u(x_0,t_0)\cdot(x-x_0)$. Then by \eqref{intest2} and \eqref{points}, for any $i=1,\hdots,n$, we get
\begin{align*}
D_i v(x_1,t_1)- D_i v(x_0,t_0) & = \sum_{j=1}^\infty \big( D_i v(y_j,t_0)- D_i v(y_{j+1},t_0)\big)\\
&\leq \sum_{j=1}^\infty \underset{Q_{r_j/2}(y_j,t_0)}{\osc} D_i v \\
& \leq 2\gamma C_0 \sum_{j=1}^\infty \frac{1}{r_j} \sup_{Q_{2r_j}(y_j,t_0)}|v|\\
&\leq 2\gamma C_0  \sum_{j=1}^\infty \frac{1}{r_j} \sup_{Q_{4 r_j}(x_0,t_0)}|u-l_{x_0,t_0}|\\
& \leq  2\gamma C C_0 \delta \sum_{j=1}^\infty\frac{1}{r_j} (4 r_j)^{1+\alpha}=C_1 |x_1-x_0|^\alpha,
\end{align*}
where in the last inequality we used Theorem~\ref{thm:improvflat2}, since $4 r_j < \bar r$. Finally, we observe that $\nabla v(x_0,t_0)=0$ and $\nabla v(x_1,t_1)=\nabla u(x_1,t_1)-\nabla u(x_0,t_0)$. Therefore,
$$
|\nabla u(x_1,t_1)-\nabla u(x_0,t_0)| 
\leq C d_p((x_1,t_1),(x_0,t_0))^\alpha,
$$
where $C>0$ depends only on $n$, $a_+$, and $a_-$.
\end{proof}

\begin{lem} \label{monotonicity}
Under the previous assumptions, for any $\lambda \in (0,1/2)$, there is some $\rho \in (0,1/4)$ such that 
$
u_{n}(x,t) \geq \lambda >0 \, \hbox{for any}~(x,t)\in Q_\rho.
$
\end{lem}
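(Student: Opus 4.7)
The plan is to combine three pieces of information already established in this section: the pointwise identity $\nabla u(x_0,t_0)=\nu_{x_0,t_0}$ at free boundary points (Remark~\ref{rem:normal}), the flatness bound $|\nu_{x_0,t_0}-e_n|\leq C\delta$ from Theorem~\ref{thm:improvflat2}, and the global H\"older control $\|\nabla u\|_{C^{\alpha}(\overline{Q_{1/2}})}\leq C$ from Lemma~\ref{globalreg}. Heuristically, $u_n\approx 1$ on $F(u)$, the free boundary is trapped in the $\delta$-strip $\{|x_n|\leq \delta\}$, and H\"older regularity up to the interface lets us propagate this to a small cylinder about the origin.

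More concretely, I would first observe that for every $(x_0,t_0)\in F_{1/2}(u)$,
$$
u_n(x_0,t_0)=e_n\cdot\nu_{x_0,t_0}\geq 1-|\nu_{x_0,t_0}-e_n|\geq 1-C\delta.
$$
Next, by Theorem~\ref{thm2}, $F(u)\cap Q_{1/2}$ is the graph of a $C^{1,\alpha}$ function $g:B_{1/2}'\times(-1/4,0]\to(-\delta,\delta)$. For $\rho\in(0,1/4)$ to be chosen and $(x,t)\in Q_\rho$, the point $(x_0,t_0):=(x',g(x',t),t)$ lies in $F_{1/2}(u)$ (once $\bar\delta$ and $\rho$ are small enough), and
$$
d_p((x,t),(x_0,t_0))=|x_n-g(x',t)|\leq \rho+\delta.
$$
Applying the H\"older estimate of Lemma~\ref{globalreg} to the component $u_n$ of $\nabla u$ then gives
$$
u_n(x,t)\geq u_n(x_0,t_0)-C\,d_p((x,t),(x_0,t_0))^{\alpha}\geq 1-C\delta-C(\rho+\delta)^{\alpha}.
$$

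Given $\lambda\in(0,1/2)$, I would choose $\bar\delta$ and $\rho$ small, depending on $\lambda$, $n$, $a_+$, and $a_-$, so that $C\delta+C(\rho+\delta)^{\alpha}\leq 1-\lambda$; this forces $u_n\geq \lambda$ on $Q_\rho$, as desired. The only mild subtlety is that the argument may require shrinking the flatness threshold $\bar\delta$ of Lemma~\ref{globalreg} further, consistently with the running convention that $\bar\delta$ is progressively refined as the section proceeds. I do not anticipate a serious obstacle here, since all of the heavy lifting (the improvement-of-flatness-based $C^{1,\alpha}$ regularity of the free boundary and the global H\"older estimate for $\nabla u$) has already been carried out in the preceding results.
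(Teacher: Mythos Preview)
Your proof is correct and follows essentially the same approach as the paper: bound $u_n$ from below at free boundary points via $|\nu_{x_0,t_0}-e_n|\leq C\delta$, then use the H\"older estimate from Lemma~\ref{globalreg} to propagate this to a small cylinder. The only cosmetic difference is that the paper simply assumes (after a translation) that $(0,0)\in F_{1/2}(u)$ and compares $u_n(x,t)$ to $u_n(0,0)$ directly, rather than projecting each point onto the free boundary graph as you do.
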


\begin{proof}
For any $(x_0,t_0)\in F_{1/2}(u)$, taking $\delta>0$ sufficiently small, we know that 
$$
u_{n}(x_0,t_0) =  \nu_{x_0,t_0}\cdot e_n \geq 1- C\delta \geq 1/2>0.
$$
Without loss of generality, assume that $(0,0)\in F_{1/2}(u)$. 
Fix $\rho\in (0,1/4)$ to be chosen small. 
By Lemma~\ref{globalreg}, for any $(x,t)\in Q_\rho$, we have
$$
|u_{n}(x,t) - u_{n}(0,0)|\leq [\nabla u]_{C^{\alpha}(\overline{Q_{1/2}})} (|x|^2+|t|)^\alpha \leq C \rho^\alpha.
$$
Then for any $\lambda\in (0,1/2)$, we can choose $\rho$ sufficiently small such that
$$
u_{n}(x,t) \geq u_{n}(0,0) - C\delta \rho^\alpha \geq 1/2- C \rho^\alpha\geq \lambda >0.
$$
\end{proof}

By the previous lemma, $u$ is monotone in the $e_n$-direction in a parabolic cylinder $Q_{\rho}$, for some small $\rho\in(0,1/4)$. Consider the change of variables 
$$
 \psi(x,t)=(x',u(x,t),t) =(y,t), 
$$
which is well defined in $Q_\rho$ (i.e., $\psi$ is one-to-one). 
We define the Hodograph transform as the function $h : \psi(Q_\rho)\to (-\rho,\rho)$ given implicitly by
\begin{equation} \label{hodograph}
h(y,t)=h(\psi(x,t)))=x_n.
 \end{equation}
By construction, the free boundary of $u$ is a graph in the $e_n$-direction, parametrized by $h$ restricted to the set $\{y_n=0\}$. Moreover, their derivatives are related as follows:
\begin{align*}
u_t & = - \frac{h_t}{h_n}, \\
\nabla u & = -\frac{1}{h_n} (\nabla' h, - 1),\\
D^2 u & = - \frac{1}{h_n} A(\nabla h)^T D^2 h A (\nabla h),
\end{align*}
where $A(p) = (a_{ij}(p))_{1\leq i,j\leq n}$, with $a_{ij}=\delta_{ij}$ if $1\leq i,j<n$, $a_{in}(p)=0$, if $1\leq i<n$, $a_{nj}(p)=-p_j/p_n$ if $1\leq j<n$ and $a_{nn}(p)=1/p_n$, for $p\in \Rn$ with $p_n \neq 0$.

The following proposition shows that the free boundary problem \eqref{FBP} for $u$ becomes a nonlinear transmission problem for $h$. Moreover, $h$ inherits the regularity properties of $u$.

\begin{prop}[Properties of $h$] \label{proph}
Fix $\lambda \in (0,1/2)$. 
Let $h$ be the function given in \eqref{hodograph}. 
\begin{enumerate}[$(i)$]
\item (Regularity).~ There are $\sigma>0$ small and $C_0>0$ such that $h\in C^\infty(Q_\sigma \setminus \Gamma)\cap C^{1,\alpha}(\overline{Q_\sigma})$, where $\Gamma = Q_\sigma \cap \{y_n=0\}$, and 
$$
\|\nabla h\|_{C^{\alpha}(\overline{Q_\sigma})} \leq C_0 \quad \hbox{and} \quad h_{n}(y,t)\geq \lambda^{-1} >0.
$$
\item Let $B(p)=A(p)^TA(p)=(b_{ij}(p))_{1\leq i,j \leq n}$. Then $B(\nabla h)$ is uniformly elliptic in $Q_\sigma$, i.e.,
$$
\Lambda^{-1} |\xi|^2 \leq b_{ij}(\nabla h)\xi_i \xi_j  \leq  \Lambda |\xi|^2
$$
for all $\xi\in \R^n$ and $(y,t)\in Q_\sigma$, where $\Lambda >1$.
\item The function  $h$ satisfies the nonlinear transmission problem
\begin{equation} \label{TP}
\begin{cases}
a_+h_t - b_{ij}(\nabla h) D_{ij} h =0 & \hbox{in}~Q_{\sigma}^+,\\
 a_- h_t - b_{ij}(\nabla h) D_{ij} h =0 & \hbox{in}~Q_{\sigma^-} ,\\
 h_n^+ = h_n^- & \hbox{on}~\Gamma,
\end{cases}
\end{equation}
in the classical sense.
\end{enumerate}
\end{prop}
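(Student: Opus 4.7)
The approach is to transfer the regularity and PDE structure of $u$ through the change of variables $\psi$ by combining the inverse function theorem and the chain rule; the essential inputs are the global $C^{1,\alpha}$ regularity of $u$ from Lemma~\ref{globalreg}, the pointwise monotonicity $u_n \geq \lambda > 0$ from Lemma~\ref{monotonicity}, and the classical free boundary identity $|\nabla u^+| = |\nabla u^-|$ on $F(u)$ (Remark~\ref{rem:normal}).

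\textbf{Step 1 (Regularity of $h$).} Since $u_n \geq \lambda$ in $Q_\rho$, the Jacobian $\det D_x \psi = u_n$ is uniformly nonzero, so together with $u \in C^{1,\alpha}(\overline{Q_{1/2}})$ the $C^{1,\alpha}$ inverse function theorem (in the parabolic metric of Section~\ref{sec:prelim}) produces a $C^{1,\alpha}$ inverse of $\psi$ near $(0,0)$. Choose $\sigma \in (0,\rho)$ so that $\overline{Q_\sigma} \subset \psi(Q_\rho)$; then $h=(\psi^{-1})_n \in C^{1,\alpha}(\overline{Q_\sigma})$. Differentiating the defining relation $y_n = u(y',h(y,t),t)$ yields
\begin{equation*}
h_n=\tfrac{1}{u_n}, \qquad \nabla' h=-\tfrac{\nabla' u}{u_n}, \qquad h_t=-\tfrac{u_t}{u_n},
\end{equation*}
and the estimate $\|\nabla h\|_{C^\alpha(\overline{Q_\sigma})}\leq C_0$ together with $0<c\leq h_n\leq \lambda^{-1}$ follow from Lemma~\ref{globalreg} and $u_n\in[\lambda,\|\nabla u\|_\infty]$. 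Smoothness off $\Gamma$ comes from the fact that $u$ solves a linear heat equation with constant coefficients classically in each of $Q_\rho^\pm(u)$ and is therefore $C^\infty$ by interior parabolic regularity; the restriction of $\psi$ to each phase is a $C^\infty$ diffeomorphism onto $Q_\sigma^\pm$, so $h\in C^\infty(Q_\sigma\setminus\Gamma)$.

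\textbf{Step 2 (Uniform ellipticity).} The explicit form of $A(p)$ yields $\det A(p)=1/p_n$, and both $\|A(p)\|$ and $\|A(p)^{-1}\|$ are bounded continuously whenever $p_n$ lies in a compact subinterval of $(0,\infty)$ and $|p'|$ is bounded. By Step~1 the range of $\nabla h$ on $\overline{Q_\sigma}$ is such a compact set, hence $B(\nabla h)=A(\nabla h)^T A(\nabla h)$ is symmetric positive definite with eigenvalues in $[\Lambda^{-1},\Lambda]$ for some $\Lambda>1$.

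\textbf{Step 3 (PDE and transmission condition).} In each phase $Q_\sigma^\pm$, $h$ is smooth and $u$ satisfies $a_\pm u_t-\Delta u=0$ classically. Substituting the chain-rule identities from the proposition and using $\mathrm{tr}(A(\nabla h)^T D^2 h\, A(\nabla h))=b_{ij}(\nabla h)D_{ij}h$ converts the heat equation, after multiplication by $-h_n$, into $a_\pm h_t - b_{ij}(\nabla h)D_{ij}h=0$ classically on each side of $\Gamma$. The transmission condition on $\Gamma$ is simply the continuity of $h_n$ across $\Gamma$, which is part of the $C^{1,\alpha}$ regularity from Step~1; equivalently it is the hodograph translate of $|\nabla u^+|=|\nabla u^-|$, since $|\nabla u|^2=(|\nabla' h|^2+1)/h_n^2$ and tangential continuity $\nabla' h^+ = \nabla' h^-$ on $\Gamma$ reduces the Bernoulli identity to $(h_n^+)^2=(h_n^-)^2$, and positivity gives $h_n^+=h_n^-$.

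\textbf{Main obstacle.} No individual step is substantial: the proposition is really the packaging of standard chain-rule and inverse-function-theorem manipulations. The only genuine conceptual point is recognizing that the free boundary condition of $u$ is precisely what forces $\nabla u$ to be continuous across $F(u)$ (since $u=0$ on $F(u)$ kills all tangential derivatives, reducing the Bernoulli condition to equality of the normal components), and that this continuity, once transported through $\psi$, is exactly what yields the transmission condition for $h$. One must also be careful that the $C^{1,\alpha}$ inverse function theorem is applied in the parabolic sense, so that the resulting $h$ has the regularity required by the Schauder theory of Section~\ref{sec:transmission}, which is what will ultimately power the bootstrap argument needed for Theorem~\ref{thm1}.
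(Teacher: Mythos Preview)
Your proposal is correct and follows essentially the same route as the paper: both arguments pull back the regularity of $u$ through $\psi$ using Lemma~\ref{globalreg} and Lemma~\ref{monotonicity} for part~$(i)$, deduce ellipticity of $B(\nabla h)$ from the bounds on $\nabla h$ for part~$(ii)$ (the paper writes out the entries $b_{ij}(p)$ explicitly, you argue via boundedness of $A(p)$ and $A(p)^{-1}$), and verify $(iii)$ by direct substitution of the chain-rule identities. Your observation that the transmission condition $h_n^+=h_n^-$ is simply the $C^{1,\alpha}$ continuity of $\nabla h$ across $\Gamma$ is a nice clarification the paper leaves implicit; note also that your bound $0<c\le h_n\le \lambda^{-1}$ is the mathematically correct one (the paper's displayed inequality $h_n\ge\lambda^{-1}$ appears to be a typographical slip, since $h_n=u_n^{-1}$ and $u_n\ge\lambda$).
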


\begin{proof}
$(i)$ Since $\psi$ is continuous and $\psi(0,0)=(0,0)$, there exists some $\sigma>0$ small such that $Q_\sigma \subset \psi(Q_\rho)$.
Clearly, $u$ is smooth in $\{\pm u >0\}$. Hence, $h$ is smooth in $Q_\sigma^\pm$. By Lemma~\ref{monotonicity}, we know that 
$h_{n} = u_{n}^{-1} \geq \lambda^{-1}$ in $Q_\sigma$. 
 Moreover, by Lemma~\ref{globalreg}, 
 $$
 \| \nabla h\|_{C^{\alpha}(\overline{Q_\sigma})} \leq \lambda \|\nabla u\|_{C^{\alpha}(\overline{Q_\rho})} \leq C_0.
 $$
 
 $(ii)$ We have that 
 $$
 b_{ij}(p) =a_{ji}(p)a_{ij}(p) =
 \begin{cases}
 \delta_{ij} & \hbox{if} ~1\leq i,j<n,\\
 p_i^2/p_n^2 & \hbox{if}~1\leq i <n ~\hbox{and}~ j=n,\\
 p_j^2/p_n^2 & \hbox{if}~i=n ~\hbox{and}~ 1\leq j<n,\\
 (1+|p'|^2)/p_n^2 & \hbox{if}~i=j=n.
 \end{cases}
 $$
Hence, $b_{ij}(p)\xi_i\xi_j =|\xi'|^2 + 2\sum_{i=1}^{n-1} p_i^2/p_n^2 \xi_i \xi_n + (1+|p'|^2)/p_n^2 \xi_n^2$. 
By $(i)$, we see that
$$
\Lambda^{-1} |\xi|^2 \leq b_{ij}(\nabla h)\xi_i \xi_j  \leq  \Lambda |\xi|^2,
$$
 for some constant $\Lambda>1$ depending on $C_0$ and $\lambda$.

$(iii)$ From the above computations, it immediately follows that $h$ satisfies \eqref{TP} in the classical sense, since we already proved that $u$ is a classical solution of \eqref{FBP}.
\end{proof}

\begin{rem}
Observe that the local regularity of the free boundary $F(u)$ is equivalent to the local regularity of $h$ on the \textit{fixed} interface $\Gamma=Q_\sigma \cap  \{x_n=0\}$. Therefore, Theorem~\ref{thm1} is a straightforward consequence of the following result.
 \end{rem}

\begin{thm} 
The function $h$ given in \eqref{hodograph} is locally smooth in space and time on $\Gamma$.
\end{thm}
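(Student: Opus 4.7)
The plan is to bootstrap from the $C^{1,\alpha}$ regularity of $h$ to full smoothness up to $\Gamma$ by applying an iterated version of the Schauder theory of Section~\ref{sec:transmission}. Since Proposition~\ref{proph}(i) already gives interior smoothness of $h$ on $Q_\sigma^\pm$, only the boundary regularity at $\Gamma$ needs to be addressed.

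\textbf{Base case.} First I would view the nonlinear problem \eqref{TP} as a linear one by freezing: set $a_{ij}(x,t) := b_{ij}(\nabla h(x,t))$, so that $a_{ij} \in C^{0,\alpha}(\overline{Q_\sigma})$, the uniform ellipticity \eqref{ellipt} is inherited from Proposition~\ref{proph}(ii), and $h_n^+ = h_n^-$ on $\Gamma$. Theorem~\ref{thm:bootstrap} applied with $f_\pm \equiv 0$ then gives $h \in C^{2,\alpha}(\overline{Q_{\sigma'}^+}) \cap C^{2,\alpha}(\overline{Q_{\sigma'}^-})$ for some $\sigma' < \sigma$.

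\textbf{Induction.} Next I would induct on $k \geq 2$: assuming $h \in C^{k,\alpha}$ on $\overline{Q_{\sigma_k}^\pm}$, differentiate \eqref{TP} in a spatial tangential direction $x_j$ with $j < n$. The function $v := D_j h$ is then continuous across $\Gamma$ (since $h \in C^{1,\alpha}$), satisfies $v_n^+ = v_n^-$ on $\Gamma$ (by tangentially differentiating $h_n^+ = h_n^-$), and solves on each side the linear equation
\begin{equation*}
a_\pm v_t - b_{ij}(\nabla h)\, D_{ij} v = \partial_{p_l} b_{ij}(\nabla h)\, D_{ij} h \cdot D_l v.
\end{equation*}
The coefficients and first-order term have $C^{k-2,\alpha}$ regularity. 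A higher-order analog of Theorem~\ref{thm:bootstrap}, obtained by the same tangential-derivative bootstrap applied to the \emph{linear} transmission problem (with drift absorbed into the right-hand side), then yields $v \in C^{k,\alpha}$ on each side; equivalently, every $(k+1)$-st derivative of $h$ involving at least one tangential $x'$-direction is Hölder up to $\Gamma$. The remaining pure-normal and pure-time derivatives are recovered algebraically from the PDE,
\begin{equation*}
h_t = \tfrac{1}{a_\pm}\, b_{ij}(\nabla h)\, D_{ij} h, \qquad D_{nn} h = \tfrac{1}{b_{nn}(\nabla h)}\Bigl[a_\pm h_t - \!\sum_{(i,j)\neq(n,n)}\! b_{ij}(\nabla h)\, D_{ij} h\Bigr],
\end{equation*}
together with their tangential derivatives (using $b_{nn}(\nabla h) > 0$ from the ellipticity). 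Iterating yields $h \in C^\infty(\overline{Q_{r}^\pm})$ on a small cylinder around each interface point, which restricted to $\Gamma$ gives the claimed smoothness of the parametrization of $F(u)$.

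\textbf{Main obstacle.} The subtle point, flagged in the introduction, is \emph{time} regularity. One cannot naively differentiate \eqref{TP} in $t$ and apply Schauder to $w := h_t$: subtracting the two equations at $\Gamma$ forces $a_+ h_t^+ - a_- h_t^- = b_{nn}(\nabla h)\,(D_{nn} h^+ - D_{nn} h^-)$, which is nonzero when $a_+ \ne a_-$, so $w$ jumps across $\Gamma$ and fails to be a transmission solution in the sense of Definition~\ref{def:viscflat}. This forces the Schauder-based bootstrap to be confined to spatial tangential directions $D_{x'}$, for which continuity across $\Gamma$ is automatic from $h \in C^{1,\alpha}$, and time regularity must be recovered at each stage algebraically by differentiating the identity $h_t = \tfrac{1}{a_\pm}b_{ij}(\nabla h)D_{ij}h$ on each side separately.
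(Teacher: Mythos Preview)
Your base case and the spatial tangential bootstrap are correct and match the paper's argument essentially verbatim: freeze $a_{ij}=b_{ij}(\nabla h)\in C^{0,\alpha}$, apply Theorem~\ref{thm:bootstrap} to get piecewise $C^{2,\alpha}$, then differentiate \eqref{TP} in $x_j$ for $j<n$ and iterate.

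The gap is in the time regularity, and it has two layers. First, your diagnosis of the obstacle is off. After the Hodograph transform the interface $\Gamma=\{y_n=0\}$ is a \emph{fixed} hyperplane, so $t$ is a tangential variable: since $h$ is continuous, $h_t^+=h_t^-$ on $\Gamma$, and since $h_n^+=h_n^-$ one also has $(h_t)_n^+=D_t h_n^+=D_t h_n^-=(h_t)_n^-$. Thus $w=h_t$ \emph{does} satisfy the transmission structure of Definition~\ref{def:viscflat}; your identity $a_+h_t^+-a_-h_t^-=b_{nn}(D_{nn}h^+-D_{nn}h^-)$ is correct but forces $D_{nn}h$, not $h_t$, to jump. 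The real obstruction, as the paper spells out, is that the $t$--differentiated equation has right-hand side $\partial_{p_l}b_{ij}(\nabla h)\,D_tD_l h\,D_{ij}h$, and $D_tD_n h$ is not available: from $h\in C^{2,\alpha}$ one only gets $\nabla h\in C_t^{(1+\alpha)/2}$, not $D_t\nabla h\in L^\infty$.

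Second, your proposed ``algebraic recovery'' does not close. From the equation you get $D_{nn}h=\frac{1}{b_{nn}}\bigl(a_\pm h_t-\sum_{(i,j)\neq(n,n)}b_{ij}D_{ij}h\bigr)$, where the sum is piecewise $C^{1,\alpha}$ after the tangential step. This says only that $D_{nn}h$ and $h_t$ differ by a $C^{1,\alpha}$ function; it does not upgrade either one separately. Differentiating in $x_n$ gives $D_{nnn}h-\frac{a_\pm}{b_{nn}}D_nh_t\in C^{0,\alpha}$, again one relation between two unknown quantities, and the analogous statement for $[h_t]_{C_t^{(1+\alpha)/2}}$ versus $[D_{nn}h]_{C_t^{(1+\alpha)/2}}$ is equally circular. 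No further algebraic identity is available to break this loop.

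The paper breaks it with half-order backward difference quotients in time: $\partial_\tau^{1/2}h(y,t)=\tau^{-1/2}\bigl(h(y,t)-h(y,t-\tau)\bigr)$ is continuous across $\Gamma$, satisfies the transmission condition, and solves \eqref{limitpb} with right-hand side $f_\pm=\partial_\tau^{1/2}a_{ij}\cdot D_{ij}h^\pm(\cdot,\cdot-\tau)$. The key computation is that $\partial_\tau^{1/2}a_{ij}$ is piecewise $C^{0,\alpha}$ \emph{uniformly in $\tau$} (this uses that $a_{ij}$ is piecewise $C^{1,\alpha}$, hence approximated to order $1+\alpha$ by affine functions of $y$ alone). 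Theorem~\ref{thm:bootstrap} then gives $\partial_\tau^{1/2}h\in C^{2,\alpha}$ uniformly, so $\partial_\tau^{1/2}\nabla h\in C^{1,\alpha}$ uniformly; iterating once more and passing to the limit yields $D_t\nabla h$ piecewise $C^{0,\alpha}$. At that point one \emph{can} differentiate \eqref{TP} in $t$ and apply Theorem~\ref{thm:bootstrap} to $h_t$. This fractional-difference-quotient step is the missing ingredient in your outline.
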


\begin{proof}
We may assume that $\sigma=1$. Otherwise, the function
$\sigma^{-1} h(\sigma y, \sigma^2 t)$ satisfies \eqref{TP} in $Q_1$. We need to show that $h|_\Gamma \in C^k_{loc}$ for any $k\geq 1$.
To this end, it is enough to see that for any $(y_0, t_0)\in Q_{1/2}\cap \{y_n=0\}$,
$$
D_{y'}^m D_t^l h|_\Gamma \in C^\alpha(y_0,t_0),
$$
for any multi-index $m\in \R^{n-1}$ and integer $l\geq 0$, where $\alpha\in (0,1)$ is given in Lemma~\ref{gradest}. For simplicity, we may assume that $(y_0,t_0)=(0,0)$. 
First, we establish the regularity in space. We claim that
\begin{equation} \label{improvregk}
D_{y'}^m h |_\Gamma \in C^{\alpha}(0,0) \quad\hbox{for all}~ |m|=k \ \hbox{and} \ k\geq 1.
\end{equation}
Indeed, the case $k=1$ follows from Proposition~\ref{proph}. In fact, we deduce the stronger regularity condition,  $\nabla h\in C^{\alpha}(\overline{Q_1})$ with $\|\nabla h\|_{C^{0,\alpha}(\overline{Q_1})}\leq C_0$ and $h_{n} \geq \lambda^{-1}>0$. 
Define 
\begin{equation} \label{aij}
a_{ij}(y,t) = b_{ij}(\nabla h(y,t)) \quad\hbox{for}~ (y,t)\in \overline{Q_1}.
\end{equation}
Then $a_{ij}\in C^{\alpha}(\overline{Q_1})$ with $\|a_{ij}\|_{C^\alpha(\overline{Q_1})}\leq  C$.
In particular, we are under the assumptions of Theorem~\ref{thm:bootstrap} with $f_+\equiv f_- \equiv 0$. Therefore, we have that
\begin{equation} \label{k=2}
h\in C^{2,\alpha}(\overline{Q_{1/2}^+}) \cap C^{2,\alpha}(\overline{Q_{1/2}^-}) 
\quad \hbox{with} \quad
\|h\|_{C^{2,\alpha}(\overline{Q_{1/2}^+})} + \|h\|_{C^{2,\alpha}(\overline{Q_{1/2}^-})}\leq C, 
\end{equation}
and thus, \eqref{improvregk} holds for $k=2$. It follows that
\begin{equation} \label{coef2}
a_{ij}\in C^{1,\alpha}(\overline{Q_{1/2}^+}) \cap C^{1,\alpha}(\overline{Q_{1/2}^-}) 
\quad \hbox{with} \quad
\|a_{ij}\|_{C^{1,\alpha}(\overline{Q_{1/2}^+})} + \|a_{ij}\|_{C^{1,\alpha}(\overline{Q_{1/2}^-})}\leq C.
\end{equation}
Let $w= D_{y'}^\mu h$ with $|\mu|=1$. Differentiating the equations in \eqref{TP}, we see that $w$ satisfies \eqref{limitpb} in $Q_{1/2}$, with $a_{ij}(y,t)$ as in \eqref{aij}, and right-hand sides given by
\begin{align*}
f_+ (y,t) & = D_{y'}^\mu a_{ij}(y,t) D_{ij} h^+(y,t) \quad\hbox{for}~(y,t)\in \overline{Q_{1/2}^+},\\
f_- (y,t) & = D_{y'}^\mu a_{ij}(y,t) D_{ij} h^-(y,t) \quad\hbox{for}~(y,t)\in \overline{Q_{1/2}^-},
\end{align*}
where $h^\pm = h|_{\overline{Q_{1/2}^\pm}}$.
By \eqref{k=2} and \eqref{coef2},  we have that $f_+ \in C^\alpha(\overline{Q_{1/2}^+})$ and $f_- \in C^\alpha(\overline{Q_{1/2}^-})$. Moreover, by taking the even reflection across $y_n=0$, we can extend the functions $f_+$ and $f_-$ in a $C^{0,\alpha}$-fashion to all of $Q_{1/2}$.
Applying Theorem~\ref{thm:bootstrap} to $w(x/2, t/4)$, we see that 
$$
w=D_{y'}^\mu h\in C^{2,\alpha}(\overline{Q_{1/4}^+}) \cap C^{2,\alpha}(\overline{Q_{1/4}^-}).
$$
Hence, \eqref{improvregk} holds for $k=3$. We observe that, at each step, we gain one more tangential derivative in space, which allows us to differentiate the equations in \eqref{TP} again. Therefore, iterating this argument, we conclude \eqref{improvregk} for any $k\geq 1$.

Next, we deal with the time regularity. In this case, the previous argument fails since we cannot differentiate \eqref{aij} with respect to $t$. Indeed, by \eqref{k=2}, we only know that
\begin{equation} \label{timereg}
    h\in C^{2,\alpha} \implies \nabla h \in C^{1,\alpha} \implies \nabla h \in C_t^{0,(1+\alpha)/2},
\end{equation}
which is not enough regularity to compute $D_t \nabla h$.
For $\tau \in (0,1/16)$ fixed, we consider the backward-in-time incremental quotients:
$$
\partial_\tau^{1/2} h (y,t) = \frac{h(y, t)- h(y, t-\tau)}{\tau^{1/2}} \quad \hbox{for}~(y,t)\in Q_{1/2}.
$$
By \eqref{k=2}, it is clear that  $\|\partial_\tau^{1/2} h\|_{L^\infty(Q_{1/2})} \leq C$.
Moreover, $\partial_\tau^{1/2} h $ satisfies \eqref{limitpb} in $Q_{1/4}$  with $a_{ij}(y,t)$ as in \eqref{aij}, and right-hand sides given by
\begin{align*}
f_+(y,t) & = \partial_\tau^{1/2} a_{ij} (y,t) D_{ij}h^+(y,t-\tau) \quad\hbox{for}~(y,t)\in \overline{Q_{1/4}^+},\\
f_-(y,t) & = \partial_\tau^{1/2} a_{ij} (y,t) D_{ij}h^-(y,t-\tau) \quad\hbox{for}~(y,t)\in \overline{Q_{1/4}^-}.
\end{align*}
By \eqref{coef2}, we know that $a_{ij}$ is piecewise $C^{1,\alpha}$ in $Q_{1/2}$. In particular, for any $(y_0,t_0)\in {Q_{1/2}^\pm}$, there is a linear polynomial $l_{y_0,t_0}$ such that
\begin{equation} \label{aijest}
    \sup_{(y,t)\in Q_{r}(y_0,t_0)\cap  Q_{1/2}^\pm} |a_{ij}(y,t)-l_{y_0,t_0}(y)| \leq C r^{1+\alpha}.
\end{equation}
We claim that $\partial_\tau^{1/2} a_{ij}$
is piecewise $C^{0,\alpha}$ in $Q_{1/4}$ with uniform estimates in $\tau$, i.e.,
\begin{equation} \label{aijreg}
  \|\partial_\tau^{1/2}   a_{ij}\|_{C^\alpha(\overline{Q_{1/4}^+})}+ \|\partial_\tau^{1/2}   a_{ij}\|_{C^\alpha(\overline{Q_{1/4}^-})}\leq C
\end{equation}
for some $C>0$ independent of $\tau$. 
It is enough to prove the estimate on $\overline{Q_{1/4}^+}$.
By \eqref{timereg},
$$
\sup_{(y,t)\in Q_{1/4}^+}| \partial_\tau^{1/2} a_{ij}(y,t)| 
\leq \|\nabla b_{ij}\|_\infty \sup_{(y,t)\in Q_{1/4}^+} \frac{|\nabla h(y,t)- \nabla h(y,t-\tau)|}{\tau^{1/2}} \leq C\tau^{\alpha/2}\leq C.
$$
Next, we control the $\alpha$-H\"{o}lder seminorm. 
Let $(y_1,t_1),(y_2,t_2)\in \overline{Q_{1/4}^+}$ and set $$r= 2 d_p((y_1,t_1),(y_2,t_2)).$$ 
Without loss of generality, suppose that $t_1 > t_2$ and $r\geq 4 \tau^{1/2}$ (taking $\tau>0$ sufficiently small). By a standard covering argument, we may also assume that $r \leq 8 \tau^{1/2}$.
Then testing \eqref{aijest} at the points $(y_1, t_1),(y_1,t_1-\tau), (y_2,t_2),(y_2,t_2-\tau) \in Q_r(y_1,t_1)\cap Q_{1/2}^+$, we get
\begin{align*}
\big |\partial_\tau^{1/2} a_{ij} (y_1,t_1) - \partial_\tau^{1/2} a_{ij} (y_2,t_2) \big| 
& = 
 \frac{|a_{ij}(y_1,t_1) - a_{ij}(y_1,t_1-\tau) - a_{ij}(y_2,t_2) + a_{ij}(y_2,t_2-\tau)|}{\tau^{1/2}} \\
& \leq  \sum_{k=1}^2  \frac{|a_{ij}(y_k,t_k) - l_{y_1,t_1}(y_k)|}{\tau^{1/2}}
+   \frac{| a_{ij}(y_k,t_k-\tau) - l_{y_1,t_1}(y_k) |}{\tau^{1/2}} \\
&\leq 4C \frac{r^{1+\alpha}}{\tau^{1/2}} \leq  C_1 d_p((y_1,t_1), (y_2,t_2))^\alpha,
\end{align*}
where $C_1>0$ does not depend on $\tau$. Hence, we proved \eqref{aijreg}. It follows that
$$
 \|f_+\|_{C^{0,\alpha}(\overline{Q_{1/4}^+})} \leq 4 \|\partial_\tau^{1/2} a_{ij} \|_{C^{0,\alpha}(\overline{Q_{1/4}^+})} \| D_{ij}h\|_{C^{0,\alpha}(\overline{Q_{1/4}^+})}\leq C.
$$
Analogously, we obtain a uniform $C^{0,\alpha}$ estimate for $f_-$. By
Theorem~\ref{thm:bootstrap}, we see that
$$
\partial_\tau^{1/2} h  \in C^{2,\alpha}(\overline{Q_{1/8}^+}) \cap C^{2,\alpha}(\overline{Q_{1/8}^-})
$$ 
with uniform $C^{2,\alpha}$ estimates in $\tau$. In particular, $\partial_\tau^{1/2} \nabla h$ is piecewise $C^{1,\alpha}$ and, arguing as before, we have $\partial_\tau^{1/2} \partial_\tau^{1/2} \nabla h$ is piecewise $C^{0,\alpha}$ uniformly in $\tau$. By compactness, we get that $D_t \nabla h$ is piecewise $C^{0,\alpha}$. We are now able to differentiate the equations in \eqref{TP} with respect to $t$, and apply once again Theorem~\ref{thm:bootstrap} to get 
$$
D_t h\in C^{2,\alpha}(\overline{Q_{1/16}^+}) \cap C^{2,\alpha}(\overline{Q_{1/16}^-}).
$$
Iterating this procedure, we get that $h|_\Gamma$ is locally smooth in time. 

Finally,  combining both bootstrap arguments (in space and in time), we conclude that the mixed derivatives of $h|_\Gamma$ are locally smooth. 
 \end{proof}

\section{{A counterexample to Lipschitz regularity}} \label{sec:counterexample}

A question raised in the introduction is: are solutions to our parabolic free transmission problem Lipschitz continuous in the parabolic sense? 
For convenience, we will now look at the problem in the form:
\begin{equation} \label{eq:main}
\partial_t c(u) - \Delta u=0, \quad c(s)=a_+ s^+-a_-s^-, \quad a_\pm>0, \quad a_+\neq a_-.
\end{equation}
By classical parabolic theory \cite{L}, thanks to its divergence form structure, we know that $u$ is locally $C^{0,\beta}$ for some $\beta \in (0,1).$ In this section, we will see that there exist special functions satisfying \eqref{eq:main} whose spacial gradient is not locally bounded, and thus, giving a negative answer to our initial question. 
More precisely, we consider homogeneous radial functions 
\begin{equation} \label{eq:selfsim}
    u(x,t) = (-t)^{\alpha/2} f\left(\frac{|x|}{\sqrt{-t}}\right) \qquad \hbox{for} \ (x,t) \in \R^n \times (-\infty,0),
\end{equation}
where $n\geq 3$, for some parameter $\alpha>0$ and self-similar profile $f: [0, \infty) \to \R$ such that  
\begin{equation}\label{eq:bounded}
s\mapsto s^{-\alpha} f(s)\quad \hbox{is bounded as}~s\to\infty. 
\end{equation}
Caffarelli and Stefanelli \cite[Theorem~1.1]{CS} established that locally bounded $\alpha$-homogeneous solutions to \eqref{eq:main}, with $a_+=1$ and $a_-=2$, exist only for some specific values of $\alpha$. Moreover, for the smallest possible value, $\alpha:=\alpha_1^-\in (0,2)$, we have that $f$ changes sign exactly one time, and
$$
u(0,t)= -(-t)^{\alpha_1^-/2} \quad \hbox{for}~t\in (-1,0).
$$
This implies that $u$ is not Lipschitz in time, but it could be Lipschitz in the parabolic sense, if $\alpha_1^-\geq 1$.
Note that, if we call $s=|x|/\sqrt{-t}$ the self-similar variable, then
\begin{equation*}
\nabla u(x,t) = s^{1-\alpha} f'(s) |x|^{2-\alpha} x
\end{equation*}
is locally bounded if $\alpha\geq 1.$ We will see that this bound for $\alpha$ is not true, in general, for arbitrary coefficients $a_\pm>0$.
By a scaling argument, we may assume that $a_+=1$ and $a_-:= \vep \in (0,1)$. 
Plugging in the expression \eqref{eq:selfsim} into the equation \eqref{eq:main}, we see that the profile $f$ satisfies the differential equations:
\begin{align} \label{eq:ODEs}
\begin{cases}
\displaystyle f''(s) + \left( \frac{n-1}{s} - \frac{s}{2} \right) f'(s) + \frac{\alpha}{2} f(s) =0, & \hbox{where}~f>0, \ s>0,\\[0.5cm]
\displaystyle f''(s) + \left( \frac{n-1}{s} - \frac{\vep s}{2} \right) f'(s) + \frac{\vep \alpha}{2} f(s) =0, & \hbox{where}~f<0, \ s>0.
\end{cases}
\end{align}
Moreover, we prescribe the boundary conditions:
\begin{equation}\label{eq:bc}
 f(0)=1 \quad  \hbox{and} \quad f'(0)=0.
\end{equation} 
To solve the ODEs, we consider the \textit{confluent hypergeometric functions} $M(a,b,z)$ (Kummer function) and $U(a,b,z)$ (Tricomi function) satisfying the differential equation \cite{T}:
$$
z g''(z) + (b-z)g'(z) - a g(z) =0, \quad z\in \R.
$$
The function $M(a,b,z)$ is well-defined for any $a\in \R$ and $b\in \R\setminus \Z_-$, and it is given by
$$
M(a,b,z)= \sum_{k=0}^\infty \frac{(a)_k }{(b)_k k!} z^k = 1+ \frac{a}{b} z + \frac{a(a+1)}{b(b+1)2!} z^2 + \cdots
$$
Hence, by definition, $M(a,b,0)=1$ for any $a\in \R$ and $b\in \R\setminus \Z_-$. Moreover, for any $a, b\in \R$, the function $U(a,b,z)$ is uniquely determined by the property: 
$$
z \mapsto z^a U(a,b,z)  \quad \hbox{is bounded as}~ z\to\infty.
$$ 
Notice that, by a simple change of variables, the functions
\begin{equation}\label{eq:kummer}
M\Big(-\frac{\alpha}{2}, \frac{n}{2}, \frac{s^2}{4}\Big) \quad \hbox{and} \quad
 U\Big(-\frac{\alpha}{2}, \frac{n}{2}, \frac{\vep s^2}{4}\Big)
 \end{equation}
 satisfy the first and second equations of \eqref{eq:ODEs}, respectively.

We need the following lemma regarding the zeros of $M$ and $U$ \cite{G,T}.

\begin{lem}[Properties of zeros]  \label{lem:zeros}
Fix $a \in [-1,0)$ and $b\geq1$.
\begin{enumerate}[$(i)$]
\item (Uniqueness). The functions $M(a,b,z)$ and $U(a,b,z)$ have a unique positive zero. 
\item (Monotonicity). Let $z_M(a,b)$ and $z_U(a,b)$ be the unique zeros from part $(i)$. Then $z_M(\cdot,b)$ is strictly increasing and $z_U(\cdot,b)$ is strictly decreasing with respect to $a$.
\end{enumerate}
\end{lem}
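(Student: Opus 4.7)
My plan is to deduce both claims from a Wronskian identity for the self-adjoint form of the confluent hypergeometric equation
\[
(z^b e^{-z} y')' = a\, z^{b-1} e^{-z}\, y,
\]
combined with the standard differentiation identities
\[
\tfrac{d}{dz} M(a,b,z) = \tfrac{a}{b}\, M(a{+}1, b{+}1, z), \qquad \tfrac{d}{dz} U(a,b,z) = -a\, U(a{+}1, b{+}1, z).
\]
Since $a \in [-1,0)$ gives $a+1 \in [0,1)$, both $M(a{+}1,b{+}1,\cdot)$ and $U(a{+}1,b{+}1,\cdot)$ are strictly positive on $(0,\infty)$: the former from the nonnegativity of the Taylor coefficients when the first argument is nonnegative, the latter from the integral representation (for $a'>0$) together with the identity $U(0,b',z) \equiv 1$. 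Hence $\partial_z M < 0$ and $\partial_z U > 0$ on $(0,\infty)$. Combined with the boundary data $M(a,b,0) = 1$, $M \to -\infty$ at infinity (via $M \sim \Gamma(b)\Gamma(a)^{-1} e^z z^{a-b}$, using $\Gamma(a)<0$ for $a \in (-1,0)$), $U < 0$ near $0^+$ (from $U \sim \Gamma(b-1)\Gamma(a)^{-1} z^{1-b}$ for $b>1$, its logarithmic analogue at $b=1$, or direct evaluation when $a=-1$), and $U \sim z^{-a} \to +\infty$ at infinity, strict monotonicity yields the unique positive zero in $(i)$.

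For $(ii)$, fix $a_1 < a_2$ in $[-1,0)$, let $y_i$ denote the chosen solution with parameter $a_i$, and consider the modified Wronskian
\[
\widetilde W(z) := z^b e^{-z}\bigl(y_1(z) y_2'(z) - y_1'(z) y_2(z)\bigr),
\]
which by the self-adjoint ODE satisfies the key identity
\[
\widetilde W'(z) = (a_2 - a_1)\, z^{b-1} e^{-z}\, y_1(z)\, y_2(z).
\]
In the $M$ case I would argue by contradiction, assuming $\zeta_2 := z_M(a_2,b) \leq \zeta_1 := z_M(a_1,b)$. Both $M(a_i,b,\cdot)$ are positive on $(0,\zeta_2)$, so $\widetilde W' > 0$ there; together with $\widetilde W(0^+) = 0$ (the factor $z^b$ with $b \geq 1$ absorbs the bounded Wronskian of $M$ at the origin), this forces $\widetilde W(\zeta_2) > 0$. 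Direct evaluation at $\zeta_2$ using $M(a_2,b,\zeta_2) = 0$ yields $\widetilde W(\zeta_2) = \zeta_2^b e^{-\zeta_2} M(a_1,b,\zeta_2)\, \partial_z M(a_2,b,\zeta_2) \leq 0$, since $M(a_1,b,\zeta_2) \geq 0$ and $\partial_z M(a_2,b,\zeta_2) < 0$; contradiction.

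In the $U$ case the singular behavior of $U$ at $z=0$ makes $\widetilde W(0^+)$ delicate, so I would route the argument through $z \to \infty$ instead. Assuming $\zeta_2 := z_U(a_2,b) \geq \zeta_1 := z_U(a_1,b)$, evaluation at $\zeta_2$ gives $\widetilde W(\zeta_2) = \zeta_2^b e^{-\zeta_2} U(a_1,b,\zeta_2)\, \partial_z U(a_2,b,\zeta_2) \geq 0$. On $(\zeta_2,\infty)$ both $U(a_i,b,\cdot) > 0$, so $\widetilde W' > 0$ and $\widetilde W$ is strictly increasing there. However the asymptotic $U(a_i,b,z) \sim z^{-a_i}$ produces
\[
\widetilde W(z) \sim (a_1 - a_2)\, z^{b - a_1 - a_2 - 1}\, e^{-z} \to 0^- \quad \text{as } z \to \infty,
\]
which is incompatible with $\widetilde W$ being strictly increasing from a nonnegative value on $(\zeta_2,\infty)$. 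The main technical obstacle is precisely this careful boundary analysis of $\widetilde W$, together with the degenerate endpoints $a = -1$ (where $M$ and $U$ become linear polynomials sharing the zero $z = b$) and $b = 1$ (where the expansion of $U$ near $0$ acquires a logarithm), each of which I would verify by direct inspection.
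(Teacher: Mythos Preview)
The paper does not prove this lemma at all: it is stated with the citation \cite{G,T} (Gatteschi and Tricomi) and used as a black box. Your proposal therefore supplies strictly more than the paper does, and the Sturm-type Wronskian comparison you outline is correct and is essentially the classical argument behind those references. The monotonicity of $M$ and $U$ via the contiguous-derivative identities, the boundary/asymptotic analysis giving existence of a unique zero, and the sign analysis of $\widetilde W$ at the relevant endpoint (the origin for $M$, infinity for $U$) all check out; the degenerate cases $a=-1$ and $b=1$ that you flag are handled by the direct computations you indicate.
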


For $n\geq 3$, $\alpha\in (0,2]$, and  $\vep\in (0,1)$ fixed, let 
$$\bar z_M(\alpha, n) \quad \hbox{and}   \quad  \bar z_U^\vep (\alpha, n)$$ 
be the unique positive zeros of the functions $M$ and $U$ given in \eqref{eq:kummer}.
By Lemma~\ref{lem:zeros} $(ii)$, with $a=-\alpha/2$ and $b=n/2$, it follows that
\begin{equation} \label{eq:monotonicity}
\bar z_M(\alpha,n) \nearrow \infty\quad \hbox{and} \quad  \bar z_U^\vep (\alpha,n)\searrow 0 \qquad \hbox{as } \alpha \searrow 0.
\end{equation}
Indeed, by definition of $M$, we have that $M(0,n/2,s^2/4)=1$ for any $s\geq0$. Hence, $M$ does not have any zeros when $\alpha=0$. By continuity of $M$ with respect to $\alpha$ and the monotonicity of $\bar z_M(\cdot, n)$ we get that $\bar z_M(\alpha,n) \nearrow \infty$ as $\alpha \searrow 0.$ A similar reasoning shows that $\bar z_U^\vep (\alpha,n)\searrow 0$.

We observe that when $\alpha=2$, these functions are polynomials of degree 2:
$$
M\Big(-1, \frac{n}{2}, \frac{s^2}{4}\Big) = -\frac{s^2}{2n} +1 \quad \hbox{and} \quad  
U\Big(-1, \frac{n}{2}, \frac{\vep s^2}{4}\Big) = \frac{\vep s^2}{4} -\frac{n}{2}.
$$
In this case,  we have that
\begin{equation}\label{eq:zeros}
\bar z_M(2,n) = \sqrt{2n} < \sqrt{2n/\vep} = \bar z_U^\vep(2,n).
\end{equation}
 By \eqref{eq:monotonicity} and \eqref{eq:zeros}, we see that for $n\geq 3$ and $\vep\in (0,1)$ fixed, there is some $\alpha\in (0,2)$ such that $\bar z_M(\alpha, n)=\bar z_U^\vep(\alpha, n)=:s_{\vep} \in (\sqrt{2n}, \sqrt{2n/\vep})$, and the solution $f(s)$ to \eqref{eq:ODEs} satisfying \eqref{eq:bounded} and \eqref{eq:bc} is given explicitly by
 \begin{align}\label{eq:profilef}
f(s)= \left\{
\begin{array}{cl}
\displaystyle M\Big(-\frac{\alpha}{2}, \frac{n}{2}, \frac{s^2}{4}\Big) & \hbox{if}~s\in [0, s_\vep],\\[0.3cm]
\displaystyle - U\Big(-\frac{\alpha}{2}, \frac{n}{2}, \frac{\vep s^2}{4}\Big) & \hbox{if}~ s \in (s_\vep, \infty).\\
\end{array}\right.
\end{align}
\medskip

Furthermore, the following theorem shows that for $\vep$ sufficiently small, we have $\alpha<1$.

\begin{thm} \label{thm:counterxample}
   For all $n\geq 3$, there is some $\vep_0=\vep_0(n)<1$ such that if $\vep \in(0, \vep_0)$, then there exists $\alpha=\alpha(n,\vep)<1$ such that
    $
    \bar z_M(\alpha, n) = \bar z_U^\vep (\alpha, n).
    $
    Moreover, $\alpha\to 0$ as $\vep\to 0$.
\end{thm}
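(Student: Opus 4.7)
The plan is to use the opposing monotonicities of $\bar z_M(\alpha, n)$ and $\bar z_U^\vep(\alpha, n)$ in $\alpha$ from Lemma~\ref{lem:zeros}(ii), combined with the explicit scaling $\bar z_U^\vep(\alpha, n) = \vep^{-1/2}\, \bar z_U^1(\alpha, n)$, which is immediate from the change of variable $z = \vep s^2/4$ in the defining relation $U(-\alpha/2, n/2, \vep s^2/4)=0$. By \eqref{eq:monotonicity} and Lemma~\ref{lem:zeros}, $\alpha \mapsto \bar z_M(\alpha, n)$ is continuous and strictly decreasing on $(0, 2]$ from $+\infty$ down to $\sqrt{2n}$, while $\alpha \mapsto \bar z_U^\vep(\alpha, n)$ is continuous and strictly increasing on $(0, 2]$ from $0$ up to $\sqrt{2n/\vep}$. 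Consequently, the difference $\bar z_M(\alpha, n) - \bar z_U^\vep(\alpha, n)$ is strictly decreasing in $\alpha$, and the unique crossing point $\alpha(n, \vep) \in (0, 2)$ satisfies $\alpha(n, \vep) < 1$ if and only if $\bar z_M(1, n) < \bar z_U^\vep(1, n)$.

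To verify this inequality for small $\vep$, note that both $\bar z_M(1, n)$ and $\bar z_U^1(1, n)$ are fixed positive constants depending only on $n$: indeed, by Lemma~\ref{lem:zeros}(i) applied with $a = -1/2 \in [-1, 0)$ and $b = n/2 \geq 3/2$, the functions $M(-1/2, n/2, \cdot)$ and $U(-1/2, n/2, \cdot)$ each admit a unique positive zero. Since $\bar z_U^\vep(1, n) = \vep^{-1/2}\, \bar z_U^1(1, n) \to +\infty$ as $\vep \to 0$, one can simply set
\[
\vep_0(n) \;:=\; \tfrac{1}{2}\min\!\Big(1,\; \bar z_U^1(1, n)^2 / \bar z_M(1, n)^2 \Big),
\]
which lies in $(0, 1)$ and ensures $\bar z_M(1, n) < \bar z_U^\vep(1, n)$, hence $\alpha(n, \vep) < 1$, for every $\vep \in (0, \vep_0(n))$.

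For the final claim $\alpha(n, \vep) \to 0$ as $\vep \to 0$, I would argue by contradiction: if some sequence $\vep_k \to 0$ satisfied $\alpha(n, \vep_k) \geq \alpha_0 > 0$, then by the monotonicities established above,
\[
\bar z_M(\alpha(n, \vep_k), n) \;\leq\; \bar z_M(\alpha_0, n) \;<\; \infty,
\]
while, using the scaling identity together with monotonicity of $\bar z_U^\vep$ in $\alpha$,
\[
\bar z_U^{\vep_k}(\alpha(n, \vep_k), n) \;\geq\; \bar z_U^{\vep_k}(\alpha_0, n) \;=\; \vep_k^{-1/2}\, \bar z_U^1(\alpha_0, n) \;\longrightarrow\; +\infty,
\]
contradicting the defining equality $\bar z_M = \bar z_U^{\vep_k}$ at $\alpha(n, \vep_k)$. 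I do not anticipate any serious obstacle; the only points requiring care are transferring the monotonicity of the zeros of $M$ and $U$ (in the parameter $a$, with $b$ fixed) into the $s$-variable via the changes of variable $z = s^2/4$ and $z = \vep s^2/4$, and checking continuity of $\bar z_M(\cdot, n)$ and $\bar z_U^\vep(\cdot, n)$ in $\alpha$, which follows from standard implicit function arguments applied to the analytic families $M$ and $U$.
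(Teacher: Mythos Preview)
Your proposal is correct and follows essentially the same approach as the paper: both arguments exploit the scaling identity $\bar z_U^\vep(\alpha,n)=\vep^{-1/2}\bar z_U^1(\alpha,n)$, the opposing monotonicities from Lemma~\ref{lem:zeros}(ii), and reduce the condition $\alpha(n,\vep)<1$ to the inequality $\bar z_M(1,n)<\bar z_U^\vep(1,n)$, then define $\vep_0$ accordingly. The only cosmetic differences are your extra factor of $1/2$ in $\vep_0$ and your contradiction argument for $\alpha\to 0$ in place of the paper's direct limit argument; both are equally valid.
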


\begin{proof}
Let $n\geq 3$ and $\vep\in (0,1)$. Note that if $\bar z_U (\alpha, n)$ is the unique positive solution of
$$
U\Big(-\frac{\alpha}{2}, \frac{n}{2}, \frac{s^2}{4} \Big) =0,
$$
then $\bar z_U (\alpha, n) = \sqrt{\vep} \bar z_U^\vep (\alpha, n)$.
Moreover, by \eqref{eq:zeros} and the monotonicity in \eqref{eq:monotonicity} with $\vep=1$, we have 
$$
 \bar z_M (1, n)> \bar z_M(2,n)=\bar z_U(2,n)> \bar z_U (1, n).
$$
Define $\vep_0=\vep_0(n) <1$ as 
$${\vep_0} = \left(\frac{\bar z_U(1,n)}{\bar z_M(1,n)}\right)^2.$$ 
Then for any $\vep \in (0,\vep_0)$, we have 
$$
\bar z_M (1, n) = \frac{\bar z_U(1,n)}{\sqrt{\vep_0}} < \frac{\bar z_U(1,n)}{\sqrt{\vep}}=\bar z_U^\vep(1,n).
$$
Using again the monotonicity, we conclude that there exists some $\alpha=\alpha(n,\vep)<1$ such that
$$
    \bar z_M(\alpha, n) = \bar z_U^\vep (\alpha, n).
$$
Moreover, it follows that
$$
\frac{\bar z_U (\alpha, n)}{\bar z_M(\alpha, n)} = \sqrt{\vep} \longrightarrow 0 \qquad \hbox{as}~\vep \to 0.
$$
Therefore, for any $n\geq3$ fixed, 
$$
\bar z_U (\alpha, n)\longrightarrow 0 \quad \hbox{or} \quad \bar z_M(\alpha, n)\longrightarrow \infty\qquad \hbox{as}~ \vep\to 0. 
$$
By \eqref{eq:monotonicity}, we must have that $\alpha\to 0$ as $\vep\to0$.
\end{proof}

We illustrate the existence of such $\alpha<1$ for $n=3$ and $n=10$ in Figure~\ref{fig:plots}.
\begin{figure}[h]
\includegraphics[scale=0.6]{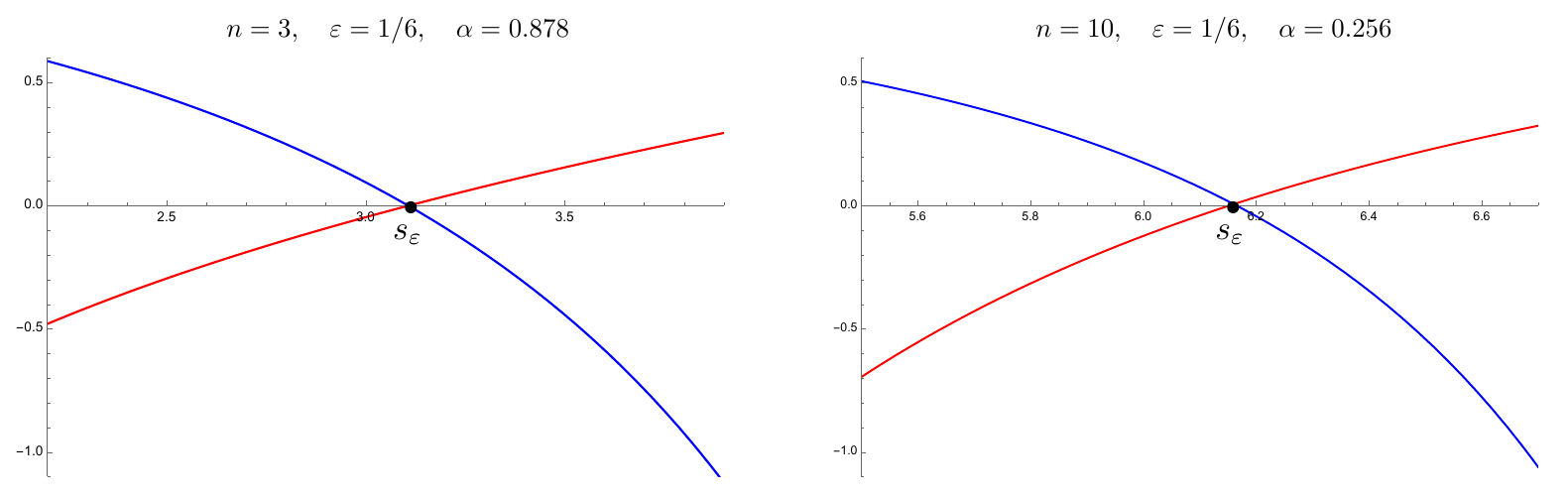}
\caption{Graphs of $M$ (blue) and $U$ (red) intersecting at the unique zero $s_\vep$.}
\label{fig:plots}
\end{figure}

We conclude from Theorem~\ref{thm:counterxample} that for $n\geq 3$, there is some coefficient $\vep\in (0,1)$ such that the solution $u$ to \eqref{eq:main} with $a_+=1$ and $a_-=\vep$, defined in \eqref{eq:selfsim} and \eqref{eq:profilef} for some $\alpha <1$, is not Lipschitz continuous, and thus, we obtain the desired counterexample.

\section{Appendix} \label{sec:appendix}

Given two nonempty sets $X, Y \subset \R^{m}$, the \textit{Hausdorff distance} is defined as
 $$
 d_{\h}(X,Y) = \inf \big \{\vep >0 : X \subset B_\vep(Y) \ \hbox{and}\ Y \subset B_\vep(X) \big\},
$$
where $B_\vep (Z) = \{ x \in \R^m : d(x,Z) \leq \vep\}$ and $d(x,Z)=\inf_{z\in Z} |x-z|$. 

Let $(M,d)$ be a metric space and $\mathcal{K}(M)$ be the set of all nonempty compact subsets of $M$. Then $(\mathcal{K}(M), d_{\h})$ is a metric space. Moreover,  if $M$ is compact, then $\mathcal{K}(M)$ is also compact.

\begin{lem} \label{lem:appendix}
Let $\{v_k\}_{k\geq 1}$ be a sequence of continuous real functions on $\overline{B_1}\subset \Rn$ satisfying:
\begin{enumerate}[$(i)$]
\item $|v_k|\leq 1$ on $\overline{B_1}$ for all $k\geq 1$;
\item There is a sequence $\delta_k\to 0$ and a modulus of continuity $\omega: [0,\infty)\to[0,\infty)$ such that 
$$
|v_k(x)-v_k(y)|\leq \omega(|x-y|)
$$
for all $x,y\in \overline{B_1}$ with $|x-y|\geq \delta_k$.
\end{enumerate} 
For each $k\geq 1$, let $G(v_k) =\{ (x,p) : x\in \overline{B_1}, \ p= v_k(x)\}$ be the graph of $v_k$ on $\overline{B_1}$. Then there exists a continuous function $v: \overline{B_1} \to [-1,1]$ such that
$$
d_{\h} (G(v_k), G(v)) \to 0 \quad \hbox{as}~ k \to \infty.
$$
Moreover, for all $x,y\in \overline{B_1}$, we have
$$
|v(x)-v(y)|\leq \omega(|x-y|).
$$

\end{lem}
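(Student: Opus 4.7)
The plan is to use the compactness of $\mathcal{K}(\overline{B_1}\times[-1,1])$ under the Hausdorff distance to extract a limit of (a subsequence of) $\{G(v_k)\}$, show that this limit is itself the graph of a function $v$, and then verify that $v$ inherits the modulus $\omega$. The central difficulty lies in the fact that hypothesis (ii) gives only a ``mesoscale'' modulus of continuity: it controls oscillations at scales above $\delta_k$, but says nothing at scales below $\delta_k$. A short auxiliary-point argument will bridge this gap.

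Since $\overline{B_1}\times[-1,1]$ is a compact metric space, the Blaschke selection theorem ensures that $(\mathcal{K}(\overline{B_1}\times[-1,1]),d_{\h})$ is itself compact. As each $G(v_k)$ is a nonempty compact subset of $\overline{B_1}\times[-1,1]$, after passing to a subsequence (still indexed by $k$) I obtain a compact set $K \subset \overline{B_1}\times[-1,1]$ with $d_{\h}(G(v_k),K)\to 0$. The next step is to show that $K$ is the graph of a function $v:\overline{B_1}\to[-1,1]$. For each $x\in\overline{B_1}$, the sequence $(x,v_k(x))\in G(v_k)$ has, up to a further subsequence, a limit $(x,p)\in K$, so the projection of $K$ onto $\overline{B_1}$ is surjective. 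The key point is uniqueness: if $(x,p_1),(x,p_2)\in K$, then there exist $(x_k^i,v_k(x_k^i))\in G(v_k)$ with $x_k^i\to x$ and $v_k(x_k^i)\to p_i$ for $i=1,2$. Since possibly $|x_k^1-x_k^2|<\delta_k$, I cannot apply (ii) directly; instead, I insert an auxiliary point $z_k\in\overline{B_1}$ with $|x_k^1-z_k|=2\delta_k$ (which exists for $k$ large because $\delta_k\to 0$ and $\overline{B_1}$ has nonempty interior). For $k$ large enough that $|x_k^1-x_k^2|<\delta_k$, also $|z_k-x_k^2|\geq 2\delta_k-\delta_k=\delta_k$, so (ii) yields
\[
|v_k(x_k^1)-v_k(x_k^2)| \leq |v_k(x_k^1)-v_k(z_k)| + |v_k(z_k)-v_k(x_k^2)| \leq 2\omega(3\delta_k)\to 0,
\]
using monotonicity of $\omega$ and $\omega(0^+)=0$. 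Hence $p_1=p_2$, and I may define $v(x)$ to be this common value.

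Finally, I would verify the modulus of continuity for $v$. Given distinct $x,y\in\overline{B_1}$, Hausdorff convergence of $G(v_k)$ to $G(v)$ produces sequences $x_k\to x$, $y_k\to y$ with $v_k(x_k)\to v(x)$ and $v_k(y_k)\to v(y)$. For $k$ large, $|x_k-y_k|>\delta_k$, so by (ii),
\[
|v_k(x_k)-v_k(y_k)|\leq \omega(|x_k-y_k|);
\]
passing to the limit (using continuity of $\omega$ at $|x-y|$, or just monotonicity together with $\omega(0^+)=0$) gives $|v(x)-v(y)|\leq\omega(|x-y|)$, which also forces $v$ to be continuous, with $|v|\leq 1$ from (i). The main obstacle is the uniqueness step: the hypothesis provides no control at scales below $\delta_k$, and the proof relies on the observation that for any two sequences $x_k^1,x_k^2$ collapsing to the same point, one can always insert a third point $z_k$ at the critical scale $2\delta_k$ so as to replace a single sub-threshold jump by two above-threshold jumps, each of which is controlled by $\omega$.
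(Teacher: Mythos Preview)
Your proposal is correct and follows essentially the same approach as the paper: Blaschke selection on $\mathcal{K}(\overline{B_1}\times[-1,1])$ to extract a Hausdorff limit $K$, the auxiliary-point trick at scale $\sim\delta_k$ to show $K$ is a graph, and then passage to the limit for the modulus. The only minor difference is that the paper treats the uniqueness step by an explicit dichotomy (either $|x_k^1-x_k^2|\geq\delta_k$ infinitely often, in which case (ii) applies directly, or not, in which case the auxiliary point is used), whereas your phrasing ``for $k$ large enough that $|x_k^1-x_k^2|<\delta_k$'' leaves the first case implicit; you should state that case separately, since $|x_k^1-x_k^2|<\delta_k$ need not hold eventually.
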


\begin{proof}
By the continuity of $v_k$ and $(i)$, we have that $G_k\equiv G(v_k)$ is a compact subset of $\overline{B_2}$. Since $\mathcal{K}(\overline{B_2})$ is a compact metric space with the Hausdorff distance, there exists a compact set $K\subset \overline{B_2}$ such that
\begin{equation}\label{convhauss}
d_{\h}(G_k, K)\to 0 \quad\hbox{as}~ k \to \infty,
\end{equation}
up to a subsequence.
Next, we show that $K$ must be the graph of some function. If not, there exist two points $(x,p), (x,q) \in K$ with $p\neq q$. By \eqref{convhauss} for all $\vep>0$, there is $k_0\geq 1$ such that $(x,p) \in B_\vep(G_k)$, for all $k\geq k_0$, i.e.,
$d((x,p), G_k) \leq \vep$ for all  $k\geq k_0.$
In particular, since $G_k$ is compact, the distance is attained at some point $(x_k, p_k)\in G_k$. Hence,
\begin{equation*}
(x_k, p_k) \to (x,p) \quad\ \hbox{as}~ k \to \infty.
\end{equation*}
Similarly, there are points $(y_k, q_k)\in G_k$ such that
$$
(y_k, q_k) \to (x,q) \quad\ \hbox{as}~ k \to \infty.
$$
We distinguish two cases: If $|x_k-y_k|\geq \delta_k$ for infinitely many $k$'s, then by $(ii)$, we get
\begin{align*}
|p_k-q_k| &=  |v_k(x_k)-v_k(y_k)| \leq  \omega (|x_k-y_k|).
\end{align*}
Letting $k\to\infty$, we get that $p=q$, which is a contradiction.
Otherwise, there is some $z_k\in \overline{B_1}$ such that $\delta_k \leq |x_k-z_k|\leq 3\delta_k$ and $\delta_k \leq |y_k - z_k|\leq 3\delta_k$.
Then
\begin{align*}
|p_k-q_k| &\leq  |v_k(x_k)-v_k(z_k)| +  |v_k(z_k)-v_k(y_k)| \\
&\leq  \omega(|x_k-z_k|) + \omega(|z_k-y_k|)\\
&\leq 2 \omega (3\delta_k).
\end{align*}
Letting $k\to\infty$, we get the contradiction.

Observe that for any $x\in \overline{B_1}$, there is some $p\in [-1,1]$ such that $(x,p)\in K$. Indeed, let $x\in \overline{B_1}$ and consider $(x,v_k(x))\in G_k$. By \eqref{convhauss}, for all $\vep>0$, there is $k_0\geq 1$ such that $(x,v_k(x)) \in B_\vep(K)$ for all $k\geq k_0$. Taking $\vep= 1/j$, by a diagonalization argument, we obtain that there is a subsequence of $\{(x,v_k(x))\}_{k\geq 1}$ converging to $(x,p)\in K$, where $|p|\leq 1$ by $(i)$.

We define the function $v: \overline{B_1}\to K$ as $v(x)=p$.  Let $x, y\in \overline{B_1}$ with $x\neq y$. Then
$$
|v(x)-v(y)| = \lim_{k\to\infty} |v_k(x) - v_k(y)| \leq \omega(|x-y|),
$$
where we used that $|x-y|\geq \delta_k>0$ for $k$ large enough.
\end{proof}


\end{document}